\documentclass[times,final,11pt,compress]{elsarticle}

\usepackage{jcomp}

\usepackage{caption, subcaption}%

\usepackage{amsmath}
\usepackage{amssymb}
\usepackage{amsfonts}
\usepackage{amsthm}
\usepackage{booktabs}
\usepackage{mathrsfs}
\usepackage{epsfig}
\usepackage{enumerate}
\usepackage{enumitem}
\usepackage{extarrows}
\usepackage{flafter}
\usepackage{float} 
\usepackage{graphicx,wrapfig}
\usepackage{hyperref}
\usepackage{leftidx}
\usepackage{lscape}
\usepackage{multicol}
\usepackage{multirow}
\usepackage{pgfplots}
\usepackage{setspace}
\usepackage{tikz}
\usepackage{xcolor}
\definecolor{mygreen}{rgb}{0,0.45,0}
\usetikzlibrary{arrows,shapes,chains}
\usetikzlibrary{graphs, positioning, quotes, shapes.geometric}

\usepackage{bm}

\allowdisplaybreaks

\newtheorem{theorem}{Theorem}[section]
\newtheorem{proposition}{Proposition}[section]
\newtheorem{lemma}{Lemma}[section]

\newtheorem{remark}{Remark}

\theoremstyle{remark}
\newtheorem{example}{\bf Example}[]

\begin{document}
	
	\begin{frontmatter}
		\title{{\bf GQL-Based Physical-Constraint-Preserving High-Order Finite Difference Schemes for Special Relativistic Hydrodynamics in Arbitrary Dimensions}}
		\author[1]{Linfeng Xu}
		\ead{xulf2022@mail.sustech.edu.cn}
		\author[2]{Shengrong Ding}
		\ead{dingshr7@mail.sysu.edu.cn}
		\author[1,3]{Kailiang Wu\corref{cor1}}
		\ead{wukl@sustech.edu.cn}
		
		\address[1]{Department of Mathematics, Southern University of Science and Technology, Shenzhen, Guangdong, 518055, China}
		\address[2]{School of Science, Sun Yat-Sen University, Shenzhen, Guangdong, 518107, China}
                \address[3]{Shenzhen International Center for Mathematics, Southern University of Science and Technology, Shenzhen, Guangdong, 518055, China}

		\cortext[cor1]{Corresponding author.}

\begin{abstract}
	High-order accurate simulations of special relativistic hydrodynamics (RHD) are prone to numerical breakdown if intrinsic physical constraints (positive rest-mass density/pressure and subluminal velocity) are violated near strong discontinuities. In this work, we develop a robust and efficient physical-constraint-preserving (PCP) flux-limiting framework for high-order schemes, using finite-difference WENO as a representative example. By leveraging the geometric quasilinearization (GQL) representation, which equivalently reformulates the nonlinear RHD constraints into a family of linear inequalities, we integrate a Zalesak-type Flux-Corrected Transport (FCT) update into a scalar-style limiter that acts directly on conservative variables.	A critical innovation is the explicit, non-iterative determination of limiting parameters via a rational stereographic parameterization of the GQL normal vector. This technique transforms the required worst-case minimization over auxiliary variables into a generalized Rayleigh-quotient formulation, allowing the optimal parameters to be obtained by solving small symmetric eigenvalue problems ($2\times2$ in 1D; $(d+1)\times(d+1)$ in $d$ dimensions). Relaxed variants are further introduced to reduce computational costs in multidimensions while retaining the PCP guarantee. Extensive numerical benchmarks ranging from 1D to 3D, including ultra-relativistic Riemann problems and astrophysical jets, demonstrate that the proposed method robustly enforces physical admissibility, sharply resolves discontinuities, and maintains design-order accuracy for smooth solutions.
\end{abstract}

\begin{keyword}
\textbf{Keywords:}	special relativistic hydrodynamics \sep physical-constraints-preserving \sep flux-corrected transport \sep geometric quasilinearization (GQL) \sep high-order accuracy \sep WENO. 
\end{keyword}
		
		
	\end{frontmatter}
	
	\singlespacing

	
\section{Introduction}
\setcounter{equation}{0}

Relativistic hydrodynamics (RHD) provides a fundamental description of fluid motion at velocities approaching the speed of light and plays a central role in astrophysics, high-energy physics, and cosmology. 
The governing equations feature strong nonlinearities and, in many practically relevant regimes, extremely large Lorentz factors; as a result, analytical solutions are rare and high-fidelity numerical simulation becomes indispensable for elucidating the dynamics of relativistic flows. 
Compared with the nonrelativistic Euler equations, RHD exhibits a more intricate and nonlinearly coupled relationship between conservative and primitive variables, which poses additional challenges for designing robust, accurate, and efficient numerical methods.

Early numerical investigations date back to \cite{may1966hydrodynamic,wilson1972numerical}, where finite-difference discretizations augmented by artificial viscosity were employed in Lagrangian or Eulerian coordinates. Over the past decades, a wide range of high-resolution and high-order methods have been developed for RHD, including finite-volume, finite-difference, and discontinuous Galerkin (DG) approaches; see, e.g., \cite{chen2021second, radice2012thc, WuTang2015, radice2011discontinuous, zhao2013runge}. To further enhance the resolution of discontinuities and multiscale structures, adaptive mesh refinement and adaptive moving-mesh techniques have also been incorporated into RHD simulations \cite{2006Zhang,he2012adaptive1}.

A central difficulty in RHD computations is the strict enforcement of intrinsic physical constraints: positivity of the rest-mass density and pressure, and the subluminal constraint on the fluid velocity (i.e., $|{\bm v}|<1$ with the light speed $c=1$). 
These constraints define the admissible state set of the system. 
Numerical violations (negative density/pressure or superluminal velocity) are not only nonphysical but may also cause immediate breakdown of the computation, particularly in extreme regimes characterized by large Lorentz factors, near-vacuum conditions, or strong shocks. 
In practice, such failures are sometimes handled in an ad hoc manner by restarting with more diffusive schemes and/or smaller CFL numbers until admissibility is recovered \cite{2006Zhang, hughes2002three}. 
This lack of rigor and predictability motivates the development of high-order methods that guarantee admissibility by construction, commonly referred to as physical-constraints-preserving (PCP) or, more generally, invariant-domain-preserving (IDP) schemes.

In the broader setting of hyperbolic conservation laws, the design of provably PCP/IDP schemes has advanced substantially, with two major limiter paradigms emerging. 
The first class consists of scaling-type limiters, pioneered by Zhang and Shu, and successfully applied to scalar conservation laws \cite{zhang2010}, the nonrelativistic Euler equations \cite{zhang2012positivity, zhang2010b}, and the compressible Navier--Stokes equations \cite{ZHANG2017301}. 
The second class enforces admissibility through flux correction, i.e., nonlinear blending of high-order and low-order numerical fluxes. 
Representative examples include parametrized maximum-principle/positivity-preserving flux limiters for scalar laws \cite{Xu2014, Liang2014}, convection-dominated diffusion \cite{jiang2013parametrized}, and the compressible Euler equations \cite{Xiong2016}. 
In addition, Hu, Adams, and Shu \cite{Hu2013} proposed a sufficient positivity-preserving flux limiting condition for the Euler equations via a convex decomposition of cell averages, an approach later generalized to special RHD by Wu and Tang \cite{WuTang2015}.

The flux-correction philosophy can be traced back to the Flux-Corrected Transport (FCT) method of Boris and Book \cite{boris1973flux, book1975flux, boris1976flux}. 
Zalesak extended FCT to fully multidimensional settings \cite{zalesak1979fully} and developed a structured-grid limiter for scalar conservation laws \cite{zalesak2012design}, laying foundations for later IDP/PCP developments. 
Recent work has further clarified the close connection between Zalesak-type FCT limiters and parametrized flux limiters; see, e.g., \cite{Xu2014,wu2025high}. 
Kuzmin and collaborators systematically extended FCT to multidimensional and high-order discretizations \cite{kuzmin2001positive, kuzmin2004high} and proposed monolithic convex limiting techniques for hyperbolic systems \cite{kuzmin2020monolithic}. 
Other notable FCT-type variants include the Point-Average-Moment PolynomiAl-Interpreted (PAMPA) schemes \cite{abgrall2024bound, abgrall2025bound, abgrall2025novel}. 
Another influential line is the convex limiting framework of Guermond, Popov, and Tomas \cite{guermond2017invariant, guermond2018second, guermond2019invariant}, developed for discretization-independent IDP mechanisms and applied across continuous finite elements and other discretization frameworks. 
For comprehensive discussions of FCT/IDP principles and algorithms, we refer to \cite{kuzmin2012flux, wu2025high} and the references therein.

Applying PCP/IDP ideas to RHD is nontrivial. 
Although the equations are conservative, neither the fluxes nor the wave speeds admit simple closed-form expressions purely in terms of conservative variables, since they depend on primitive variables obtained through a nonlinear recovery procedure; see Section~\ref{sec: RHD}. 
Despite these difficulties, major progress has been achieved for special RHD. 
Wu and Tang \cite{WuTang2015} derived the first explicit equivalent characterization of the admissible state set, later generalized to general equations of state (EOS) \cite{WuTang2017ApJS}, and developed high-order PCP finite-difference WENO schemes with flux-corrected limiters. 
This foundation has supported extensions to unstructured finite-volume methods \cite{chen2022physical}, discontinuous Galerkin (DG) methods \cite{QinShu2016, WuTang2017ApJS}, and invariant-region/minimum-entropy-principle approaches \cite{WuMEP2021, cui2025local}. 
Related PCP frameworks have also been established for general relativistic hydrodynamics \cite{wu2017design} and coupled with the oscillation-eliminating approach \cite{peng2025oedg} in \cite{cao2025robust}, and PCP techniques have been extended to relativistic MHD \cite{WuTangM3AS, WuShu2020NumMath} as well as nonrelativistic MHD \cite{Wu2017a, WuShu2018, WuShu2019}. 
For further background, we refer to the reviews \cite{marti2003numerical, Marti2015}, the textbook \cite{rezzolla2013relativistic}, and recent studies \cite{WuShu2019SISC, marquina2019capturing, xu2024high}.

More recently, the geometric quasilinearization (GQL) framework was introduced in \cite{wu2023geometric}, inspired by PCP analyses for (relativistic) MHD and related systems \cite{WuTangM3AS, Wu2017a, WuShu2018, WuShu2019, WuShu2020NumMath}. 
GQL converts nonlinear admissibility constraints into equivalent families of linear inequalities parameterized by auxiliary variables, offering a geometric route to PCP design. 
However, for special RHD a key practical bottleneck remains: enforcing the nonlinear constraint $q({\bf U})>0$ (encoding positive pressure and subluminal velocity) in a way that is both sharp and efficient. 
Achieving high resolution requires the flux-corrected limiting parameter $\theta\in[0,1]$ to stay as close to $1$ as possible, while guaranteeing admissibility at every interface and Runge--Kutta stage. 
Even within the GQL framework \cite{wu2023geometric}, the limiting step naturally leads to a worst-case optimization over an auxiliary variable $v_{*}$; existing strategies often rely on discrete sampling or iterative root-finding, whose cost and implementation complexity grow rapidly in multiple space dimensions and may become overly restrictive when the solution approaches the boundary of the admissible set.

To address these issues, we propose a new, efficient PCP flux-limiting framework for special RHD that avoids iterative searches over auxiliary variables in the limiting step. 
Building on the GQL characterization and the classical Zalesak-type FCT mechanism, our main contributions are:
\begin{itemize}
	\item \textit{GQL--FCT scalar-style PCP limiting.} We integrate the GQL family of linear inequalities into an FCT update to construct a scalar-style limiter that acts directly on conservative variables. 
	In contrast to classical characteristic FCT limiters for hyperbolic systems \cite{zalesak1979fully,zalesak2012design}, our GQL-based PCP limiter does \emph{not} require characteristic decomposition in the limiting step. 
	Furthermore, unlike the FCT approach using GQL with a specific convex decomposition for PAMPA schemes \cite{abgrall2024bound}, the present limiter avoids such a prior convex decomposition; our limiting approach is thus applicable to other high-order numerical frameworks, such as finite difference and finite volume schemes.
	
	\item \textit{Eigenvalue-based parameter estimation.} We resolve the computational bottleneck associated with the nonlinear constraint. 
	By introducing a rational (stereographic-type) parameterization of the GQL normal vector, we transform the worst-case minimization over GQL auxiliary variables into a generalized Rayleigh-quotient maximization. 
	Consequently, the limiter parameters are obtained from only a few small \emph{symmetric eigenvalue problems} ($2\times2$ in 1D; $(d{+}1)\times(d{+}1)$ in $d$ dimensions), yielding an explicit, non-iterative procedure. 
	For 2D and 3D, we further propose relaxed variants that substantially reduce the number of eigenvalue evaluations while retaining robustness (from 16 to 6 in 2D and from 64 to 9 in 3D).
	
	\item \textit{High-order PCP WENO schemes and numerical validation.} We couple the new limiter with standard fifth-order finite-difference WENO reconstruction and strong-stability-preserving Runge--Kutta time integration. 
	A series of demanding one- to three-dimensional benchmarks, including ultra-relativistic Riemann problems, multidimensional Riemann problems, shock--bubble interaction, and relativistic jet simulations, demonstrate robustness and high-order accuracy. 
	Comparisons with the classical Wu--Tang PCP limiter \cite{WuTang2015} indicate that the new estimator provides better resolution.
\end{itemize}

From the viewpoint of invariant-domain enforcement, convex limiting and its monolithic variants provide a general mechanism to restrict high-order (anti-diffusive) corrections so that the updated degrees of freedom remain in a prescribed convex admissible set; see, e.g., the convex limiting framework of Guermond et al.~\cite{guermond2018second, guermond2019invariant} and the monolithic convex limiting approach of Kuzmin~\cite{kuzmin2020monolithic}. 
In contrast, we exploit the specific geometry of the special-RHD admissible set in conservative variables: besides $D>0$, the nonlinear constraint $q({\bm U})=E-\sqrt{D^2+|{\bm m}|^2}>0$ is represented \emph{exactly} as the intersection of linear half-space inequalities ${\bf U}\cdot{\bm n}({\bm v}_*)>0$ for all ${\bm v}_*\in \mathbb{B}_1({\bf 0})$. 
This makes it possible to embed constraint enforcement into a scalar-style Zalesak-type FCT update by applying lower-bound limiting to the projected quantities ${\bf U}\cdot{\bm n}({\bm v}_*)$, acting directly on conservative variables and avoiding any problem-specific convex decomposition of states in the limiting step.

The remainder of this paper is organized as follows. 
Section~\ref{sec: RHD} introduces the governing equations and the GQL characterization of the admissible state set. 
Section~\ref{Sec: FD Review} outlines the base finite-difference WENO discretization. 
Section~\ref{Sec: BP analysis} details the construction of the GQL--FCT PCP flux limiter, the eigenvalue-based parameter estimation, and the proof of the PCP property. 
Numerical validations are presented in Section~\ref{Sec: Results}, followed by conclusions in Section~\ref{sec:conclusion}.


\section{Relativistic Euler system and admissible state set} \label{sec: RHD}
The $d$-dimensional special relativistic hydrodynamics system can be formulated as a system of hyperbolic conservation laws:
\begin{equation}\label{eq:RHD3D}
	\frac{\partial {\bf U}}{\partial t} + \sum_{i=1}^{d} \frac{\partial {\bf F}_i({\bf U})}{\partial x_i} = {\bf 0},
\end{equation}
where the conservative vector ${\bf U}$ and flux function ${\bf F}_i$ are defined as
\begin{align} 
	&{\bf U} = 
	\left( D, {\bm m}^\top, E \right)^\top 
	= 
	\left( \rho \gamma, \rho h \gamma^2 {\bm v}^\top, \rho h \gamma^2 - p \right)^\top, 
	\label{eq:86} \\ 
	&{\bf F}_i = 
	\left( D v_i, v_i {\bm m}^\top + p {\bm e}_i^\top, m_i \right)^\top 
	= 
	\left( \rho \gamma v_i, \rho h \gamma^2 v_i {\bm v}^\top + p {\bm e}_i^\top, \rho h \gamma^2 v_i \right)^\top.
	\label{eq:93} 
\end{align}
Here, $D,\ {\bm m}$, and $E$ correspond to the mass density, momentum density vector, and energy density, respectively, and are collectively referred to as {\em conservative variables}. The rest-mass density $\rho$, fluid velocity ${\bm v} = (v_1, v_2, \dots, v_d)^\top$, and pressure $p$ are often termed {\em primitive variables}. The Lorentz factor 
$\gamma = (1 - |{\bm v}|^2)^{-\frac12},$ with ${|\bm v|}$ denoting the magnitude of the velocity vector, and ${\bm e}_i$ stands for the $i$-th column of the $d \times d$ identity matrix.

The RHD system \eqref{eq:RHD3D} is closed by an EOS, which relates the specific enthalpy $h$ to pressure $p$ and rest-mass density $\rho$. To ensure relativistic causality (local sound speed $c_s<1$), the EOS must satisfy the inequality \cite{WuTang2017ApJS}:
\begin{equation*}
    h\ \left(\frac{1}{\rho} - \frac{\partial h}{\partial p}(p,\rho)\right) < \frac{\partial h}{\partial \rho}(p,\rho) < 0.
\end{equation*}
In this work, we adopt the ideal EOS:
\begin{equation}\label{ID-EOS}
    h = 1 + \frac{\Gamma p}{(\Gamma -1) \rho} ,
\end{equation}
where $\Gamma \in (1,2]$ is the adiabatic index. 
The admissible state set of the RHD system \eqref{eq:RHD3D} with the ideal EOS \eqref{ID-EOS} is defined as:
\begin{equation}\label{eq:3constraints}
    \mathcal{G} = \left\{ {\bf U} = (D, {\bm m}^\top, E)^\top \in \mathbb{R}^{d+2} \mid \rho({\bf U}) > 0,\ p({\bf U}) > 0,\ {\bm v}({\bf U}) \in \mathbb{B}_1({\bf 0})\subseteq\mathbb{R}^d\right\},
\end{equation}
where $\mathbb{B}_1({\bf 0})$  denotes the $d$-dimensional unit ball centered at the origin. An explicit equivalent characterization of $\mathcal{G}$ was derived in \cite{WuTang2015}:
\begin{equation} \label{1stequivG}
    \mathcal{G}^{(1)} = \left\{ {\bf U} = (D, {\bm m}^\top, E)^\top \in \mathbb{R}^{d+2} \mid D > 0,\ q({\bf U}) := E - \sqrt{D^2 + |{\bm m}|^2} > 0 \right\},
\end{equation}
which was further linearized via the GQL framework \cite{wu2023geometric} as:
\begin{equation}\label{eq:1218}
    \mathcal{G}^{(2)}= \left\{ {\bf U} = (D,{\bm m}^\top,E)^\top \in \mathbb{R}^{d+2} \mid D > 0,\  {\bf U}\cdot {\bm n}_* > 0,\ \forall {\bm v_*} \in \mathbb{B}_1({\bf 0}) \right\},
\end{equation}
where ${\bm n}_*:= \big( -\sqrt{1-|{\bm v}_*|^2}, -{\bm v}_*^\top, 1 \big)^\top$, and ${\bm v}_*$ is the auxiliary parameter introduced in the GQL framework to linearize the constraints. As demonstrated in Section \ref{Sec: BP analysis}, this linear representation $\mathcal{G}^{(2)}$ offers substantial advantages over the original nonlinear form of $\mathcal{G}$ in PCP analysis for RHD.

Different from the nonrelativistic case, there are no explicit expressions for either the fluxes ${\bf F}_i$
or the primitive quantities in terms of the conservative variables ${\bf U}$ for RHD. In the computation, we have to first recover the primitive variables from the conservative ones before evaluating the fluxes. For a given conservative vector $\bm U=(D,\bm m,E)^\top\in\mathcal G$, we recover primitive variables $(\rho,\bm v,p)^\top$ for the ideal EOS \eqref{ID-EOS} by solving a nonlinear algebraic equation \cite{WuTang2015}
\begin{equation} \label{RHDCon2Pri}
\Phi_{\bf U}(p) := \frac{p}{\Gamma-1}-E+\frac{\|{\bm m}\|^2}{E+p}+D\sqrt{1-\frac{\|{\bm m}\|^2}{(E+p)^2}} = 0 \quad \text{for}\ \ p>0.
\end{equation}
Once $p$ is determined from \eqref{RHDCon2Pri}, the velocity vector ${\bm v}$ and rest-mass density $\rho$ are computed via:
\[
\bm v=\frac{\bm m}{E+p},\qquad \rho=D\sqrt{1-\|{\bm v}\|^2}.
\]
To solve \eqref{RHDCon2Pri} robustly, we suggest the provably convergent iterative algorithms for RHD primitive-variable recovery in \cite{chen2022physical, cai2024provably}.


\section{Review of finite difference WENO methods} \label{Sec: FD Review}
\setcounter{equation}{0}
The proposed GQL-based limiting framework is applicable to general high-order finite-difference and finite-volume schemes. {Since WENO methods are particularly popular, our numerical experiments use a high-order finite-difference classical WENO-JS scheme \cite{jiang1996efficient} with $\epsilon = 10^{-6}$ as a representative example to demonstrate the effectiveness of the framework; however, the same framework can be applied directly to other high-order discretizations as well, such as  WENO-Z \cite{borges2008improved} and monotonicity-preserving \cite{suresh1997accurate} schemes.}
For completeness, we briefly recall the standard finite-difference WENO discretization for one-dimensional systems of conservation laws.
Consider the $d=1$ case of \eqref{eq:RHD3D},
\begin{equation}\label{eq:RHD1D}
    {\bf U}_t + {\bf F}({\bf U})_x = {\bf 0},
\end{equation}
on a uniform grid $x_i=i\Delta x$.
A conservative flux-differencing semi-discretization takes the form
\begin{equation}\label{eq:semiWENO}
    \frac{d{\bf U}_i}{dt} = -\frac{1}{\Delta x}\Bigl(\widehat{\bf F}_{i+1/2}-\widehat{\bf F}_{i-1/2}\Bigr),
\end{equation}
where $\widehat{\bf F}_{i+1/2}$ is a high-order accurate numerical flux at the interface $x_{i+1/2}=(x_i+x_{i+1})/2$.

\paragraph{Global Lax--Friedrichs flux splitting}
We employ the global Lax--Friedrichs (LF) splitting
\begin{equation}\label{eq:LFsplit}
    {\bf F}^{\pm}({\bf U})=\frac12\Bigl({\bf F}({\bf U}) \pm \alpha{\bf U} \Bigr),
\end{equation}
where $\alpha$ is the globally defined maximum wave speed, that is, the spectral radius of the Jacobian ${\bf A}({\bf U})=\partial{\bf F}/\partial{\bf U}$ over all grid points, i.e.,
$\alpha = \max_j \rho\bigl({\bf A}({\bf U}_j)\bigr)$.
Then ${\bf F}({\bf U})={\bf F}^{+}({\bf U})+{\bf F}^{-}({\bf U})$, and the positive/negative parts can be reconstructed using upwind-biased stencils.

\paragraph{Characteristic projection and WENO5 reconstruction}
Let ${\bf L}_{i+1/2}$ and ${\bf R}_{i+1/2}={\bf L}_{i+1/2}^{-1}$ be the left/right eigenvector matrices of ${\bf A}({\bf U})$ evaluated at a suitable interface state (e.g., a local average of $\{{\bf U}_j\}$).
Define the characteristic split fluxes
\[
    {\bf W}_{j}^{\pm} := {\bf L}_{i+1/2}\,{\bf F}^{\pm}({\bf U}_j).
\]
For each component of ${\bf W}_j^{+}$, apply the standard fifth-order WENO reconstruction \cite{liu1994weighted,jiang1996efficient} with a \emph{left-biased} stencil to obtain the interface value $\widehat{\bf W}^{+}_{i+1/2}$.
Similarly, reconstruct ${\bf W}_j^{-}$ with a \emph{right-biased} stencil to obtain $\widehat{\bf W}^{-}_{i+1/2}$.
Transform back to physical space,
\[
    \widehat{\bf F}^{\pm}_{i+1/2} := {\bf R}_{i+1/2}\,\widehat{\bf W}^{\pm}_{i+1/2},
\qquad
    \widehat{\bf F}_{i+1/2} := \widehat{\bf F}^{+}_{i+1/2}+\widehat{\bf F}^{-}_{i+1/2}.
\]
In multiple spatial dimensions, the above procedure is applied dimension-by-dimension to each directional flux.

\paragraph{Time integration}
The semi-discrete system \eqref{eq:semiWENO} is advanced in time by the third-order SSP Runge--Kutta method.
The PCP flux limiting procedure developed in Section~\ref{Sec: BP analysis} is applied at each Runge--Kutta stage.


\section{A novel PCP flux limiter} \label{Sec: BP analysis}
\setcounter{equation}{0}

In this section, we will discuss the PCP high-order schemes for one-, two-, and three-dimensional RHD with the ideal EOS \eqref{ID-EOS}. The PCP property is achieved through our novel flux limiter, which is motivated by Zalesak's limiter in Flux-Corrected Transport (FCT) schemes for scalar partial differential equations \cite{zalesak1979fully,zalesak2012design,kuzmin2012flux}.

Recently, Wu and Shu proposed the Geometric Quasi Linearization (GQL) framework for PCP analysis, providing new insights into dealing with nonlinear hyperbolic systems, such as RHD \eqref{eq:RHD3D}. The GQL theory reformulates the nonlinear constraints into equivalent linear ones by introducing additional free parameters. As observed and verified in \eqref{eq:1218}, we can judge whether the solution lies in $\mathcal{G}$ by the sign of the {\em scalar} ${\bf U} \cdot {\bm n}_*$, which can be integrated with Zalesak's limiter in scalar equations for RHD, without transforming the system into its local characteristic space or seeking a technical convex decomposition for a sufficient PCP condition.

\subsection{One-dimensional case}
We consider the following finite difference FCT scheme for the case $d = 1$ in \eqref{eq:RHD3D}. 
{In what follows, we denote:
\begin{itemize}
    \item $\widehat{\mathcal{F}}_{i+1/2}^L$: first-order low-order numerical flux (see \eqref{eq:LF_loworder} for the detailed definition).
    \item $\widehat{\mathcal{F}}_{i+1/2}^H$: fifth-order characteristic WENO flux described in Section~\ref{Sec: FD Review}.
    \item $\widehat{\mathcal{F}}_{i+1/2}^A := \widehat{\mathcal{F}}_{i+1/2}^H - \widehat{\mathcal{F}}_{i+1/2}^L$: anti-diffusive numerical flux.
\end{itemize}}
For convenience, we omit the subscript of $x_1$:
\begin{equation} \label{uLF1DRHD}
    {\bf U}_i^L := {\bf U}_i^n - \frac{\Delta t}{\Delta x}\left(\widehat{\mathcal{F}}_{i+{1}/{2}}^L-\widehat{\mathcal{F}}_{i-{1}/{2}}^L\right),
\end{equation}
\begin{equation} \label{uPCP1D}
    {\bf U}_i^{n+1} = {\bf U}_i^L - \frac{\Delta t}{\Delta x}\left(\theta_{i+{1}/{2}}\widehat{\mathcal{F}}_{i+{1}/{2}}^A-\theta_{i-{1}/{2}}\widehat{\mathcal{F}}_{i-{1}/{2}}^A\right).
\end{equation}
In our implementation, the low-order flux is chosen as the first-order global LF (Rusanov) flux
\begin{equation}\label{eq:LF_loworder}
    \widehat{\mathcal{F}}_{i+{1}/{2}}^L=\frac12\Bigl({\bf F}({\bf U}_i^n)+{\bf F}({\bf U}_{i+1}^n)-\alpha\,({\bf U}_{i+1}^n-{\bf U}_i^n)\Bigr)
\end{equation}
with the same LF parameter $\alpha$ as in \eqref{eq:LFsplit}. (At each SSP-RK stage, ${\bf U}^n$ is replaced by the corresponding stage solution.) The LF scheme \eqref{uLF1DRHD} with flux \eqref{eq:LF_loworder} is known to preserve the admissible set for the RHD system under a suitable CFL condition; see, e.g., \cite{WuTang2015,Wu2017a}. Our goal is to find the explicit formula for the coefficients $\theta_{i+{1}/{2}}$. 

At each SSP-RK stage, we enforce the following $\epsilon$-admissible set in conservative variables:
\begin{equation}\label{eq:1219}
    \mathcal{G}_\epsilon^{(1)}=\left\{{\bf U}\in\mathbb{R}^3 \ \big|\ D({\bf U})\geq\epsilon,\ q({\bf U})=E-\sqrt{D^2+m^2}\geq\epsilon\right\}.
\end{equation}
We fix a small parameter $\epsilon>0$ such that ${\bf U}_i^L\in \mathcal{G}_\epsilon^{(1)}$ for all $i$.
In computation, we choose it stage-by-stage as
$\epsilon=\min\bigl\{10^{-13},\ D_i^L\bigr\}$ or $\epsilon=\min\bigl\{10^{-13},\ q({\bf U}_i^L)\bigr\}$ for each cell within the computational domain.
{
The $q$-constraint in \eqref{eq:1219} is nonlinear with respect to ${\bf U}$, which makes it difficult for the PCP analysis. We first find an equivalent linear form following the GQL framework \cite{WuTang2015,wu2023geometric}.
\begin{proposition}[Equivalent GQL representation of $\mathcal{G}_\epsilon^{(1)}$]\label{prop:GQL form}
    The $\epsilon$-admissible set \eqref{eq:1219} can be equivalently written as an intersection of half-spaces:
\begin{equation}\label{1DRHDGQLeps}
    \mathcal{G}_\epsilon^{(2)}=\left\{{\bf U}\in\mathbb{R}^3 \ \big|\ D({\bf U})\geq\epsilon,\ {\bf U}\cdot{\bm n}_*(v_*)\geq \epsilon,\ 
    {\bm n}_*(v_*):=\left(-\sqrt{1-{ v}_*^2},\ -{v}_*,\ 1\right)^\top,\ \forall\ { v}_*\in (-1,1)\right\}.
\end{equation}
\end{proposition}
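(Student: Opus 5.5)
We show the two inclusions $\mathcal{G}_\epsilon^{(1)}\subseteq\mathcal{G}_\epsilon^{(2)}$ and $\mathcal{G}_\epsilon^{(2)}\subseteq\mathcal{G}_\epsilon^{(1)}$. The $D$-constraint is identical in both sets, so everything reduces to one identity for a fixed ${\bf U}=(D,m,E)^\top$ with $D\ge\epsilon>0$:
\[
\inf_{v_*\in(-1,1)} {\bf U}\cdot{\bm n}_*(v_*) \;=\; E-\sqrt{D^2+m^2} \;=\; q({\bf U}),
\]
with the infimum attained. Writing ${\bf U}\cdot{\bm n}_*(v_*)=E-\bigl(D\sqrt{1-v_*^2}+m v_*\bigr)$, the identity amounts to
\[
\sup_{v_*\in(-1,1)}\bigl(D\sqrt{1-v_*^2}+m v_*\bigr)=\sqrt{D^2+m^2},
\]
and we want this supremum to be a maximum.

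For the upper bound, apply Cauchy--Schwarz in $\mathbb{R}^2$ to the vectors $(D,m)$ and $(\sqrt{1-v_*^2},v_*)$. The second vector has unit length, so $D\sqrt{1-v_*^2}+mv_*\le\sqrt{D^2+m^2}$ for every $v_*\in(-1,1)$.

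For attainment, set $v_*^\star:=m/\sqrt{D^2+m^2}$. Since $D>0$, we have $|v_*^\star|<1$, so $v_*^\star$ is an admissible choice. Then $\sqrt{1-(v_*^\star)^2}=D/\sqrt{D^2+m^2}$, and
\[
D\sqrt{1-(v_*^\star)^2}+m v_*^\star=\frac{D^2+m^2}{\sqrt{D^2+m^2}}=\sqrt{D^2+m^2}.
\]
Hence $\min_{v_*}{\bf U}\cdot{\bm n}_*(v_*)=q({\bf U})$.

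Both inclusions now follow. If ${\bf U}\in\mathcal{G}_\epsilon^{(1)}$, then ${\bf U}\cdot{\bm n}_*(v_*)\ge q({\bf U})\ge\epsilon$ for every $v_*$, so ${\bf U}\in\mathcal{G}_\epsilon^{(2)}$. Conversely, if ${\bf U}\in\mathcal{G}_\epsilon^{(2)}$, then $D\ge\epsilon>0$, so $v_*^\star$ is admissible, and taking $v_*=v_*^\star$ gives $q({\bf U})={\bf U}\cdot{\bm n}_*(v_*^\star)\ge\epsilon$.

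The one delicate point is attainment of the supremum inside the open interval $(-1,1)$. It holds exactly because $D>0$; if $D=0$ the maximizer would be $\pm1$, which is excluded. The non-strict inequality $\ge\epsilon$ therefore transfers to $q$ precisely because the $D$-constraint guarantees $|v_*^\star|<1$. Apart from this, the proof is the elementary Cauchy--Schwarz computation above and follows the GQL argument for \eqref{eq:1218}.
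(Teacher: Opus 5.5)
Your proof is correct and is essentially the paper's argument: a Cauchy--Schwarz bound ${\bf U}\cdot{\bm n}_*(v_*)\ge q({\bf U})$, with equality at $v_*=m/\sqrt{D^2+m^2}$, which lies in $(-1,1)$ because $D\ge\epsilon>0$. You additionally spell out the two inclusions and the role of $D>0$, which the paper leaves implicit in its statement $\min_{v_*}\phi({\bf U},v_*)=q({\bf U})-\epsilon$.
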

\begin{proof}
	Let $\phi({\bf U}, v_*):= {\bf U}\cdot{\bm n}_*(v_*)-\epsilon = E-mv_*-D\sqrt{1-v_*^2}-\epsilon$. By the Cauchy-Schwarz inequality, we obtain
 \[
 \phi({\bf U}, v_*) \geq E - \sqrt{D^2+m^2}\sqrt{v_*^2+\left(\sqrt{1-v_*^2}\right)^2} = q({\bf U})-\epsilon.
 \]
 Since the equality holds when $v_* = \frac{m}{\sqrt{D^2+m^2}}\in(-1,1)$, we have $\min\limits_{v_*\in(-1,1)}\phi({\bf U},v_*)=q({\bf U})-\epsilon$. This completes the proof.
\end{proof}
}
For the exact GQL characterization with zero lower bound, normalization of the normal vector is immaterial because the half-space inequality is homogeneous under positive rescaling. 
{
Proposition~\ref{prop:GQL form} states that, for the same $\epsilon$, $q({\bf U}) \ge \epsilon$ is exactly equivalent to ${\bf U}\cdot{\bm n}_*(v_*)\geq \epsilon$ for all ${ v}_*\in (-1,1)$ with the specific unnormalized normal ${\bm n}_*(v_*)$ in \eqref{1DRHDGQLeps}, we thus do not rescale it; consequently the same lower bound $\epsilon$ is used consistently in all projected inequalities. 
If one chooses to normalize ${\bm n}_*(v_*)$ to unit Euclidean length $(\widehat{\bm n}_* = {\bm n}_* / \sqrt{2})$, the equivalent constraint becomes ${\bf U}\cdot\widehat{\bm n}_*\geq\epsilon/\sqrt{2}$. The equivalence is preserved only if the constraint parameter is scaled by the reciprocal of the norm. Our choice of the unnormalized normal vector is purely for notational convenience, as it avoids the introduction of arbitrary scaling factors in the subsequent flux limiter derivation. This normalization property holds generally for the GQL framework in all spatial dimensions.
} Therefore, it suffices to apply the scalar lower-bound operation in Zalesak's FCT limiter to the two linear constraints associated with ${\bm n}_D:=(1,0,0)^\top$ and ${\bm n}_*(v_*)$ in \eqref{1DRHDGQLeps}. To make the description convenient and clear, we follow Zalesak's notation \cite{zalesak2012design} and redefine the related quantities for any fixed vector ${\bm n}$.
\begin{align*}
    P^{-}_{i} &= \min\left(0, -\widehat{\mathcal{F}}^{A}_{i+{1}/{2}}\cdot{\bm n}\right) + \min\left(0,       \widehat{\mathcal{F}}^{A}_{i-{1}/{2}}\cdot{\bm n}\right).\\
    Q^{-}_{i} &= \frac{\Delta x}{\Delta t}\left(\epsilon - {\bf U}^{L}_{i}\cdot{\bm n}\right),\quad {\bf U}^{L}_{i}\ \text{is defined in }\eqref{uLF1DRHD}.\\
    R^{-}_{i} &= 
    \begin{cases}
        \min\left(1, \frac{Q^{-}_{i}}{P^{-}_{i}}\right), & \text{if}\quad P^{-}_{i} < 0,\\
        1,& \text{otherwise}.
    \end{cases}
\end{align*}

{\bf Step 1:} Enforce $D({\bf U})\geq\epsilon$.
This step is identical to the scalar FCT lower-bound limiting. We take ${\bm n} = {\bm n}_D$ (so that ${\bf U}\cdot{\bm n}_D=D({\bf U})$) and use the parameter $\epsilon=\min\bigl\{10^{-13},\ D_i^L\bigr\}$. The coefficients of the density flux limiter are computed by
\begin{equation} \label{alphaD1D}
    \theta_{i+\frac{1}{2}}^D = 
    \begin{cases}
        R_{i}^- & \text{if}\quad \widehat{\mathcal{F}}_{i+{1}/{2}}^{A}\cdot{\bm n}_D \geq 0, \\
        R_{i+1}^- & \text{if}\quad \widehat{\mathcal{F}}_{i+{1}/{2}}^{A}\cdot{\bm n}_D < 0.
    \end{cases}
\end{equation}
The density flux limiter $\theta_{i+{1}/{2}}^D$ can be exactly computed in each interface because $\widehat{\mathcal{F}}_{i+{1}/{2}}^{A,D}$, representing the first component of the anti-diffusive numerical flux in the $x$ direction, is a scalar.

{\bf Step 2:} Enforce ${\bf U}\cdot{\bm n_*}\geq \epsilon\iff q({\bf U})=E-\sqrt{D^2+m^2}\geq\epsilon$.
Similar to {Step 1}, we obtain the formulations of the flux limiter for $q({\bf U})$
\begin{equation} \label{1DRHDPressure}
    \theta_{i+\frac{1}{2}}^q = 
    \begin{cases}
        R_{i}^- & \text{if}\quad \widehat{\mathcal{F}}_{i+{1}/{2}}^{A}\cdot{\bm n}_* \geq 0, \\
        R_{i+1}^- & \text{if}\quad \widehat{\mathcal{F}}_{i+{1}/{2}}^{A}\cdot{\bm n}_* < 0.
    \end{cases}
\end{equation}
In this step, ${\bm n} = {\bm n}_*(v_*)=\left(-\sqrt{1-{ v}_*^2},\ -{v}_*,\ 1\right)^\top$ with ${v}_*\in (-1,1)$, and $\epsilon=\min\bigl\{10^{-13},\ q({\bf U}_i^L)\bigr\}$ is used. The main difference from Step~1 is the presence of the free parameter ${v}_*$. Since $\widehat{\mathcal{F}}_{i+{1}/{2}}^{A}\cdot{\bm n}_*$ in Step~2 depends on ${v}_*$, its sign cannot be determined once and for all at a fixed interface; hence the sign-based coefficient in \eqref{1DRHDPressure} cannot be used directly as a uniform limiter for all GQL half-spaces. We therefore solve the following worst-case optimization problem
\begin{align}
    (P_1): \mathcal{L}_{i} &= \min\limits_{{v}_*\in (-1,1)}\frac{\frac{\Delta x}{\Delta t}\left({\epsilon} - {\bf U}^{L}_{i}\cdot{\bm n}_*\right)}{\min\left(0, -\widehat{\mathcal{F}}^{A}_{i+{1}/{2}}\cdot{\bm n}_*\right) + \min\left(0, \widehat{\mathcal{F}}^{A}_{i-{1}/{2}}\cdot{\bm n}_*\right)} \nonumber \\
    &= \min\limits_{{v}_*\in (-1,1)}\frac{\frac{\Delta x}{\Delta t}\left({\bf U}^{L}_{i}\cdot{\bm n}_*-{\epsilon}\right)}{\max\left(0, \widehat{\mathcal{F}}^{A}_{i+{1}/{2}}\cdot{\bm n}_*\right) + \max\left(0, -\widehat{\mathcal{F}}^{A}_{i-{1}/{2}}\cdot{\bm n}_*\right)} \nonumber \\
    &= \frac{\Delta x}{\Delta t}\left(\max\limits_{{ v}_*\in (-1,1)}
    \frac{\max\left(0, \widehat{\mathcal{F}}^{A}_{i+{1}/{2}}\cdot{\bm n}_*\right) + 
                        \max\left(0, -\widehat{\mathcal{F}}^{A}_{i-{1}/{2}}\cdot{\bm n}_*\right)}
    {{\bf U}^{L}_{i}\cdot{\bm n}_*-{\epsilon}}\right)^{-1}. \tag{P1}\label{1DbeforeQuadForm}
\end{align}
As in the scalar FCT coefficient \(R_i^-\), the actual cell-wise coefficient is capped at one. Equivalently, if \(M_i\) denotes the denominator maximum in the last line of \eqref{1DbeforeQuadForm}, then \(\mathcal{L}_i=1\) when \(M_i=0\), and otherwise
\[
\mathcal{L}_i=\min\left\{1,\frac{\Delta x}{\Delta t\,M_i}\right\}.
\]
The uncapped denominator ratio is introduced below only for deriving the quadratic maximization.
\begin{equation} \label{1Doptfun}
\mathcal{L}_i({\bm n}_*) := \frac{\max\left(0, \widehat{\mathcal{F}}^{A}_{i+{1}/{2}}\cdot{\bm n}_*\right) + 
                        \max\left(0, -\widehat{\mathcal{F}}^{A}_{i-{1}/{2}}\cdot{\bm n}_*\right)}
    {{\bf U}^{L}_{i}\cdot{\bm n}_*-{\epsilon}}.
\end{equation}

\begin{table}[!tb] 
    \renewcommand{\arraystretch}{1.8}
    \centering
    \belowrulesep=0pt
    \aboverulesep=0pt
    \caption{Formulae for coefficients in \eqref{1DUFcdotn}.}
    \label{tab:1DQuadCoeff}
    \setlength{\tabcolsep}{3mm}{
        \begin{tabular}{c|c|c}
            \toprule[1.5pt]
            {} &
            {$(a,b,c)^\top$} &
            {$(d^{\pm},e^{\pm},f^{\pm})^\top$} \\  
            \midrule[1.5pt]
            {$|{u}_*|<1$} & ${\bf M}{\bf U}_i^L-{\bf S}$ & ${\bf M}\widehat{\mathcal{F}}^{A}_{i\pm1/2}$ \\
            \bottomrule[1.5pt]
        \end{tabular}
        }
\end{table}

{Since the normal vector ${\bm n}_*(v_*)$ contains the irrational term $\sqrt{1-|v_{*}|^{2}}$, the minimization problem \eqref{1DbeforeQuadForm} is algebraically cumbersome to handle directly.} To rigorously transform this into a quadratic form, we introduce a stereographic-type rational parameterization of the unit interval. Let $u_*$ be an auxiliary variable; we define the mapping:
\begin{equation}\label{1Dvstar_para} 
	v_{*} := \frac{2u_{*}}{1+u_{*}^{2}} {~~\mbox{ with } u_* \in (-1,1) \quad \implies \quad {\bm n}_*(u_*) = \left(-\frac{1-u_{*}^{2}}{1+u_{*}^{2}}, -\frac{2u_{*}}{1+u_{*}^{2}}, 1\right)^\top.}
\end{equation}
{For the open admissible range, the restriction $|u_*|<1$ is in one-to-one correspondence for $v_*\in(-1,1)$, with inverse $u_*={v_*}/(1+\sqrt{1-v_*^2})$.}
{ To simplify the notation and make our presentation clearer, we introduce the following matrix and vector:
\begin{equation*}
    \mathbf{M} = \begin{pmatrix} 1 & 0 & 1 \\ 0 & 2 & 0 \\ -1 & 0 & 1 \end{pmatrix}, \quad 
    \mathbf{S} = \begin{pmatrix} \epsilon \\ 0 \\ \epsilon \end{pmatrix}.
\end{equation*}
Then we obtain
{
\begin{equation}
    {\bf U}_{i}^L\cdot{\bm n}_*-\epsilon:=\frac{au_{*}^2-bu_{*}+c}{1+u_{*}^2}, \qquad
    \widehat{\mathcal{F}}_{i\pm{1}/{2}}^A\cdot{\bm n}_*:=\frac{d^{\pm}u_{*}^2-e^{\pm}u_{*}+f^{\pm}}{1+u_{*}^2}, \label{1DUFcdotn}
\end{equation}}
where the definitions of the quantities $a$, $b$, $c$, $d^{\pm}$, $e^{\pm}$, and $f^{\pm}$ are listed in Table \ref{tab:1DQuadCoeff}.}

{Using \eqref{1DUFcdotn},} we can solve $(P_1)$ by explicitly expressing the optimization function ${\mathcal{L}}_{i}({\bm n}_*)$ as the quadratic form with an extra constraint $|{ u}_*|<1$. 
{We first introduce some matrix notations, transforming \eqref{1Doptfun} into the matrix-vector formulation. Let $\star \in \{+,-\}$ and define the sign function 
\begin{equation*}
    \tt{sign}(\star) :=
    \begin{cases}
        1 & \text{if}\ \star = +,\\
        -1 & \text{if}\ \star = -.
    \end{cases}
\end{equation*}
Then we define
\begin{equation*}
    A^{\star}:= {\tt sign}(\star)
    \begin{pmatrix}
        d^{\star} & -\frac{e^{\star}}{2} \\
        -\frac{e^{\star}}{2} & f^{\star}
    \end{pmatrix},\qquad
    B:=
    \begin{pmatrix}
        a & -\frac{b}{2} \\
        -\frac{b}{2} & c
    \end{pmatrix},\qquad
    {\bm w}:=
    \begin{pmatrix}
        u_* \\ 1
    \end{pmatrix}.
\end{equation*}
{
Since ${\bm w}^\top B{\bm w}={\bf U}^{L}_{i}\cdot{\bm n}_*-\epsilon\geq 0$, the matrix $B$ is symmetrically positive definite except for the case ${b}^2=4ac\iff q({\bf U}_{i}^L)=\epsilon$. 
In this borderline case,} any anti-diffusive correction may immediately violate $q({\bf U})\ge\epsilon$, and we simply take $\mathcal{L}_{i}=0$ (i.e., keep the low-order update). Otherwise, we rewrite 
\begin{equation*}
    B = R^\top R,\qquad
    R = 
    \begin{pmatrix}
        \sqrt{a} & -\frac{b}{2\sqrt{a}} \\
        0 & \sqrt{c-\frac{{b}^2}{4a}}
    \end{pmatrix}.
\end{equation*}
We can then derive the explicit formulae for matrices $\widehat{A}^{\star}:=R^{-\top}A^{\star}R^{-1}$:
\begin{equation} \label{Astar}
    \widehat{A}^{\star} = {\tt sign}(\star)
    \begin{pmatrix}
        \frac{d^{\star}}{a} & \frac{bd^{\star}-ae^{\star}}{a\sqrt{4ac-{b}^2}} \\
        \frac{bd^{\star}-ae^{\star}}{a\sqrt{4ac-{b}^2}} & 
        \frac{4a^2f^{\star}-2a{b}{e^{\star}}+d^{\star}{b}^2}{a\left(4ac-{b}^2\right)}
    \end{pmatrix}.
\end{equation}
Based on the above matrix-vector notations, we continue to solve $(P_1)$. }

\begin{align*}
    {\mathcal{L}}_{i}&=\frac{\Delta x}{\Delta t}\left(\max\limits_{{\bm w}:=({u}_*,1)^\top\in\mathbb{R}^2 \atop {|{ u}_{*}|<1}}
    \frac{\max\left(0,{\bm w}^\top A^+{\bm w}\right)+\max\left(0,{\bm w}^\top A^-{\bm w}\right)}{{\bm w}^\top B{\bm w}}\right)^{-1}\\
    &=\frac{\Delta x}{\Delta t}\left(\max\limits_{{\bm z}:=R{\bm w}\ \in\ \mathbb{R}^2 \atop {|{ u}_{*}|<1}}
    \frac{\max\left(0,{\bm z}^\top \widehat{A}^+{\bm z}\right)+\max\left(0,{\bm z}^\top \widehat{A}^-{\bm z}\right)}
         {{\bm z}^\top {\bm z}}\right)^{-1}\\
    &=\frac{\Delta x}{\Delta t}\left(\max\limits_{{\bm z}\in\mathbb{R}^2,\ \Vert {\bm z} \Vert_2=1 \atop {|{ u}_{*}|<1}}
    \max\left(0,{\bm z}^\top \widehat{A}^+{\bm z}\right)+\max\left(0,{\bm z}^\top \widehat{A}^-{\bm z}\right)\right)^{-1}.
\end{align*}
The solution to the Rayleigh quotient $\max\limits_{{\bm z}\in\mathbb{R}^2,\ \Vert {\bm z} \Vert_2=1}{\bm z}^\top\widehat{A}^{\star}{\bm z}$ is the maximal eigenvalue of the matrix $\widehat{A}^{\star}$. However, this is not a standard Rayleigh quotient, and there are two main differences. On the one hand, we need an extra check for the feasibility of the associated optimizer ${\bm w}=({u}_*,1)^\top$, i.e., $|{u}_{*}|<1$ or not. On the other hand, for the case $d = 1$ in \eqref{eq:RHD3D}, the optimization function \eqref{1Doptfun} is a sum of two quotients composed of the maximum operation, which is clearly different. We adopt the following lemma to simplify the maximum operation. 
{\begin{lemma}\label{lem:RayleighQ}
Let $p_1,\dots,p_K\in\mathbb{R}$, and ${\bm s} = (s_1, s_2, \dots, s_K)^\top \in \{0,1\}^K$, where $s_k$ is the k-th element of the binary vector ${\bm s}$. Then
\[
\sum_{k=1}^K \max(0,p_k)
= \max_{S\subseteq\{1,\dots,K\}} \sum_{k\in S} p_k
= \max_{\bm s\in\{0,1\}^K} \sum_{k=1}^K s_k p_k.
\]
\end{lemma}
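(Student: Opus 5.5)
The plan is to prove the two equalities separately, treating the right-hand equality as a relabeling and the left-hand one as an elementary upper/lower bound argument.

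For the right-hand equality, we use the bijection between subsets $S\subseteq\{1,\dots,K\}$ and indicator vectors ${\bm s}\in\{0,1\}^K$, given by $s_k=1$ iff $k\in S$. Under this correspondence $\sum_{k\in S}p_k=\sum_{k=1}^K s_kp_k$. The two maxima therefore range over the same finite set of values and coincide.

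For the left-hand equality we prove two inequalities. First, for every ${\bm s}\in\{0,1\}^K$ and every $k$ we have $s_kp_k\le \max(0,p_k)$. Indeed, if $s_k=0$ the left side is $0\le\max(0,p_k)$, and if $s_k=1$ it is $p_k\le\max(0,p_k)$. Summing over $k$ gives $\sum_k s_kp_k\le\sum_k\max(0,p_k)$ for all ${\bm s}$, so the maximum over ${\bm s}$ is bounded by $\sum_k\max(0,p_k)$. Conversely, we choose the specific vector $s_k^\ast=1$ if $p_k>0$ and $s_k^\ast=0$ otherwise, which corresponds to $S^\ast=\{k: p_k>0\}$. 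Then $s_k^\ast p_k=\max(0,p_k)$ termwise, so $\sum_k\max(0,p_k)=\sum_k s_k^\ast p_k\le\max_{\bm s}\sum_k s_kp_k$. The two inequalities together give equality. The empty set, which gives the value $0$, is admissible, so all maxima are well defined over a nonempty finite set.

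There is no genuine obstacle here. The only point to state explicitly is the choice of the maximizer $S^\ast$; ties with $p_k=0$ may be assigned either way. It may also be worth remarking, for later use, that this lemma converts $\max(0,{\bm z}^\top\widehat A^+{\bm z})+\max(0,{\bm z}^\top\widehat A^-{\bm z})$ into $\max_{{\bm s}\in\{0,1\}^2}{\bm z}^\top(s_1\widehat A^++s_2\widehat A^-){\bm z}$. The order of the maxima over ${\bm s}$ and over ${\bm z}$ can then be swapped, which reduces $(P_1)$ to at most four symmetric $2\times2$ eigenvalue problems.
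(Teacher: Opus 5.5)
Your proof is correct and follows essentially the same route as the paper. The paper also takes $S_+=\{k:\,p_k>0\}$ as the maximizer and bounds every other subset sum by $\sum_{k\in S_+}p_k$; your termwise bound $s_kp_k\le\max(0,p_k)$ is a slightly cleaner justification of that bound, since it handles omitted positive indices and included nonpositive ones at once.
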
}
\begin{proof}
Let $S_+:=\{k\in\{1,\dots,K\}\,:\,p_k>0\}$. Then $\sum_{k=1}^K\max(0,p_k)=\sum_{k\in S_+}p_k$.
For any subset $S$, we have $\sum_{k\in S}p_k\le \sum_{k\in S_+}p_k$, because adding any index with $p_k\le 0$ cannot increase the sum.
Therefore the maximum is attained by $S=S_+$, which proves the identities.
\end{proof}
We can apply Lemma \ref{lem:RayleighQ} to the Rayleigh quotient to solve ${\mathcal{L}}_{i}$ immediately by taking $K=2$.
{\begin{equation*}
    {\mathcal{L}}_{i}
    =\frac{\Delta x}{\Delta t}\left(
    \max\left\{0,\
    \max\limits_{{\bm z}\in\mathbb{R}^2,\ \Vert {\bm z} \Vert_2=1 \atop {|{ u}_{*}|<1}}{\bm z}^\top \widehat{A}^+{\bm z},\ \max\limits_{{\bm z}\in\mathbb{R}^2,\ \Vert {\bm z} \Vert_2=1 \atop {|{ u}_{*}|<1}}{\bm z}^\top \widehat{A}^-{\bm z},\ 
    \max\limits_{{\bm z}\in\mathbb{R}^2,\ \Vert {\bm z} \Vert_2=1 \atop {|{ u}_{*}|<1}}{\bm z}^\top \left(\widehat{A}^++\widehat{A}^-\right){\bm z}\right\}
    \right)^{-1}.
\end{equation*}}
{
The remaining step is to solve the Rayleigh quotient with the additional constraint $|{u}_{*}|<1$:
\begin{equation*}
    \widetilde{\mathcal{L}}_{i}:=\max\limits_{{\bm z}\in\mathbb{R}^2,\ \Vert {\bm z} \Vert_2=1 \atop {|{ u}_{*}|<1}}{\bm z}^\top \widehat{A}\ {\bm z},
\end{equation*}}
which is the same as the standard Rayleigh quotient except for a further validation of the associated optimizer ${\bm w}=({u}_*,1)^\top$. If the optimizer ${\bm w}$ associated with the maximal eigenvalue satisfies $|{u}_{*}|<1$, then the solution is consistent with the standard Rayleigh quotient. Otherwise, we compare the remaining feasible eigenvalues with the boundary values. Let $\widehat{\lambda}_1$ and $\widehat{\lambda}_2$ be the eigenvalues of the model matrix $\widehat{A}$, and $\widehat{\lambda}_1 > \widehat{\lambda}_2$. Then we have
\begin{equation*}
    \widetilde{\mathcal{L}}_{i}=
    \begin{cases}
        \max\left\{0,\ \widehat{\lambda}_1\right\} & \text{if}\ \left|{u}_{*,1}\right|<1\\
        \max\left\{0,\ \widehat{\lambda}_2,\ \frac{d}{a},\ \frac{f}{c}\right\} & \text{if}\ \left|{u}_{*,1}\right|\geq 1\ \text{and}\ \left|{u}_{*,2}\right|<1\\
        \max\left\{0,\ \frac{d}{a},\ \frac{f}{c}\right\} & \text{otherwise}
    \end{cases},
\end{equation*}
where {$u_{*,1}$ and $u_{*,2}$ are the first component of the optimizers corresponding to} $\widehat{\lambda}_1$ and $\widehat{\lambda}_2$, respectively. The definitions of $d$, $e$, and $f$ are consistent with $\widehat{A}$.
{The eigenvalues of the matrices $\widehat{A}^{\star}$ ($\star\in\{+,-\}$) and $\widehat{A}^+ + \widehat{A}^-$ are listed below, denoted by $\widehat{\lambda}_{k}^{\star}$ and $\widehat{\lambda}_{k}^{*}$ ($k = 1,2$):
\begin{align}
    \widehat{\lambda}_k^{\star} &= \frac{(-1)^{k-1}\sigma^{\star}+{\tt sign}(\star)(2af^{\star}-{b}{e^{\star}}+2cd^{\star})}{4ac-{b}^2},\label{1Deig1}\\
    \widehat{\lambda}_k^{*} &= \frac{(-1)^{k-1}\sigma^{*}+2af^{*}-{b}{e^{*}}+2cd^{*}}{4ac-{b}^2},\label{1Deig2}
\end{align}
where
\begin{align}
    d^{*}&:=d^+-d^-,\qquad e^{*}:=e^+ - e^-,\qquad f^{*}:=f^+-f^-,\nonumber\\
    \sigma^{\widehat{\star}}&:=2\sqrt{(af^{\widehat{\star}}-cd^{\widehat{\star}})^2-(af^{\widehat{\star}}+cd^{\widehat{\star}}){b}{e^{\widehat{\star}}}+ac{\left(e^{\widehat{\star}}\right)}^2+{b}^2d^{\widehat{\star}} f^{\widehat{\star}}},\quad {\widehat{\star}\in\{\star, *\}}.\label{sigma1D}
\end{align}
The non-negativity of the radicand in \eqref{sigma1D} is a fundamental prerequisite for the well-posedness of $\sigma^{\widehat{\star}}$. {If $b=0$, the radicand reduces to $(af^{\widehat{\star}}-cd^{\widehat{\star}})^2+ac\big(e^{\widehat{\star}}\big)^2\ge0$. Thus, in the quadratic-form representation below, we only need to consider $b\ne0$.} We rewrite it as a symmetric quadratic form to reveal its underlying mathematical structure:
\[
\sigma^{\widehat{\star}} = \sqrt{\mathbf{r}^\top \mathbf{Q} \mathbf{r}}, \quad \text{where} \quad \mathbf{r} = \left( \frac{d^{\widehat{\star}}}{a}, \frac{e^{\widehat{\star}}}{b}, \frac{f^{\widehat{\star}}}{c} \right)^\top,
\]
and the symmetric matrix $\mathbf{Q}$ is defined as
\[
\mathbf{Q} = 2ac
\begin{pmatrix}
    2ac & -b^2 & b^2-2ac \\
    -b^2 & 2b^2 & -b^2 \\
    b^2-2ac & -b^2 & 2ac
\end{pmatrix}.
\]
We prove that $\mathbf{Q}$ is symmetric positive semi-definite for all admissible states by verifying that all the eigenvalues of ${\bf Q}$ are non-negative. The three eigenvalues of ${\bf Q}$
\[
\lambda_{\bf Q}^{(1)} = 0,\qquad \lambda_{\bf Q}^{(2)} = 6ab^2c,\qquad \lambda_{\bf Q}^{(3)} = 2ac(4ac-b^2)
\]
are all non-negative, following the definitions of $a,b,c$ and the admissible state ${\bf U}_i^L\in\mathcal{G}_{\epsilon}^{(1)}$.
Therefore, the quadratic form $\langle \mathbf{r}, \mathbf{Q} \mathbf{r} \rangle$ is non-negative for all $\mathbf{r}$, and $\sigma^{\widehat{\star}}$ is well-defined.
}

{
The key components in computing the $q$-limiting factor are the eigenvalues of matrices $\widehat{A}^{\star}$, defined in \eqref{Astar}. It is necessary to explicitly clarify the situation where $\widehat{A}^{\star}$ is positive definite, positive semi-definite, or indefinite. We summarize it in the following proposition to improve the readability and rigor.

\begin{proposition}[One-dimensional classification of the matrix $\widehat{A}^{\star}$]
Assume that the low-order state ${\bf U}_i^L$ produced by \eqref{uLF1DRHD} satisfies ${\bf U}_i^L\in \mathcal{G}_\epsilon^{(1)}$ for all $i$, where $\epsilon>0$. Then $\widehat{A}^{\star}$ in \eqref{Astar} is
\begin{itemize}
    \item positive definite $\iff$ {\tt sign}$(\star)(2af^{\star}+2cd^{\star}-be^{\star})>0\ $ and $\ 4d^{\star}f^{\star}-(e^{\star})^2>0$,
    \item positive semi-definite $\iff$ {\tt sign}$(\star)(2af^{\star}+2cd^{\star}-be^{\star})\geq0\ $ and $\ 4d^{\star}f^{\star}-(e^{\star})^2\geq0$,
    \item indefinite $\iff$ $4d^{\star}f^{\star}-(e^{\star})^2<0$.
\end{itemize}
\end{proposition}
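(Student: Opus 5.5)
The classification will follow from the trace and determinant of the $2\times 2$ symmetric matrix $\widehat{A}^{\star}$. The key observation is that $\widehat{A}^{\star}=R^{-\top}A^{\star}R^{-1}$ is congruent to $A^{\star}$, and $R$ is invertible under the standing assumption. Indeed, since ${\bf U}_i^L\in\mathcal{G}_\epsilon^{(1)}$ and we are in the non-borderline case, $B$ is symmetric positive definite, so $a>0$ and $4ac-b^2>0$. Sylvester's law of inertia then gives that $\widehat{A}^{\star}$ and $A^{\star}$ have the same inertia. Equivalently, $\widehat{A}^{\star}$ has the same inertia as the pencil $A^{\star}-\lambda B$.

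The plan is to compute the two invariants directly from \eqref{Astar}.

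\emph{Determinant.} We have
\[
\det\widehat{A}^{\star}=\det A^{\star}/\det B=\frac{d^{\star}f^{\star}-(e^{\star})^2/4}{ac-b^2/4}=\frac{4d^{\star}f^{\star}-(e^{\star})^2}{4ac-b^2}.
\]
The factor ${\tt sign}(\star)^2=1$ drops out. Since $4ac-b^2>0$, the sign of $\det\widehat{A}^{\star}$ equals the sign of $4d^{\star}f^{\star}-(e^{\star})^2$.

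\emph{Trace.} Summing the diagonal entries of \eqref{Astar} over the common denominator $a(4ac-b^2)$ gives the numerator
\[
d^{\star}(4ac-b^2)+4a^2f^{\star}-2abe^{\star}+d^{\star}b^2=a\bigl(4cd^{\star}+4af^{\star}-2be^{\star}\bigr).
\]
Hence
\[
\operatorname{tr}\widehat{A}^{\star}={\tt sign}(\star)\,\frac{2(2af^{\star}+2cd^{\star}-be^{\star})}{4ac-b^2}.
\]
This agrees with the sum of the eigenvalues in \eqref{1Deig1}, which serves as a consistency check. Its sign is therefore the sign of ${\tt sign}(\star)(2af^{\star}+2cd^{\star}-be^{\star})$.

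With both invariants in hand, I will apply the elementary $2\times2$ facts:
\begin{itemize}
\item a symmetric matrix is positive definite iff its trace and determinant are both positive;
\item it is positive semi-definite iff both are nonnegative, because the eigenvalues are real with $\lambda_1\lambda_2=\det\ge0$ and $\lambda_1+\lambda_2=\operatorname{tr}\ge0$;
\item it is indefinite iff its determinant is negative.
\end{itemize}
Substituting the two sign identities yields the three bullets of the proposition.

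The main obstacle is the semi-definite and indefinite cases, where one must check that no degenerate configuration slips through. For instance, $\det=0$ with $\operatorname{tr}<0$ gives negative semi-definite, which is correctly excluded. Likewise $\det<0$ forces eigenvalues of opposite signs regardless of the trace. I will also justify that the borderline $q({\bf U}_i^L)=\epsilon$ is excluded, since there $R$ is undefined; this is exactly the case the paper handles separately by setting $\mathcal{L}_i=0$.
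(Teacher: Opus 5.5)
Your proposal is correct and takes essentially the same route as the paper: both reduce the classification to the signs of the trace and determinant of the $2\times2$ symmetric matrix $\widehat{A}^{\star}$. Your explicit formulas, $\det\widehat{A}^{\star}=(4d^{\star}f^{\star}-(e^{\star})^2)/(4ac-b^2)$ and the trace expression, fill in details the paper leaves implicit; so does your remark that the strict inequality $4ac-b^2>0$ is required for $R$, and hence $\widehat{A}^{\star}$, to exist, whereas the paper's proof only writes $4ac-b^2\geq 0$.
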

\begin{proof}
    Since ${\bf U}_i^L\in \mathcal{G}_\epsilon^{(1)}$ for all $i$, we have $a>0$, $c>0$, and $4ac-b^2\geq0$. A matrix is positive definite if and only if its trace and determinant are both positive, which are consistent with {\tt sign}$(\star)(2af^{\star}+2cd^{\star}-be^{\star})>0\ $ and $\ 4d^{\star}f^{\star}-(e^{\star})^2>0$ for $\widehat{A}^{\star}$. Similar arguments can be applied to the other two cases.
\end{proof}
}

\begin{remark}[Two branches in the rational parameterization]\label{rem:branches}
The rational map \eqref{1Dvstar_para} is two-to-one: $u_*$ and $1/u_*$ generate the same $v_*\in(-1,1)$ and hence the same normal vector ${\bm n}_*(v_*)$.
{Consequently, the optimization may be restricted to either branch, $|u_*| < 1$ or $|u_*| > 1$, as they yield the same final result.}
\end{remark}

\begin{remark}[Conservative enforcement of GQL constraints]\label{rem:qest}
	For any fixed parameter $v_*\in(-1,1)$, the inequality ${\bf U}\cdot{\bm n}_*(v_*) \ge \epsilon$ defines a single linear half-space. In a scalar setting, this would admit a parameter-dependent Zalesak coefficient, denoted here as $R_i^-(v_*)$, which depends on the sign of the projected anti-diffusive flux $\widehat{\mathcal{F}}_{i+1/2}^A\cdot{\bm n}_*(v_*)$.
	However, the admissible set $G$ is the intersection of all such half-spaces. To guarantee that the updated solution ${\bf U}_i^{n+1}$ resides in $G$, we require a single limiting coefficient that enforces the constraint \emph{uniformly} for all $v_*$.
	Since the flux sign may vary with $v_*$, we compute a cell-wise constant $\mathcal{L}_i$ via the worst-case minimization problem \eqref{1DbeforeQuadForm}, ensuring that $\mathcal{L}_i \le R_i^-(v_*)$ for all $v_* \in (-1,1)$. We then define the (uniform) interface limiting factor as
	\begin{equation}\label{alphaq1D}
		\widehat{\theta}_{i+1/2}^q := \min\{\mathcal{L}_i,\ \mathcal{L}_{i+1}\}.
	\end{equation}
	This construction guarantees that ${\bf U}_i^{n+1}\cdot{\bm n}_*(v_*)\ge\epsilon$ holds for all $v_*$, which is sufficient to prove the PCP property in Theorem~\ref{thm:1DBP}.
\end{remark}

Before ending this subsection, we summarize the one-dimensional PCP property.

\begin{theorem}[One-dimensional PCP flux limiter]\label{thm:1DBP}
Assume that, at a given SSP-RK stage, the low-order state ${\bf U}_i^L$ produced by \eqref{uLF1DRHD} satisfies ${\bf U}_i^L\in \mathcal{G}_\epsilon^{(1)}$ for all $i$, where $\epsilon>0$.
Let the density limiting factors $\theta_{i+1/2}^D$ be defined by \eqref{alphaD1D}, and let the $q$-limiting factors $\widehat{\theta}_{i+1/2}^q$ be defined by \eqref{alphaq1D}.
Define the final interface limiter by
\begin{equation}\label{eq:theta_final_1d}
    \theta_{i+1/2}:=\min\bigl\{\theta_{i+1/2}^D,\ \widehat{\theta}_{i+1/2}^q\bigr\}.
\end{equation}
Then the flux-corrected update \eqref{uPCP1D} satisfies ${\bf U}_i^{n+1}\in \mathcal{G}_\epsilon^{(1)}$ for all $i$.

Moreover, if ${\bf U}_i^n\in\mathcal{G}_\epsilon^{(1)}$ and the time step satisfies the standard LF CFL condition $\alpha\,\Delta t/\Delta x\le 1/2$, then the low-order update \eqref{uLF1DRHD}--\eqref{eq:LF_loworder} yields ${\bf U}_i^L\in\mathcal{G}_\epsilon^{(1)}$, and hence the overall limited high-order scheme is PCP.
\end{theorem}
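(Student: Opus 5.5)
The proof reduces both constraints to scalar half-space inequalities and then applies the standard Zalesak lower-bound argument to each one. By Proposition~\ref{prop:GQL form}, ${\bf U}_i^{n+1}\in\mathcal{G}_\epsilon^{(1)}$ is equivalent to two conditions: ${\bf U}_i^{n+1}\cdot{\bm n}_D\ge\epsilon$, and ${\bf U}_i^{n+1}\cdot{\bm n}_*(v_*)\ge\epsilon$ for every $v_*\in(-1,1)$. So it suffices to fix an arbitrary vector ${\bm n}\in\{{\bm n}_D\}\cup\{{\bm n}_*(v_*)\}$ and prove the scalar inequality ${\bf U}_i^{n+1}\cdot{\bm n}\ge\epsilon$.

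Dotting \eqref{uPCP1D} with ${\bm n}$ and using $-\theta x\ge\theta\min(0,-x)$, $\theta x\ge \theta\min(0,x)$ for $\theta\ge0$ gives
\[
{\bf U}_i^{n+1}\cdot{\bm n}\ \ge\ {\bf U}_i^{L}\cdot{\bm n}+\frac{\Delta t}{\Delta x}\Bigl(\theta_{i+1/2}\min\bigl(0,-\widehat{\mathcal F}^A_{i+1/2}\cdot{\bm n}\bigr)+\theta_{i-1/2}\min\bigl(0,\widehat{\mathcal F}^A_{i-1/2}\cdot{\bm n}\bigr)\Bigr).
\]
The key elementary fact is that $\theta\min(0,x)\ge R\min(0,x)$ whenever $0\le\theta\le R$.

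The first task is to show that each interface factor is bounded by $R_i^-({\bm n})$ whenever the corresponding term is active (negative) for cell $i$.

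For ${\bm n}_D$ this follows from the sign-selection rule \eqref{alphaD1D}. At interface $i+1/2$, the term for cell $i$ is active only when $\widehat{\mathcal F}^A_{i+1/2}\cdot{\bm n}_D>0$, in which case $\theta^D_{i+1/2}=R_i^-$. Symmetrically, at interface $i-1/2$ the active case uses $R_i^-$ as well. Together with $\theta_{i\pm1/2}\le\theta^D_{i\pm1/2}$ from \eqref{eq:theta_final_1d}, the right-hand side is at least ${\bf U}_i^L\cdot{\bm n}_D+\frac{\Delta t}{\Delta x}R_i^-P_i^-$.

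There are then two cases. If $P_i^-=0$, this bound is just ${\bf U}_i^L\cdot{\bm n}_D\ge\epsilon$. If $P_i^-<0$, then $R_i^-\le Q_i^-/P_i^-$ gives $R_i^-P_i^-\ge Q_i^-$, so the bound is at least ${\bf U}_i^L\cdot{\bm n}_D+\frac{\Delta t}{\Delta x}Q_i^-=\epsilon$.

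For ${\bm n}_*(v_*)$ the same chain applies, using the uniform factor $\widehat\theta^q_{i\pm1/2}\le\mathcal{L}_i$ from \eqref{alphaq1D}. We need $\mathcal{L}_i\le R_i^-(v_*)$ for all $v_*$, which is exactly the content of the worst-case problem \eqref{1DbeforeQuadForm} together with the cap at one. This also explains why the sign-selection can be dropped: a cell-wise bound on both interfaces dominates either sign case. The degenerate case $q({\bf U}_i^L)=\epsilon$, where $\mathcal{L}_i=0$, is trivial.

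Two technical points need care. First, I must confirm that the rational reparameterization \eqref{1Dvstar_para}, together with Remark~\ref{rem:branches}, makes the eigenvalue-based value equal to the true infimum over all $v_*\in(-1,1)$, rather than merely bounding it. Any overestimate of the maximum $M_i$ would still be safe, since it only decreases $\mathcal{L}_i$. Second, $\epsilon$ is chosen cell-wise. I will argue with one fixed $\epsilon$ not exceeding $D_i^L$ and $q({\bf U}_i^L)$ for all $i$, so that all inequalities refer to the same set.

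For the second claim, I write the LF update as a convex combination,
\[
{\bf U}_i^L=(1-2\lambda\alpha){\bf U}_i^n+\lambda\alpha\Bigl({\bf U}_{i+1}^n-\tfrac{{\bf F}({\bf U}_{i+1}^n)}{\alpha}\Bigr)+\lambda\alpha\Bigl({\bf U}_{i-1}^n+\tfrac{{\bf F}({\bf U}_{i-1}^n)}{\alpha}\Bigr),\qquad \lambda=\Delta t/\Delta x,
\]
where the first coefficient is nonnegative because $\lambda\alpha\le1/2$. Then I invoke the known fact from \cite{WuTang2015} that ${\bf U}\pm{\bf F}({\bf U})/\alpha\in\mathcal G$, and use convexity of $\mathcal G$ as an intersection of half-spaces (GQL form).

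The main obstacle is the $\epsilon$-version of this fact. The $D$-component is $D(1\pm v/\alpha)$, which is only guaranteed to be nonnegative, so membership in $\mathcal G_\epsilon$ is not inherited directly. I would first try to prove $({\bf U}\pm{\bf F}/\alpha)\cdot{\bm n}_*\ge (1-|v|/\alpha)\,{\bf U}\cdot{\bm n}_*$-type bounds via GQL. If no uniform $\epsilon$ survives, I would fall back on membership in $\mathcal G$ itself. In that case the stage-wise choice $\epsilon=\min\{10^{-13},D_i^L\}$ and $\epsilon=\min\{10^{-13},q({\bf U}_i^L)\}$ makes the hypothesis of the first part hold automatically, and PCP follows.
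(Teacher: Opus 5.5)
Your argument for the first claim is the paper's argument. You project \eqref{uPCP1D} onto ${\bm n}_D$ and onto each ${\bm n}_*(v_*)$, run the scalar Zalesak estimate, and use $\theta_{i\pm1/2}\le\widehat{\theta}^q_{i\pm1/2}\le\mathcal{L}_i\le R_i^-(v_*)$ uniformly in $v_*$. You write out the inequality ${\bf U}_i^{n+1}\cdot{\bm n}\ge{\bf U}_i^L\cdot{\bm n}+\frac{\Delta t}{\Delta x}R_i^-P_i^-\ge\epsilon$, which the paper only invokes, and this part is correct.

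That estimate for cell $i$ involves only $Q_i^-$, so it gives ${\bf U}_i^{n+1}\cdot{\bm n}\ge\epsilon_i$ with each cell's own $\epsilon_i$. You therefore do not need a single global $\epsilon$. This matters because the implemented $\epsilon$ is cell-wise and differs between the $D$-step and the $q$-step.

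You also do not need the eigenvalue procedure to be exact. The theorem defines $\widehat{\theta}^q$ through the exact problem \eqref{1DbeforeQuadForm}, and, as you say, any overestimate of $M_i$ is safe. Do not try to certify exactness through Remark~\ref{rem:branches}. Under \eqref{1Dvstar_para}, the first component of ${\bm n}_*(1/u_*)$ is the negative of that of ${\bm n}_*(u_*)$. So the branch $|u_*|>1$ encodes the half-spaces $E-mv_*+D\sqrt{1-v_*^2}\ge\epsilon$, not the GQL ones, and the restriction $|u_*|<1$ genuinely matters. A safe computable bound is the unconstrained top generalized eigenvalue. This is legitimate because $B$ is positive definite whenever $q({\bf U}_i^L)>\epsilon$.

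The paper's proof stops after the first claim; for the second it relies only on the citation before the theorem. Your treatment of it has two issues.

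First, your convex decomposition is off by a factor of two. The correct identity is
\[
{\bf U}_i^L=(1-\lambda\alpha){\bf U}_i^n+\tfrac{\lambda\alpha}{2}\Bigl({\bf U}_{i+1}^n-\tfrac{{\bf F}({\bf U}_{i+1}^n)}{\alpha}\Bigr)+\tfrac{\lambda\alpha}{2}\Bigl({\bf U}_{i-1}^n+\tfrac{{\bf F}({\bf U}_{i-1}^n)}{\alpha}\Bigr),
\]
which needs only $\lambda\alpha\le1$, so the stated CFL condition suffices with room to spare.

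Second, your hedge resolves negatively: the $\epsilon$-version stated in the theorem is false, so your first plan cannot succeed. Take $D_{i-1}^n=D_i^n=D_{i+1}^n=\epsilon$ with $v_{i+1}>v_{i-1}$, and choose pressures large enough that $q\ge\epsilon$ in all three cells. The $D$-component of the identity then gives $D_i^L=\epsilon\bigl(1-\tfrac{\lambda}{2}(v_{i+1}-v_{i-1})\bigr)<\epsilon$ for every $\lambda>0$. An expansion lowers the density, so a positive lower bound is not invariant; only positivity is.

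Your fallback is therefore the correct and only provable statement. From ${\bf U}^n\in\mathcal{G}$ and ${\bf U}\pm{\bf F}({\bf U})/\alpha\in\mathcal{G}$, convexity gives ${\bf U}_i^L\in\mathcal{G}$. The stage-wise, cell-wise choice of $\epsilon$ then places ${\bf U}_i^L$ in the corresponding $\epsilon_i$-set, and the first part yields ${\bf U}_i^{n+1}\in\mathcal{G}$.
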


\begin{proof}
The set $\mathcal{G}_\epsilon^{(2)}$ in \eqref{1DRHDGQLeps} is an intersection of half-spaces defined by the linear functionals
\[
    \ell_D({\bf U}) := D({\bf U}) = {\bf U}\cdot{\bm n}_D,\qquad {\bm n}_D=(1,0,0)^\top,
\]
and
\[
    \ell_{v_*}({\bf U}) := {\bf U}\cdot{\bm n}_*(v_*),\qquad
    {\bm n}_*(v_*):=\bigl(-\sqrt{1-v_*^2},\,-v_*,\,1\bigr)^\top,\quad v_*\in(-1,1).
\]
Since $\ell_D$ and $\ell_{v_*}$ are linear in ${\bf U}$, taking the dot product of \eqref{uPCP1D} with a fixed ${\bm n}$ yields a scalar flux-corrected transport (FCT) update for the scalar quantity $\ell_{\bm n}({\bf U}_i)$, with anti-diffusive flux $\widehat{\mathcal{F}}_{i+1/2}^A\cdot{\bm n}$.

\medskip
\noindent\textbf{(i) Density constraint.}
With ${\bm n}={\bm n}_D$, the scalar FCT lower-bound limiting (Zalesak limiter) shows that choosing $\theta_{i+1/2}\le \theta_{i+1/2}^D$ guarantees
$D({\bf U}_i^{n+1})={\bf U}_i^{n+1}\cdot{\bm n}_D\ge \epsilon$ for all $i$.

\medskip
\noindent\textbf{(ii) GQL constraints.}
Fix any $v_*\in(-1,1)$ and take ${\bm n}={\bm n}_*(v_*)$. The same scalar FCT argument shows that if the interface factors satisfy
$\theta_{i+1/2}\le \theta_{i+1/2}(v_*)$ for all interfaces, where $\theta_{i+1/2}(v_*)$ is the Zalesak coefficient associated with $\ell_{v_*}$, then
${\bf U}_i^{n+1}\cdot{\bm n}_*(v_*)\ge \epsilon$ for all $i$.
Our construction computes cell-wise constants $\mathcal{L}_i$ such that $\mathcal{L}_i\le R_i^-(v_*)$ for all $v_*\in(-1,1)$, and then sets
$\widehat{\theta}_{i+1/2}^q=\min\{\mathcal{L}_i,\mathcal{L}_{i+1}\}$; see \eqref{alphaq1D}--\eqref{1DbeforeQuadForm}.
Therefore $\theta_{i+1/2}\le \widehat{\theta}_{i+1/2}^q\le \theta_{i+1/2}(v_*)$ for all $v_*$, implying ${\bf U}_i^{n+1}\cdot{\bm n}_*(v_*)\ge \epsilon$ for all $v_*$.

\medskip
Combining (i)--(ii) and \eqref{1DRHDGQLeps} yields ${\bf U}_i^{n+1}\in\mathcal{G}_\epsilon^{(2)}$ for all $i$.
By the equivalence \eqref{eq:1219}--\eqref{1DRHDGQLeps}, this implies ${\bf U}_i^{n+1}\in\mathcal{G}_\epsilon^{(1)}$.
\end{proof}

\subsection{Multi-dimensional case}
The discussions about the case $d = 1$ in \eqref{eq:RHD3D} can be easily generalized to $d = 2$ and $d = 3$. This is because the GQL tools conveniently transform the dimension of computational variables ${\bf U}$ and ${\widehat{\mathcal{F}}}$ into that of the introduced auxiliary ones. For uniform rectangular cells, we only need to apply the one-dimensional flux limiter in each spatial dimension separately in the finite difference framework.
To make the notation clear, we use $y$ and $z$ to represent the other two spatial dimensions, rather than $x_2$ and $x_3$ in \eqref{eq:RHD3D}. We analyze the two-dimensional case and provide a brief summary of the three-dimensional case.

The finite difference FCT scheme for the two-dimensional RHD
\[
    {\bf U}_t+{\bf F}({\bf U})_x+{\bf G}({\bf U})_y = 0,
\]
\[
    {\bf U} = 
    \begin{pmatrix}
        D \\ m_x \\ m_y \\ E
    \end{pmatrix},
    \qquad
    {\bf F}({\bf U}) = 
    \begin{pmatrix}
        D v_x \\ m_xv_x+p \\ m_yv_x \\ m_x
    \end{pmatrix},
    \qquad
    {\bf G}({\bf U}) = 
    \begin{pmatrix}
        D v_y \\ m_xv_y \\ m_yv_y+p \\ m_y
    \end{pmatrix}
\]
can be formulated as
\begin{equation}\label{uLF2DRHD}
    {\bf U}_{ij}^{L} := {\bf U}_{ij}^n
-\frac{\Delta t}{\Delta x}\left(\widehat{\mathcal{F}}_{i+\frac{1}{2},j}^L-\widehat{\mathcal{F}}_{i-\frac{1}{2},j}^L\right)
-\frac{\Delta t}{\Delta y}\left(\widehat{\mathcal{G}}_{i,j+\frac{1}{2}}^L-\widehat{\mathcal{G}}_{i,j-\frac{1}{2}}^L\right),
\end{equation}
\begin{equation}\label{2DFE}
    {\bf U}_{ij}^{n+1} = {\bf U}_{ij}^L
-\frac{\Delta t}{\Delta x}\left(\theta_{i+\frac{1}{2},j}\widehat{\mathcal{F}}_{i+\frac{1}{2},j}^A-\theta_{i-\frac{1}{2},j}\widehat{\mathcal{F}}_{i-\frac{1}{2},j}^A\right)
-\frac{\Delta t}{\Delta y}\left(\theta_{i,j+\frac{1}{2}}\widehat{\mathcal{G}}_{i,j+\frac{1}{2}}^A-\theta_{i,j-\frac{1}{2}}\widehat{\mathcal{G}}_{i,j-\frac{1}{2}}^A\right).
\end{equation}

The density limiter is similar to the one-dimensional case, but it requires computation in both the $x$ and $y$ directions. Following Zalesak's notation \cite{zalesak2012design}, for a fixed vector ${\bm n}$, we define
\begin{align*}
    P^{-}_{ij} &= \Delta y\left[\min\left(0, -\widehat{\mathcal{F}}^{A}_{i+\frac{1}{2},j}\cdot{\bm n}\right) + \min\left(0, \widehat{\mathcal{F}}^{A}_{i-\frac{1}{2},j}\cdot{\bm n}\right)\right]
                +\Delta x\left[\min\left(0, -\widehat{\mathcal{G}}^{A}_{i,j+\frac{1}{2}}\cdot{\bm n}\right) + \min\left(0, \widehat{\mathcal{G}}^{A}_{i,j-\frac{1}{2}}\cdot{\bm n}\right)\right].\\
    Q^{-}_{ij} &= \frac{\Delta x\Delta y}{\Delta t}\left(\epsilon - {\bf U}^{L}_{ij}\cdot{\bm n}\right),\quad {\bf U}^{L}_{ij}\ \text{is defined in }\eqref{uLF2DRHD}.\\
    R^{-}_{ij} &= 
    \begin{cases}
        \min\left(1, \frac{Q^{-}_{ij}}{P^{-}_{ij}}\right), & \text{if}\quad P^{-}_{ij} < 0,\\
        1,& \text{otherwise}.
    \end{cases}
\end{align*}
Taking ${\bm n} = {\bm n}_D=(1,0,0,0)^\top$ (so that ${\bf U}\cdot{\bm n}_D=D({\bf U})$) and using $\epsilon = \min\bigl\{10^{-13},D({\bf U}_{ij}^L)\bigr\}$, the coefficients of the density flux limiter are
{\begin{align*}
    \theta_{i+\frac{1}{2},j}^D = 
    \begin{cases}
        R_{ij}^- & \text{if}\ \widehat{\mathcal{F}}_{i+\frac{1}{2},j}^{A}\cdot{\bm n}_D \geq 0, \\
        R_{i+1,j}^- & \text{if}\ \widehat{\mathcal{F}}_{i+\frac{1}{2},j}^{A}\cdot{\bm n}_D  < 0.
    \end{cases},
    \qquad\qquad
    \theta_{i,j+\frac{1}{2}}^D = 
    \begin{cases}
        R_{ij}^- & \text{if}\ \widehat{\mathcal{G}}_{i,j+\frac{1}{2}}^{A}\cdot{\bm n}_D  \geq 0, \\
        R_{i,j+1}^- & \text{if}\ \widehat{\mathcal{G}}_{i,j+\frac{1}{2}}^{A}\cdot{\bm n}_D  < 0.
    \end{cases},
\end{align*}
where ${\bm n}_D:=(1,0,0,0)^\top$ here.}

The limiter for $q({\bf U})=E-\sqrt{D^2+|{\bm m}|^2}\geq\epsilon$ is a little different. The main difference lies in the dimension of the GQL auxiliary variable ${\bm v}_*\in\mathbb{B}_1({\bf 0})\subseteq\mathbb{R}^2$. The GQL representation of the admissible state set in the two-dimensional case is
\begin{equation*}
    \mathcal{G}_\epsilon^{\tt 2D}=\left\{{\bf U}\in\mathbb{R}^4 \mid D({\bf U})\geq\epsilon,\ {\bf U}\cdot{\bm n_*}\geq \epsilon,\ {\bm n_*}=\left(-\sqrt{1-|{\bm v}_*|^2},\ -{\bm v}_*^\top,\ 1\right)^\top,\ \forall\ {\bm v}_*\in\mathbb{B}_1({\bf 0})\subseteq\mathbb{R}^2\right\}.
\end{equation*}
As in the one-dimensional case, we fix a small parameter $\epsilon>0$ such that the low-order update satisfies ${\bf U}_{ij}^L\in \mathcal{G}_\epsilon^{\tt 2D}$ for all cells. In computation, at each SSP-RK stage we set $\epsilon=\min\bigl\{10^{-13},\ q({\bf U}_{ij}^L)\bigr\}$.
We generalize the stereographic-type transformation in \eqref{1Dvstar_para} to higher dimensions (a rational parameterization of the unit ball) and obtain
\begin{equation}\label{2Dvstar_para}
    {\bm v}_*:=\frac{2{\bm u}_{*}}{1+\|{\bm u}_*\|_2^2} {~~\mbox{ with } \|{\bm u}_*\|_2<1 \quad \implies \quad 
    {\bm n}_*({\bm u}_*)=\left(-\frac{1-\|{\bm u}_*\|_2^2}{1+\|{\bm u}_*\|_2^2},\ -\frac{2{\bm u}_{*}^\top}{1+\|{\bm u}_*\|_2^2},\ 1\right)^\top,}
\end{equation}
which has the same form as the one-dimensional case because we apply the same parameterization for all velocity components. Therefore, it is also applicable to the three-dimensional RHD. Next comes the key step. We need to solve the following optimization problem
\begin{equation*}
    (P_2): \mathcal{L}_{ij} = \min\limits_{{\bm v}_*\in\mathbb{B}_1({\bf 0})\subseteq\mathbb{R}^2}
    \frac{Q^{-}_{ij}}{P^{-}_{ij}}.
\end{equation*}
{
Let ${\bm w}:=({\bm u}_*^\top,1)^\top\in\mathbb{R}^3$. Define $\widehat{A}_x^{\star}:=\Delta y\widetilde{A}_x^{\star}$, $\widehat{A}_y^{\star}:=\Delta x\widetilde{A}_y^{\star}$, and the matrix 
\begin{equation} \label{2DQuadCoeff}
    \widetilde{A}_s^{\star}:=R^{-\top}A_s^{\star} R^{-1} = {\tt sign}(\star)
    \begin{pmatrix}
        \frac{d_s^{\star}}{a}\mathcal{I}_{2\times2} & \frac{d_s^{\star}{\bm b}-a{\bm e}_s^{\star}}{a\sqrt{4ac-|{\bm b}|^2}} \\
        \frac{d_s^{\star}{\bm b}^\top-a({\bm e}_s^{\star})^\top}{a\sqrt{4ac-|{\bm b}|^2}} &
        \frac{4a^2f_s^{\star}-2a{\bm b}\cdot{\bm e}_s^{\star}+d_s^{\star}|{\bm b}|^2}{a\left(4ac-|{\bm b}|^2\right)}
    \end{pmatrix},\qquad s = x,y,\quad \star\in\{+,-\}
\end{equation}
with the coefficients listed in Table \ref{tab:2DQuadCoeff}.
To simplify the notation and make our presentation clearer, we introduce the following matrix and vector:
\begin{equation*}
    \mathbf{M}_d = \begin{pmatrix} 1 & {\bf 0}_d^\top & 1 \\ {\bf 0}_d & 2\mathcal{I}_{d\times d} & {\bf 0}_d \\ -1 & {\bf 0}_d^\top & 1 \end{pmatrix}, \quad 
    \mathbf{S}_d = \begin{pmatrix} \epsilon \\ {\bf 0}_d \\ \epsilon \end{pmatrix},
\end{equation*}
where $d$ is consistent with the space dimension.
Following the idea in the one-dimensional case and applying Lemma \ref{lem:RayleighQ} for $K = 4$, we have

\begin{align}
    \widehat{\mathcal{L}}_{ij}:&=\max\limits_{{\bm w}\in\mathbb{R}^3 \atop {\|{\bm u}_*\|_2<1} }
    \frac{\Delta y\left[\max\left(0,{\bm w}^\top A_x^+{\bm w}\right)+\max\left(0,{\bm w}^\top A_x^-{\bm w}\right)\right]
         +\Delta x\left[\max\left(0,{\bm w}^\top A_y^+{\bm w}\right)+\max\left(0,{\bm w}^\top A_y^-{\bm w}\right)\right]}
         {{\bm w}^\top B{\bm w}}\nonumber \\
    &=\max\limits_{{\bm z}:=R{\bm w}\ \in\ \mathbb{R}^3 \atop {\|{\bm u}_*\|_2<1} }
    \frac{\Delta y\left[\max\left(0,{\bm z}^\top \widetilde{A}_x^+{\bm z}\right)+\max\left(0,{\bm z}^\top \widetilde{A}_x^-{\bm z}\right)\right]
         +\Delta x\left[\max\left(0,{\bm z}^\top \widetilde{A}_y^+{\bm z}\right)+\max\left(0,{\bm z}^\top \widetilde{A}_y^-{\bm z}\right)\right]}
         {{\bm z}^\top {\bm z}} \nonumber \\
    &=\max\limits_{{\bm z}\in\mathbb{R}^3,\ \Vert {\bm z} \Vert_2=1,\ {\|{\bm u}_*\|_2<1} }
    \Big\{\max\left(0,{\bm z}^\top \widehat{A}_x^+{\bm z}\right)
         +\max\left(0,{\bm z}^\top \widehat{A}_x^-{\bm z}\right)
         +\max\left(0,{\bm z}^\top \widehat{A}_y^+{\bm z}\right)
         +\max\left(0,{\bm z}^\top \widehat{A}_y^-{\bm z}\right)\Big\},\nonumber \\
    &=\max\Big\{0,\ 
                \max\limits_{{\bm z}\in\mathbb{R}^3,\ \Vert {\bm z} \Vert_2=1,\ {\|{\bm u}_*\|_2<1} }{\bm z}^\top \widehat{A}_x^+{\bm z},\ \ldots,\ 
                \max\limits_{{\bm z}\in\mathbb{R}^3,\ \Vert {\bm z} \Vert_2=1,\ {\|{\bm u}_*\|_2<1} }{\bm z}^\top \big(\widehat{A}_x^++\widehat{A}_x^-+\widehat{A}_y^++\widehat{A}_y^-\big) {\bm z}\Big\}.\label{16mat2D}
\end{align}
}

\begin{table}[!thb] 
    \renewcommand{\arraystretch}{1.8}
    \centering
    \belowrulesep=0pt
    \aboverulesep=0pt
    \caption{Formulae for coefficients in \eqref{2DQuadCoeff}.}
    \label{tab:2DQuadCoeff}
    \setlength{\tabcolsep}{3mm}{
        \begin{tabular}{c|c|c|c}
            \toprule[1.5pt]
            {} & {$\left(a,{\bm b}^\top,c\right)^\top$} & {$\left(d_x^{\pm},({\bm e}_x^{\pm})^\top,f_x^{\pm}\right)^\top$} & {$\left(d_y^{\pm},({\bm e}_y^{\pm})^\top,f_y^{\pm}\right)^\top$} \\  
            \midrule[1.5pt]
            {$\|{\bm u}_*\|_2<1$} & ${\bf M}_2{\bf U}_{ij}^L-{\bf S}_2$ & ${\bf M}_2\widehat{\mathcal{F}}_{i\pm{1}/{2},j}^{A}$ & ${\bf M}_2\widehat{\mathcal{G}}_{i,j\pm{1}/{2}}^{A}$ \\
            \bottomrule[1.5pt]
        \end{tabular}
        }
\end{table}

\begin{remark}[On restricting to $\|{\bm u}_*\|_2<1$]\label{rem:md-branch}
The multi-dimensional mapping \eqref{2Dvstar_para} (and its 3D analogue) is also two-to-one: ${\bm u}_*$ and ${\bm u}_*/\|{\bm u}_*\|_2^2$ generate the same ${\bm v}_*\in\mathbb{B}_1(0)$ and hence the same normal vector ${\bm n}_*( {\bm v}_*)$.
Therefore the cases $\|{\bm u}_*\|_2<1$ and $\|{\bm u}_*\|_2>1$ are equivalent.
In practice we restrict the optimization to $\|{\bm u}_*\|_2<1$; the other branch can be treated analogously.
\end{remark}

From \eqref{16mat2D}, we see that the solution to $\widehat{\mathcal{L}}_{ij}$ is the maximum of $2^4=16$ maximal eigenvalues of associated matrices. Let ${\bm s}:=(s_1,s_2,s_3,s_4)\in\{0,1\}^4$ be an index vector, and $\widehat{A}_{\bm s}:=s_1\widehat{A}_x^++s_2\widehat{A}_x^-+s_3\widehat{A}_y^++s_4\widehat{A}_y^-$ contains all 16 possible combinations that make a difference in solving \eqref{16mat2D}. The basic model of $\widehat{A}_{\bm s}$ can be expressed as
\begin{equation}\label{basicmodel}
    \widehat{A}_{\bm s}=
    \begin{pmatrix}
        \frac{d}{a}\mathcal{I}_{2\times2} & \frac{d{\bm b}-a{\bm e}}{a\sqrt{4ac-|{\bm b}|^2}} \\
        \frac{d{\bm b}^\top-a{\bm e}^\top}{a\sqrt{4ac-|{\bm b}|^2}} & 
        \frac{4a^2f-2a{\bm b}\cdot{\bm e}+|{\bm b}|^2d}{a(4ac-|{\bm b}|^2)}
    \end{pmatrix} = 
    \begin{pmatrix}
        \frac{d}{a} & 0 & \frac{b_1d-ae_1}{a\sqrt{4ac-|{\bm b}|^2}} \\
        0 & \frac{d}{a} & \frac{b_2d-ae_2}{a\sqrt{4ac-|{\bm b}|^2}} \\
        \frac{b_1d-ae_1}{a\sqrt{4ac-|{\bm b}|^2}} & 
        \frac{b_2d-ae_2}{a\sqrt{4ac-|{\bm b}|^2}} &
        \frac{4a^2f-2a{\bm b}\cdot{\bm e}+|{\bm b}|^2d}{a(4ac-|{\bm b}|^2)}
    \end{pmatrix},
\end{equation}
where ${\bm b}=(b_1,b_2)^\top,\ {\bm e}=(e_1,e_2)^\top.$
Three eigenvalues of $\widehat{A}_{\bm s}$ are
{\begin{equation}\label{basicmodel:evalue}
    \widehat{\lambda}_{\bm s}^{(k)} = \frac{(-1)^{k-1}\sigma_{\bm s}+2af-{\bm b}\cdot{\bm e}+2cd}{4ac-|{\bm b}|^2}\ \ (k=1,2),\quad
    \widehat{\lambda}_{\bm s}^{(3)} = \frac{d}{a},
\end{equation}}
where
\begin{equation} \label{sigma2D}
    \sigma_{\bm s}:=\sqrt{4\Big((af-cd)^2-(af+cd)({\bm b}\cdot{\bm e})+ac|{\bm e}|^2+|{\bm b}|^2df\Big)-(b_1e_2-b_2e_1)^2}.
\end{equation}
{
The non-negativity of the radicand in \eqref{sigma2D} follows directly from the symmetric block structure of \eqref{basicmodel}. Let
\[
\Delta:=4ac-|{\bm b}|^2>0,\qquad
{\bm h}:=\frac{d{\bm b}-a{\bm e}}{a\sqrt{\Delta}},\qquad
 g:=\frac{4a^2f-2a{\bm b}\cdot{\bm e}+|{\bm b}|^2d}{a\Delta}.
\]
Then \eqref{basicmodel} has the eigenvalue $d/a$ in the subspace orthogonal to ${\bm h}$ (with the obvious multiplicity-two degeneracy if ${\bm h}={\bf 0}$), and its two remaining eigenvalues are those of the real symmetric $2\times2$ matrix
\[
\begin{pmatrix} d/a & \|{\bm h}\|_2 \\ \|{\bm h}\|_2 & g \end{pmatrix}.
\]
Consequently,
\[
\sigma_{\bm s}^2=\frac{\Delta^2}{4}\left(\left(g-\frac{d}{a}\right)^2+4\|{\bm h}\|_2^2\right)\ge0,
\]
which is algebraically equivalent to the radicand in \eqref{sigma2D}. Hence $\sigma_{\bm s}$ is real and the eigenvalue formula \eqref{basicmodel:evalue} is well-defined for all admissible states.
}
Since $\sigma_{\bm s}\geq0$, the maximal eigenvalue of $\widehat{A}_{\bm s}$ is
\begin{equation*}
    \widehat{\lambda}_{\bm s}:=\max\left\{\widehat{\lambda}_{\bm s}^{(1)},\ \widehat{\lambda}_{\bm s}^{(3)}\right\}.
\end{equation*}
We are also interested in the optimizer ${\bm w} = ({\bm u}_*^\top, 1)^\top\in\mathbb{R}^3$ when dealing with the additional constraint $\|{\bm u}_*\|_2<1$. It can be found by multiplying the eigenvector ${\bm z}$ associated with the maximal eigenvalue of $\widehat{A}_{\bm s}$ since we apply the substitution ${\bm z}=R{\bm w}$ to express $\widehat{\mathcal{L}}_{ij}$ as a Rayleigh quotient.
\begin{align}
    {\bm w}_{\bm s}^{(1)} &= \frac{1}{\sqrt{c-\frac{|{\bm b}|^2}{4a}}}\left(\frac{{\bm b}^\top}{2a}+\frac{(d{\bm b}^\top-a{\bm e}^\top)\widetilde{\sigma}_{\bm s}^{(1)}}{2a\|a{\bm e}-d{\bm b}\|_2^2},\ 1\right)^\top,\label{basicmodel:evector1}\\
    {\bm w}_{\bm s}^{(2)} &= \frac{1}{\sqrt{c-\frac{|{\bm b}|^2}{4a}}}\left(\frac{{\bm b}^\top}{2a}-\frac{(d{\bm b}^\top-a{\bm e}^\top)\widetilde{\sigma}_{\bm s}^{(2)}}{2a\|a{\bm e}-d{\bm b}\|_2^2},\ 1\right)^\top,\label{basicmodel:evector2}\\
    {\bm w}_{\bm s}^{(3)} &= \frac{1}{\sqrt{a}}\left(-\frac{b_2d-ae_2}{b_1d-ae_1},\ 1,\ 0\right)^\top,\label{basicmodel:evector3}
\end{align}
where
{\begin{equation} \label{basicemodel:evec_sig}
    \widetilde{\sigma}_{\bm s}^{(k)} = a\sigma_{\bm s}+(-1)^k\left(|{\bm b}|^2d+2a^2f-2acd-a{\bm b}\cdot{\bm e}\right),\quad k = 1,2.
\end{equation}}
The maximal eigenvalue may not be a feasible optimal solution if its corresponding eigenvector ${\bm w} = ({\bm u}_*^\top, 1)^\top$ does not satisfy $\|{\bm u}_*\|_2<1$. We list this quantity for feasibility judgment for \eqref{basicmodel:evector1}, \eqref{basicmodel:evector2}, and \eqref{basicmodel:evector3}:
\begin{align}
    \left|{\bm u}_{\bm s}^{(1)}\right|^2&=\left(\frac{b_1}{2a}+\frac{(b_1d-ae_1)\widetilde{\sigma}_{\bm s}^{(1)}}{2a\|a{\bm e}-d{\bm b}\|_2^2}\right)^2+\left(\frac{b_2}{2a}+\frac{(b_2d-ae_2)\widetilde{\sigma}_{\bm s}^{(1)}}{2a\|a{\bm e}-d{\bm b}\|_2^2}\right)^2,\label{basicmodel:generalnorm1}\\
    \left|{\bm u}_{\bm s}^{(2)}\right|^2&=\left(\frac{b_1}{2a}-\frac{(b_1d-ae_1)\widetilde{\sigma}_{\bm s}^{(2)}}{2a\|a{\bm e}-d{\bm b}\|_2^2}\right)^2+\left(\frac{b_2}{2a}-\frac{(b_2d-ae_2)\widetilde{\sigma}_{\bm s}^{(2)}}{2a\|a{\bm e}-d{\bm b}\|_2^2}\right)^2,\label{basicmodel:generalnorm2}\\
    \left|{\bm u}_{\bm s}^{(3)}\right|^2&=\left(-\frac{b_2d-ae_2}{b_1d-ae_1}\right)^2+1^2\geq1,\ \text{and the last component of ${\bm w}_{\bm s}^{(3)}$ is not 1}.\label{basicmodel:generalnorm3}
\end{align}
All the above formulations correspond to the basic model of $\widehat{A}_{\bm s}$. {For a given index vector ${\bm s}_0:=(s_1,s_2,s_3,s_4)^\top$, we can obtain $\widehat{A}_{{\bm s}_0}$ by the corresponding choices of $d$, $e_1$, $e_2$, and $f$. To simplify the notation and make our presentation clearer, we introduce the base coefficient vectors:
\begin{equation*}
    {\bm V}_d:=
    \begin{pmatrix}
        \Delta y\ d_x^+\\
        -\Delta y\ d_x^-\\
        \Delta x\ d_y^+\\
        -\Delta x\ d_y^-
    \end{pmatrix},
    \quad
    {\bm V}_{e_1}:=
    \begin{pmatrix}
        \Delta y\ e_x^{+,1}\\
        -\Delta y\ e_x^{-,1}\\
        \Delta x\ e_y^{+,1}\\
        -\Delta x\ e_y^{-,1}
    \end{pmatrix},
    \quad
    {\bm V}_{e_2}:=
    \begin{pmatrix}
        \Delta y\ e_x^{+,2}\\
        -\Delta y\ e_x^{-,2}\\
        \Delta x\ e_y^{+,2}\\
        -\Delta x\ e_y^{-,2}
    \end{pmatrix},
    \quad
    {\bm V}_f:=
    \begin{pmatrix}
        \Delta y\ f_x^+\\
        -\Delta y\ f_x^-\\
        \Delta x\ f_y^+\\
        -\Delta x\ f_y^-
    \end{pmatrix}.
\end{equation*}
Then the coefficients $d,e_1,e_2,f$ associated with the index vector ${\bm s}_0$ are given by
\begin{equation*}
    c_{\star} = {\bm s}_0\cdot{\bm V}_{c_{\star}} = \sum\limits_{i=1}^4s_i\ ({\bm V}_{c_{\star}})_i,\qquad c_{\star}\in\{d,e_1,e_2,f\}.
\end{equation*}
This compact representation unifies all 16 possible sign combinations.
}

Before the end, a special case should be verified. If the eigenvectors \eqref{basicmodel:evector1} and \eqref{basicmodel:evector2} are not well defined, i.e., $a{\bm e}-d{\bm b}={\bf 0}$, we should check whether it is consistent with the general formulation. In this special case, the basic model $\widehat{A}_{\bm s}$ degenerates into
\begin{equation}\label{basicmodel:special}
    \widehat{A}_{\bm s}=
    \begin{pmatrix}
        \frac{d}{a} & 0 & 0 \\
        0 & \frac{d}{a} & 0 \\
        0 & 0 & \frac{4a^2f-2a{\bm b}\cdot{\bm e}+|{\bm b}|^2d}{a(4ac-|{\bm b}|^2)}
    \end{pmatrix}.
\end{equation}
Three eigenvalues of $\widehat{A}_{\bm s}$ are
\begin{equation}\label{basicmodel:specialevalue}
    \widehat{\lambda}_{\bm s}^{(1)} = \frac{4af-{\bm b}\cdot{\bm e}}{4ac-|{\bm b}|^2},\quad
    \widehat{\lambda}_{\bm s}^{(2)} = \frac{d}{a},\quad
    \widehat{\lambda}_{\bm s}^{(3)} = \frac{d}{a},
\end{equation}
and the maximal eigenvalue is given by
\begin{equation*}
    \widehat{\lambda}_{\bm s} = 
    \begin{cases}
        \widehat{\lambda}_{\bm s}^{(1)} = \frac{4af-{\bm b}\cdot{\bm e}}{4ac-|{\bm b}|^2} & \text{if}\ af>cd\\
        \widehat{\lambda}_{\bm s}^{(2)} = \widehat{\lambda}_{\bm s}^{(3)} = \frac{d}{a} & \text{otherwise}
    \end{cases}.
\end{equation*}
The associated (scaled) eigenvectors are
\begin{align}
    {\bm w}_{\bm s}^{(1)} &= \frac{1}{\sqrt{c-\frac{|{\bm b}|^2}{4a}}}\left(\frac{{\bm b}^\top}{2a},\ 1\right)^\top,\label{basicmodel:specialevector1}\\
    {\bm w}_{\bm s}^{(2)} &= \frac{1}{\sqrt{a}}\left(0,\ 1,\ 0\right)^\top,\label{basicmodel:specialevector2}\\
    {\bm w}_{\bm s}^{(3)} &= \frac{1}{\sqrt{a}}\left(1,\ 0,\ 0\right)^\top.\label{basicmodel:specialevector3}
\end{align}
We also list the quantity for feasibility judgment for \eqref{basicmodel:specialevector1}, \eqref{basicmodel:specialevector2}, and \eqref{basicmodel:specialevector3}:
\begin{align}
    \left|{\bm u}_{\bm s}^{(1)}\right|^2 &= \left(\frac{b_1}{2a}\right)^2+\left(\frac{b_2}{2a}\right)^2=\frac{|{\bm b}|^2}{4a^2},\label{basicmodel:specialnorm1}\\
    \left|{\bm u}_{\bm s}^{(2)}\right|^2 &= 1 \geq 1 \ \text{and the last component of ${\bm w}_{\bm s}^{(2)}$ is not 1},\label{basicmodel:specialnorm2}\\
    \left|{\bm u}_{\bm s}^{(3)}\right|^2 &= 1 \geq 1 \ \text{and the last component of ${\bm w}_{\bm s}^{(3)}$ is not 1}.\label{basicmodel:specialnorm3}
\end{align}
However, computing 16 eigenvalues may be too expensive and complex, and there exist $2^6=64$ cases if we extend \eqref{16mat2D} to the three-dimensional case, which are impractical in application. 
One relaxation approach is to divide $\widehat{\mathcal{L}}_{ij}$ into two parts, corresponding to two one-dimensional cases that we have already solved. Note that
\begin{align*}
    \widehat{\mathcal{L}}_{ij}&=\max\limits_{{\bm z}\in\mathbb{R}^3,\ \Vert {\bm z} \Vert_2=1 \atop {\|{\bm u}_*\|_2<1} }
    \Big\{\max\left(0,{\bm z}^\top \widehat{A}_x^+{\bm z}\right)
         +\max\left(0,{\bm z}^\top \widehat{A}_x^-{\bm z}\right)
         +\max\left(0,{\bm z}^\top \widehat{A}_y^+{\bm z}\right)
         +\max\left(0,{\bm z}^\top \widehat{A}_y^-{\bm z}\right)\Big\}\\
    &\leq \max\limits_{{\bm z}\in\mathbb{R}^3,\ \Vert {\bm z} \Vert_2=1 \atop {\|{\bm u}_*\|_2<1} }
    \Big\{\max\left(0,{\bm z}^\top \widehat{A}_x^+{\bm z}\right)
         +\max\left(0,{\bm z}^\top \widehat{A}_x^-{\bm z}\right)\Big\}+
    \max\limits_{{\bm z}\in\mathbb{R}^3,\ \Vert {\bm z} \Vert_2=1 \atop {\|{\bm u}_*\|_2<1} }
    \Big\{\max\left(0,{\bm z}^\top \widehat{A}_y^+{\bm z}\right)
         +\max\left(0,{\bm z}^\top \widehat{A}_y^-{\bm z}\right)\Big\}.
\end{align*}
For convenience, we denote
\begin{equation*}
    \widehat{\mathcal{L}}_{ij}^{x}:=\max\limits_{{\bm z}\in\mathbb{R}^3,\ \Vert {\bm z} \Vert_2=1,\ {\|{\bm u}_*\|_2<1} }
    \Big\{\max\left(0,{\bm z}^\top \widehat{A}_x^+{\bm z}\right)
         +\max\left(0,{\bm z}^\top \widehat{A}_x^-{\bm z}\right)\Big\}
    =\max\left\{0,\ \widehat{\lambda}_x^+,\ \widehat{\lambda}_x^-,\ \widehat{\lambda}_x^*\right\},
\end{equation*}
\begin{equation*}
    \widehat{\mathcal{L}}_{ij}^{y}:=\max\limits_{{\bm z}\in\mathbb{R}^3,\ \Vert {\bm z} \Vert_2=1,\ {\|{\bm u}_*\|_2<1} }
    \Big\{\max\left(0,{\bm z}^\top \widehat{A}_y^+{\bm z}\right)
         +\max\left(0,{\bm z}^\top \widehat{A}_y^-{\bm z}\right)\Big\}
    =\max\left\{0,\ \widehat{\lambda}_y^+,\ \widehat{\lambda}_y^-,\ \widehat{\lambda}_y^*\right\},
\end{equation*}
where $\lambda_s^+$, $\lambda_s^-$, and $\lambda_s^*$ represent the maximal eigenvalues of $\widehat{A}_s^+$, $\widehat{A}_s^-$, and $\widehat{A}_s^++\widehat{A}_s^-$ ($s=x$ or $y$), respectively. We then obtain the final relaxation formulation
\begin{equation} \label{relax2D}
    \widehat{\mathcal{L}}_{ij}^{\mathcal{R}}:=\widehat{\mathcal{L}}_{ij}^{x}+\widehat{\mathcal{L}}_{ij}^{y}=
    \max\left\{0,\ \widehat{\lambda}_x^+,\ \widehat{\lambda}_x^-,\ \widehat{\lambda}_x^*\right\}+\max\left\{0,\ \widehat{\lambda}_y^+,\ \widehat{\lambda}_y^-,\ \widehat{\lambda}_y^*\right\},
\end{equation}
which is significantly more efficient than \eqref{16mat2D}.
{Here $\widehat{\mathcal{L}}$ denotes the maximal denominator ratio in the optimization problem, not the limiter coefficient itself. Thus, for the relaxed 2D estimator, the corresponding cell-wise $q$-limiting factor is
\[
\mathcal{L}_{ij}^{q,\mathcal R}:=
\begin{cases}
1, & \widehat{\mathcal{L}}_{ij}^{\mathcal R}=0,\\[1mm]
\min\left\{1,\dfrac{\Delta x\Delta y}{\Delta t}\left(\widehat{\mathcal{L}}_{ij}^{\mathcal R}\right)^{-1}\right\}, & \widehat{\mathcal{L}}_{ij}^{\mathcal R}>0,
\end{cases}
\]
with the exact estimator obtained by replacing $\widehat{\mathcal{L}}_{ij}^{\mathcal R}$ by the full 16-matrix maximum in \eqref{16mat2D}. The interface factors are then assigned by the usual adjacent-cell minimum, e.g., $\theta^q_{i+1/2,j}=\min\{\mathcal{L}_{ij}^{q},\mathcal{L}_{i+1,j}^{q}\}$ and analogously in the $y$ direction.}
\begin{remark}[On the relaxed multi-dimensional estimators]\label{rem:relax-md}
The relaxed estimators \eqref{relax2D} and \eqref{relax3D} are \emph{sufficient} for PCP enforcement.
Indeed, they replace the exact maximization over all sign combinations by a sum of directional upper bounds, which yields a conservative underestimate of the exact admissible factor (and hence a more restrictive limiter).
As a result, the PCP proof remains valid, while the number of eigenvalue evaluations is substantially reduced; see Section~\ref{sec:cost} {for the trade-off analysis of computational cost and numerical dissipation}.
\end{remark}

Finally, we briefly explain how to implement our flux limiter in the three-dimensional RHD, which is very similar to the two-dimensional case, and only the different parts will be listed. 
The FCT finite difference scheme for three-dimensional uniform rectangular meshes can be formulated by
\begin{equation}\label{uLF3DRHD}
    {\bf U}_{ijk}^{L} := {\bf U}_{ijk}^n
-\frac{\Delta t}{\Delta x}\left(\widehat{\mathcal{F}}_{i+\frac{1}{2},j,k}^L-\widehat{\mathcal{F}}_{i-\frac{1}{2},j,k}^L\right)
-\frac{\Delta t}{\Delta y}\left(\widehat{\mathcal{G}}_{i,j+\frac{1}{2},k}^L-\widehat{\mathcal{G}}_{i,j-\frac{1}{2},k}^L\right)
-\frac{\Delta t}{\Delta z}\left(\widehat{\mathcal{H}}_{i,j,k+\frac{1}{2}}^L-\widehat{\mathcal{H}}_{i,j,k-\frac{1}{2}}^L\right),
\end{equation}
\begin{align*}
    {\bf U}_{ijk}^{n+1} = {\bf U}_{ijk}^L
                        &-\frac{\Delta t}{\Delta x}\left(\theta_{i+\frac{1}{2},j,k}\widehat{\mathcal{F}}_{i+\frac{1}{2},j,k}^A-\theta_{i-\frac{1}{2},j,k}\widehat{\mathcal{F}}_{i-\frac{1}{2},j,k}^A\right)\\
                        &-\frac{\Delta t}{\Delta y}\left(\theta_{i,j+\frac{1}{2},k}\widehat{\mathcal{G}}_{i,j+\frac{1}{2},k}^A-\theta_{i,j-\frac{1}{2},k}\widehat{\mathcal{G}}_{i,j-\frac{1}{2},k}^A\right)
                        -\frac{\Delta t}{\Delta z}\left(\theta_{i,j,k+\frac{1}{2}}\widehat{\mathcal{H}}_{i,j,k+\frac{1}{2}}^A-\theta_{i,j,k-\frac{1}{2}}\widehat{\mathcal{H}}_{i,j,k-\frac{1}{2}}^A\right),
\end{align*}
where ${\widehat{\mathcal{F}}}$, ${\widehat{\mathcal{G}}}$, and ${\widehat{\mathcal{H}}}$ represent the numerical fluxes in $x$, $y$, and $z$ direction, respectively. 
The three-dimensional case can share the same framework as the two-dimensional one, except for the definitions of the following quantities related to the spatial dimension
\begin{align*}
    P^{-}_{ijk} &= \Delta y\Delta z\left[\min\left(0, -\widehat{\mathcal{F}}^{A}_{i+\frac{1}{2},j,k}\cdot{\bm n}_*\right) + \min\left(0, \widehat{\mathcal{F}}^{A}_{i-\frac{1}{2},j,k}\cdot{\bm n}_*\right)\right]\\
                &+\Delta x\Delta z\left[\min\left(0, -\widehat{\mathcal{G}}^{A}_{i,j+\frac{1}{2},k}\cdot{\bm n}_*\right) + \min\left(0, \widehat{\mathcal{G}}^{A}_{i,j-\frac{1}{2},k}\cdot{\bm n}_*\right)\right]\\
                &+\Delta x\Delta y\left[\min\left(0, -\widehat{\mathcal{H}}^{A}_{i,j,k+\frac{1}{2}}\cdot{\bm n}_*\right) + \min\left(0, \widehat{\mathcal{H}}^{A}_{i,j,k-\frac{1}{2}}\cdot{\bm n}_*\right)\right].\\
    Q^{-}_{ijk} &= \frac{\Delta x\Delta y\Delta z}{\Delta t}\left(\epsilon - {\bf U}^{L}_{ijk}\cdot{\bm n}_*\right),\quad {\bf U}^{L}_{ijk}\ \text{is defined in }\eqref{uLF3DRHD}.\\
    R^{-}_{ijk} &= 
    \begin{cases}
        \min\left(1, \frac{Q^{-}_{ijk}}{P^{-}_{ijk}}\right), & \text{if}\quad P^{-}_{ijk} < 0,\\
        1,& \text{otherwise}.
    \end{cases}
\end{align*}

The formulations of the model matrix $\widehat{A}_{\bm s}$ and its associated eigenpairs share the same form as \eqref{basicmodel}, \eqref{basicmodel:evalue}, \eqref{basicmodel:evector1}, \eqref{basicmodel:evector2}, and \eqref{basicmodel:evector3} but only differ in the size of ${\bm b}$ and ${\bm e}$. 
\begin{equation*}
    \widehat{A}_{\bm s}=
    \begin{pmatrix}
        \frac{d}{a}\mathcal{I}_{3\times 3} & \frac{d{\bm b}-a{\bm e}}{a\sqrt{4ac-|{\bm b}|^2}} \\
        \frac{d{\bm b}^\top-a{\bm e}^\top}{a\sqrt{4ac-|{\bm b}|^2}} & 
        \frac{4a^2f-2a{\bm b}\cdot{\bm e}+|{\bm b}|^2d}{a(4ac-|{\bm b}|^2)}
    \end{pmatrix} =
    \begin{pmatrix}
        \frac{d}{a} & 0 & 0 & \frac{b_1d-ae_1}{a\sqrt{4ac-|{\bm b}|^2}} \\
        0 & \frac{d}{a} & 0 & \frac{b_2d-ae_2}{a\sqrt{4ac-|{\bm b}|^2}} \\
        0 & 0 & \frac{d}{a} & \frac{b_3d-ae_3}{a\sqrt{4ac-|{\bm b}|^2}} \\
        \frac{b_1d-ae_1}{a\sqrt{4ac-|{\bm b}|^2}} & 
        \frac{b_2d-ae_2}{a\sqrt{4ac-|{\bm b}|^2}} &
        \frac{b_3d-ae_3}{a\sqrt{4ac-|{\bm b}|^2}} &
        \frac{4a^2f-2a{\bm b}\cdot{\bm e}+|{\bm b}|^2d}{a(4ac-|{\bm b}|^2)}
    \end{pmatrix},
\end{equation*}
where ${\bm b}=(b_1,b_2,b_3)^\top,\ {\bm e}=(e_1,e_2,e_3)^\top.$
\begin{table}[t]
    \centering
    \caption{Summary of the cost of the $q$-limiter parameter estimation per cell. ``Exact'' refers to the estimator based on the full maximization in Lemma~\ref{lem:RayleighQ}; ``relaxed'' refers to \eqref{relax2D} in 2D and \eqref{relax3D} in 3D. }
    \label{tab:cost}
    \begin{tabular}{cccc}
        \toprule
        Dimension & matrix size & \# eigenproblems (exact) & \# eigenproblems (relaxed)\\
        \midrule
        1D & $2\times2$ & $3$  & --\\
        2D & $3\times3$ & $16$ & $6$\\
        3D & $4\times4$ & $64$ & $9$\\
        \bottomrule
    \end{tabular}
\end{table}
There is one more repeated eigenvalue compared with \eqref{basicmodel:evalue}:
{\begin{equation*}
    \widehat{\lambda}_{\bm s}^{(k)} = \frac{(-1)^{k-1}\sigma_{\bm s}+2af-{\bm b}\cdot{\bm e}+2cd}{4ac-|{\bm b}|^2}\ \ (k=1,2),\quad
    \widehat{\lambda}_{\bm s}^{(3)} = \widehat{\lambda}_{\bm s}^{(4)} = \frac{d}{a},
\end{equation*}}
where
\begin{align}
    &\sigma_{\bm s}:=\sqrt{4\Big((af-cd)^2-(af+cd)({\bm b}\cdot{\bm e})+ac|{\bm e}|^2+|{\bm b}|^2df\Big)-{|{\bm b}\times{\bm e}|^2}},\label{sigma3D}\\
    &{{\bm b}\times{\bm e}=\left(b_2e_3-b_3e_2,\ b_3e_1-b_1e_3,\ b_1e_2-b_2e_1\right)^\top.\nonumber}
\end{align}
{Similarly, the radicand of $\sigma_{\bm s}$ in \eqref{sigma3D} is non-negative by the same block-structure argument. With $\Delta:=4ac-|{\bm b}|^2>0$, ${\bm h}:=(d{\bm b}-a{\bm e})/(a\sqrt{\Delta})$, and $g:=(4a^2f-2a{\bm b}\cdot{\bm e}+|{\bm b}|^2d)/(a\Delta)$, the $4\times4$ matrix above has the repeated eigenvalue $d/a$ on the two-dimensional subspace orthogonal to ${\bm h}$, while the two non-repeated eigenvalues are determined by the same real symmetric $2\times2$ block with entries $d/a$, $\|{\bm h}\|_2$, and $g$. Thus
\[
\sigma_{\bm s}^2=\frac{\Delta^2}{4}\left(\left(g-\frac{d}{a}\right)^2+4\|{\bm h}\|_2^2\right)\ge0,
\]
which is equivalent to the radicand in \eqref{sigma3D}. This guarantees the well-posedness of the 3D eigenvalue formulas.}
The associated eigenvectors are
\begin{align*}
    {\bf w}_{\bf s}^{(1)} &= \frac{1}{\sqrt{c-\frac{|{\bm b}|^2}{4a}}}\left(
    \frac{b_1}{2a}+\frac{(b_1d-ae_1)\widetilde{\sigma}_{\bf s}^{(1)}}{2a\|a{\bm e}-d{\bm b}\|_2^2},
    \frac{b_2}{2a}+\frac{(b_2d-ae_2)\widetilde{\sigma}_{\bf s}^{(1)}}{2a\|a{\bm e}-d{\bm b}\|_2^2},
    \frac{b_3}{2a}+\frac{(b_3d-ae_3)\widetilde{\sigma}_{\bf s}^{(1)}}{2a\|a{\bm e}-d{\bm b}\|_2^2},
    1\right)^\top,\\
    {\bf w}_{\bf s}^{(2)} &= \frac{1}{\sqrt{c-\frac{|{\bm b}|^2}{4a}}}\left(
    \frac{b_1}{2a}-\frac{(b_1d-ae_1)\widetilde{\sigma}_{\bf s}^{(2)}}{2a\|a{\bm e}-d{\bm b}\|_2^2},
    \frac{b_2}{2a}-\frac{(b_2d-ae_2)\widetilde{\sigma}_{\bf s}^{(2)}}{2a\|a{\bm e}-d{\bm b}\|_2^2},
    \frac{b_3}{2a}-\frac{(b_3d-ae_3)\widetilde{\sigma}_{\bf s}^{(2)}}{2a\|a{\bm e}-d{\bm b}\|_2^2},
    1\right)^\top,\\
    {\bf w}_{\bf s}^{(3)} &= \frac{1}{\sqrt{a}}\left(-\frac{b_2d-ae_2}{b_1d-ae_1},1,0,0\right)^\top,\\
    {\bf w}_{\bf s}^{(4)} &= \frac{1}{\sqrt{a}}\left(-\frac{b_3d-ae_3}{b_1d-ae_1},0,1,0\right)^\top,
\end{align*}
where $\widetilde{\sigma}_{\bf s}^{(1)}$ and $\widetilde{\sigma}_{\bf s}^{(2)}$ share the same definition in \eqref{basicemodel:evec_sig}.
The judgment for the feasibility of eigenvectors and the special case of $\widehat{A}_{\bm s}$ are very similar to the two-dimensional case, and hence we omit them here.

Since solving $2^6=64$ Rayleigh quotients is impractical, we suggest using the relaxation approach similar to \eqref{relax2D}, which is also adopted and shown in the numerical results.
\begin{equation}\label{relax3D}
    \widehat{\mathcal{L}}_{ijk}^{\mathcal{R}}:=
    \sum_{s\in\{x,y,z\}}\max\Bigl\{0,\ \widehat{\lambda}_{\bf s}^{+},\ \widehat{\lambda}_{\bf s}^{-},\ \widehat{\lambda}_{\bf s}^{*}\Bigr\},
\end{equation}
where $\widehat{\lambda}_{\bf s}^{+}$, $\widehat{\lambda}_{\bf s}^{-}$ and $\widehat{\lambda}_{\bf s}^{*}$ denote the maximal eigenvalues of $\widehat{A}_{\bf s}^{+}$, $\widehat{A}_{\bf s}^{-}$ and $\widehat{A}_{\bf s}^{+}+\widehat{A}_{\bf s}^{-}$, respectively.
The relaxed estimator \eqref{relax3D} reduces the number of eigenvalue evaluations from $2^6=64$ to $3\times 3=9$ per cell, and the corresponding 3D cell-wise factor is obtained analogously as $\mathcal{L}_{ijk}^{q,\mathcal R}=1$ when $\widehat{\mathcal{L}}_{ijk}^{\mathcal R}=0$, and otherwise $\mathcal{L}_{ijk}^{q,\mathcal R}=\min\{1,(\Delta x\Delta y\Delta z)/(\Delta t\,\widehat{\mathcal{L}}_{ijk}^{\mathcal R})\}$.

\subsection{Computational cost and implementation remarks}\label{sec:cost}
The new estimator reduces the determination of the $q$-limiting factor to a small number of symmetric eigenvalue problems of size $(d{+}1)\times(d{+}1)$.
In 1D, the maximization is reduced to three $2\times2$ eigenvalue problems, and in 2D and 3D, the exact estimators require $2^4=16$ and $2^6=64$ eigenvalue evaluations per cell, respectively.
The relaxed variants \eqref{relax2D} and \eqref{relax3D} significantly reduce this number (see Table \ref{tab:cost} for details) while remaining robust in all tests reported in Section~\ref{Sec: Results}.

In addition, the symmetric matrices arising from the GQL parameterization have a special low-rank structure, and their maximal eigenvalues (and eigenvectors when needed) can be evaluated by closed-form formulas; see \eqref{1Deig1}--\eqref{1Deig2} in 1D and \eqref{basicmodel:evalue} and \eqref{basicmodel:evector1} in 2D/3D.
This avoids iterative root finding and makes the limiter well suited for large-scale simulations.

{As noted in Remark~\ref{rem:relax-md}, the relaxed estimators are conservative lower bounds on the exact limiting factor, which introduces a small amount of additional numerical dissipation in theory. However, extensive numerical benchmarks in Section~\ref{Sec: Results} demonstrate that this dissipation is negligible or acceptable in practice. This minimal accuracy loss is more than offset by the dramatic reduction in computational cost. }

This completes the description of our PCP flux limiting framework. To facilitate reproducibility, \ref{appendix:code} provides complete pseudocode, and Section \ref{Sec: Results} lists all computational parameters, including grids, CFL numbers, final times, etc., used in the experiments.


\section{Numerical results} \label{Sec: Results}
\setcounter{equation}{0}

\begin{table}[!thb] 
    \renewcommand{\arraystretch}{1.5}
    \centering
    \belowrulesep=0pt
    \aboverulesep=0pt
    \caption{Numerical errors and convergence rates in $\rho$ at different grid resolutions.}
    \label{table:Ex5.2.0}
    \setlength{\tabcolsep}{3mm}{
        \begin{tabular}{c|cccccc}
            \toprule[1.5pt]
            \multirow{2}{*}{Flux limiter } &
            \multirow{2}{*}{$N_x\times N_y$} &
            \multicolumn{2}{c}{$L^1$ norm} &
            \multicolumn{2}{c}{$L^2$ norm}\\
            \cmidrule(r){3-4} \cmidrule(r){5-6}
            & & error & order &  error & order\\    
            \midrule[1.5pt]
            \multirow{6}{*}{Without relaxation \eqref{16mat2D}} & $8\times8$ & 1.7455e-02 & -- &  1.8592e-02 & --    \\
            & $16\times16$ & 8.6992e-04 & 4.3266 & 1.0609e-03 & 4.1313 \\
            & $32\times32$ & 2.8496e-05 & 4.9320 & 3.3626e-05 & 4.9795 \\
            & $64\times64$ & 8.6622e-07 & 5.0399 & 1.0034e-06 & 5.0667 \\
            & $128\times128$ & 2.6647e-08 & 5.0227 & 3.0419e-08 & 5.0437 \\
            & $256\times256$ & 8.3183e-10 & 5.0015 & 9.3642e-10 & 5.0217 \\
            \midrule
            \multirow{6}{*}{With relaxation \eqref{relax2D}} & $8\times8$ & 1.7455e-02 & -- &  1.8592e-02 & --    \\
            & $16\times16$ & 8.6992e-04 & 4.3266 & 1.0609e-03 & 4.1313 \\
            & $32\times32$ & 2.8496e-05 & 4.9320 & 3.3626e-05 & 4.9795 \\
            & $64\times64$ & 8.6622e-07 & 5.0399 & 1.0034e-06 & 5.0667 \\
            & $128\times128$ & 2.6647e-08 & 5.0227 & 3.0419e-08 & 5.0437 \\
            & $256\times256$ & 8.3183e-10 & 5.0015 & 9.3642e-10 & 5.0217 \\
            \bottomrule[1.5pt]
        \end{tabular}
    }
\end{table}

In this section, we present a series of numerical experiments to validate the performance of {the proposed flux-limited high-order accurate method} for the one-dimensional (1D), two-dimensional (2D), and three-dimensional (3D) RHD. 
The primary objective is to verify that the implemented flux limiter preserves the designed spatial accuracy when coupled with the classical fifth-order WENO (WENO5) reconstruction. This is examined using smooth problems where the formal convergence order can be measured. 
A key focus of the tests is to demonstrate the robustness granted by the flux limiter's ability to enforce the admissible states of the relativistic system. Several challenging benchmark problems, including strong shocks, ultra-relativistic Riemann problems, and multi-dimensional jets, are simulated. Without the limiting procedure, the standard WENO5 would break down due to the generation of nonphysical solutions. Furthermore, comparison results will be shown between our novel flux limiter and the {\tt Wu-Tang} limiter \cite{WuTang2015}. These experiments, therefore, underscore the necessity of the PCP mechanism and simultaneously assess the resolution in capturing complex wave structures. 
For time integration, we employ the third-order strong-stability-preserving Runge--Kutta method.
The CFL number is set to 0.4, and the ideal equation of state \eqref{ID-EOS} is used in all cases, with the adiabatic index set to $\Gamma=5/3$, unless otherwise specified.

\begin{figure}[!htb]
	\centering
	\begin{subfigure}[t]{.45\textwidth}
		\centering
		\includegraphics[width=1\textwidth]{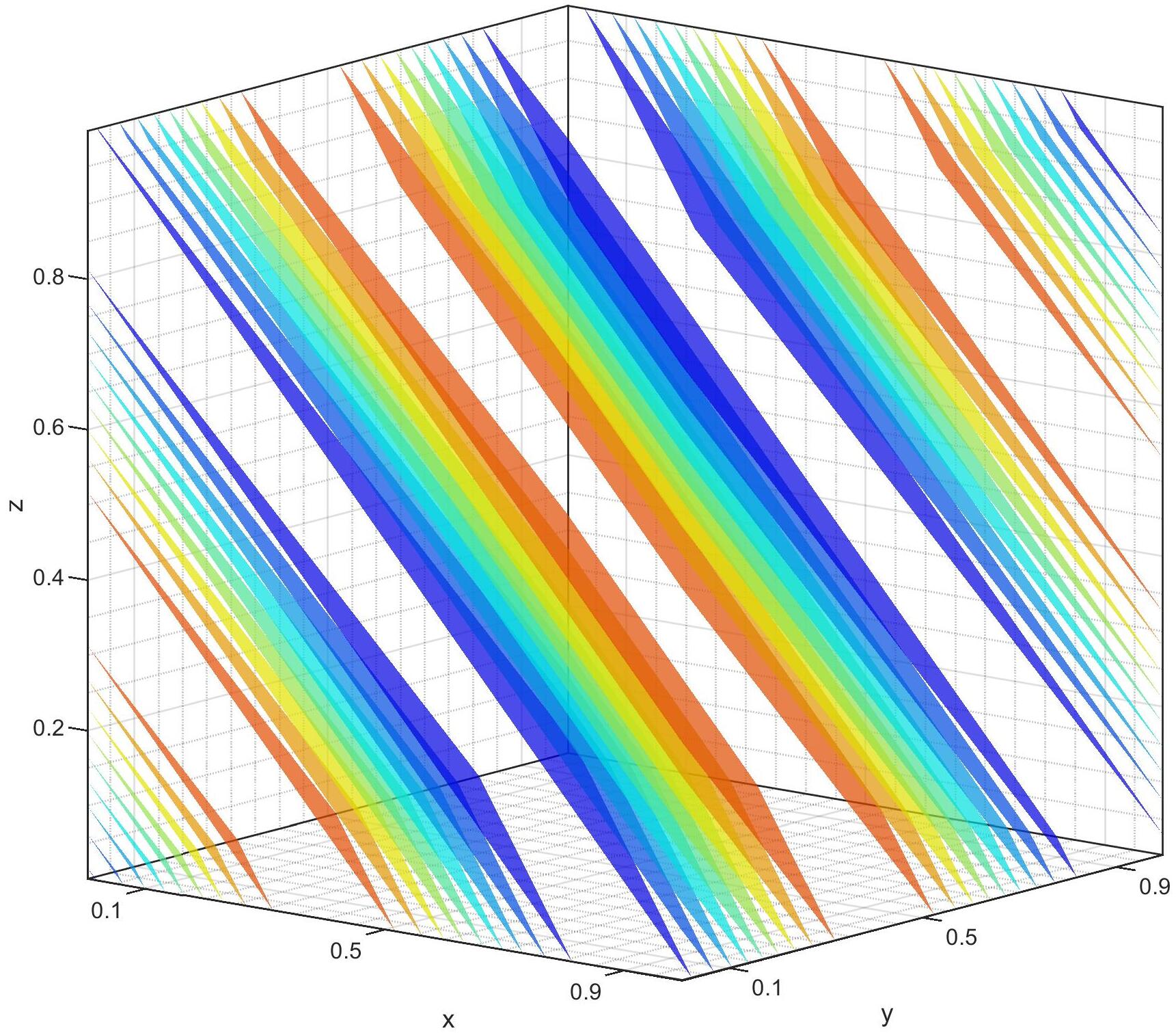}
	\end{subfigure}
    \begin{subfigure}[t]{.45\textwidth}
		\centering
		\includegraphics[width=1\textwidth]{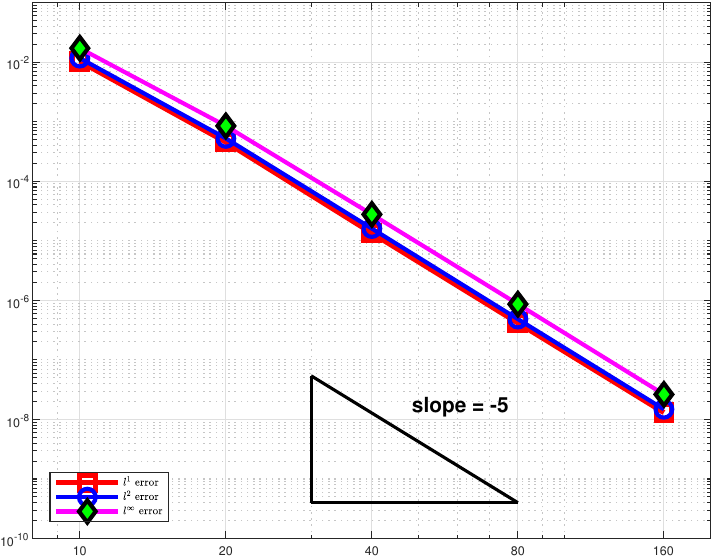}
	\end{subfigure}
	\caption{Example \ref{Ex5.2.0}: The numerical results obtained using WENO5 with the proposed PCP flux limiter. Left: 10 iso-surfaces of $\rho$ equally spaced from 0 to 2; right: $l^1$, $l^2$, and $l^{\infty}$ errors in $\rho$ at different grid resolutions.}\label{Fig:Ex6.2.0}
\end{figure}

\begin{example}[Accuracy test]\label{Ex5.2.0}
This example tests a smooth sine wave propagating periodically in the domain $[0,1]^k$, $k=2,3$, aiming to investigate the accuracy of our proposed scheme in two and three dimensions. {In the algorithm implementation, we explore two strategies for handling the flux limiter parameters: a non-relaxed version \eqref{16mat2D} and a relaxed version \eqref{relax2D}.} 
{In all smooth-solution convergence tests, we follow the standard WENO accuracy-verification strategy of Jiang and Shu \cite{jiang1996efficient}, also used in \cite{zhang2012positivity, WuTang2015}. Although the SSP-RK3 time discretization is third-order accurate, the time step is deliberately reduced according to
\[
\Delta t=C\left(\sum_{\ell=1}^k \frac{1}{\Delta x_\ell}\right)^{-p/q},\qquad p=5,\quad q=3,
\]
with $C=0.4$ and $k$ the space dimension of the smooth test. Therefore $O(\Delta t^3)=O(h^5)$ on uniform grids, so the temporal error does not mask the fifth-order spatial convergence being measured.}
For the 2D case, the initial conditions are given by 
$$
\textbf{V}(x,y,0) = \left(1+0.999\ {\rm sin}(2\pi(x+y)),\ 0.99/\sqrt{2},\ 0.99/\sqrt{2},\ 0.001\right)^\top,
$$
with the exact solution of the rest-mass density 
\begin{equation} \label{exact2D}
    \rho(x,y,t) = 1+0.999\ {\rm sin}\left(2\pi(x+y-0.99\sqrt{2}t)\right).
\end{equation}
Uniform grids are used with mesh size $\Delta x=\Delta y = \frac{1}{N}$, where $N \in \{8,16,32,64,128,256\}$.
The time step-size is chosen to match the spatial accuracy: $\Delta t = {0.4}\left({\frac{1}{\Delta x}+\frac{1}{\Delta y}}\right)^{-\frac{5}{3}}$. We report the $L^1$ and $L^2$ errors at $t = 0.1$ for the rest-mass density and corresponding orders obtained by the WENO5 scheme combined with flux limiters \eqref{16mat2D} and \eqref{relax2D} in Table \ref{table:Ex5.2.0}. The results show that {the proposed scheme} achieves fifth-order accuracy, consistent with its theoretical order. In this smooth setting the physical constraints are not active, and the computed limiting factors stay very close to one, so the limiter does not degrade the design accuracy. 
For the 3D case, the initial data are
$$
\textbf{V}(x,y,z,0) = \left(1+0.999\ {\rm sin}(2\pi(x+y+z)),\ 0.99/\sqrt{3},\ 0.99/\sqrt{3},\ 0.99/\sqrt{3},\ 0.001\right)^\top.
$$
The exact solution of the rest-mass density is
\begin{equation*}
    \rho(x,y,z,t) = 1+0.999\ {\rm sin}\left(2\pi(x+y+z-0.99\sqrt{3}t)\right).
\end{equation*}
{Due to the complexity of directly computing and comparing 64 eigenvalues in 3D, only the relaxed version of the flux limiter is employed here. }
To investigate the spatial accuracy, we set the mesh size as $\Delta x=\Delta y=\Delta z = \frac{1}{N}$ with varying  $N \in \{10,20,40,80,160\}$.
The time step is set as $\Delta t = {0.4}\left({\frac{1}{\Delta x}+\frac{1}{\Delta y}+\frac{1}{\Delta z}}\right)^{-\frac{5}{3}}$. In Fig. \ref{Fig:Ex6.2.0}, we present a 3D iso-surface plot of the rest-mass density using the numerical solution on a $160\times160\times160$ grid, and show the log-log graphs of the $L^1$, $L^2$, and $L^\infty$ errors versus $N$ at $t = 0.1$. The slopes of all error lines are close to -5, indicating that the proposed scheme exhibits fifth-order spatial convergence in 3D case, which aligns with theoretical expectations.
\end{example}

\begin{figure}[!htb]
	\centering
	\begin{subfigure}[t]{.48\textwidth}
		\centering
		\includegraphics[width=1\textwidth]{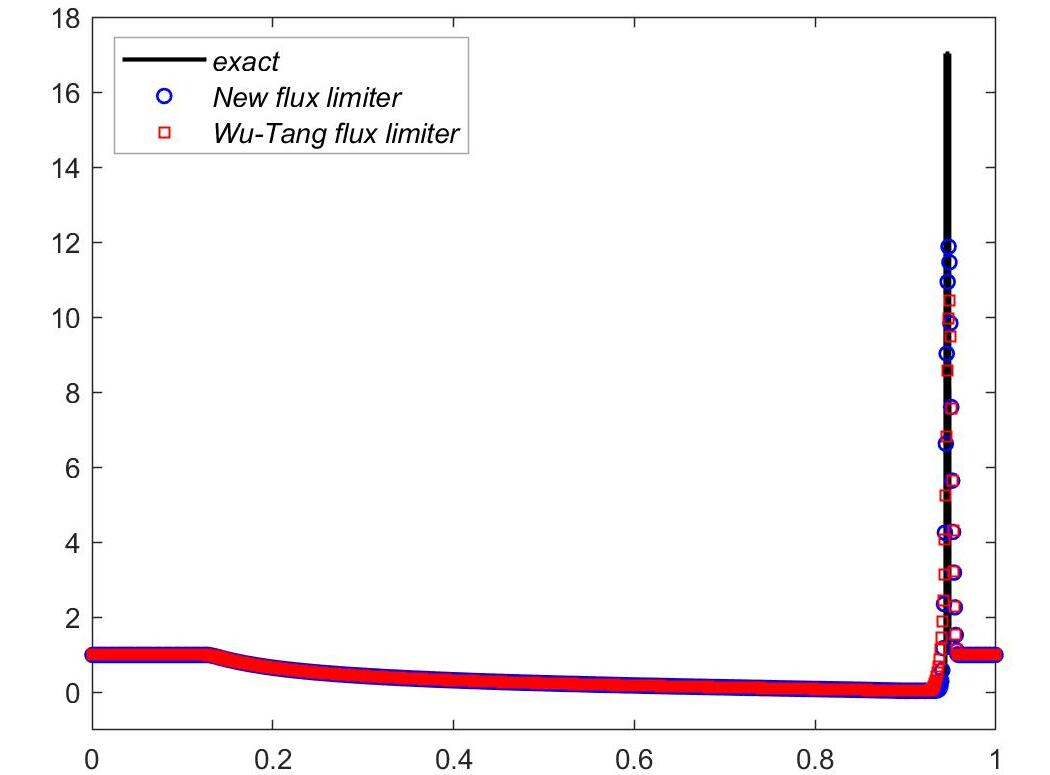}
		\caption{$\rho$}
	\end{subfigure}
    \begin{subfigure}[t]{.48\textwidth}
		\centering
		\includegraphics[width=1\textwidth]{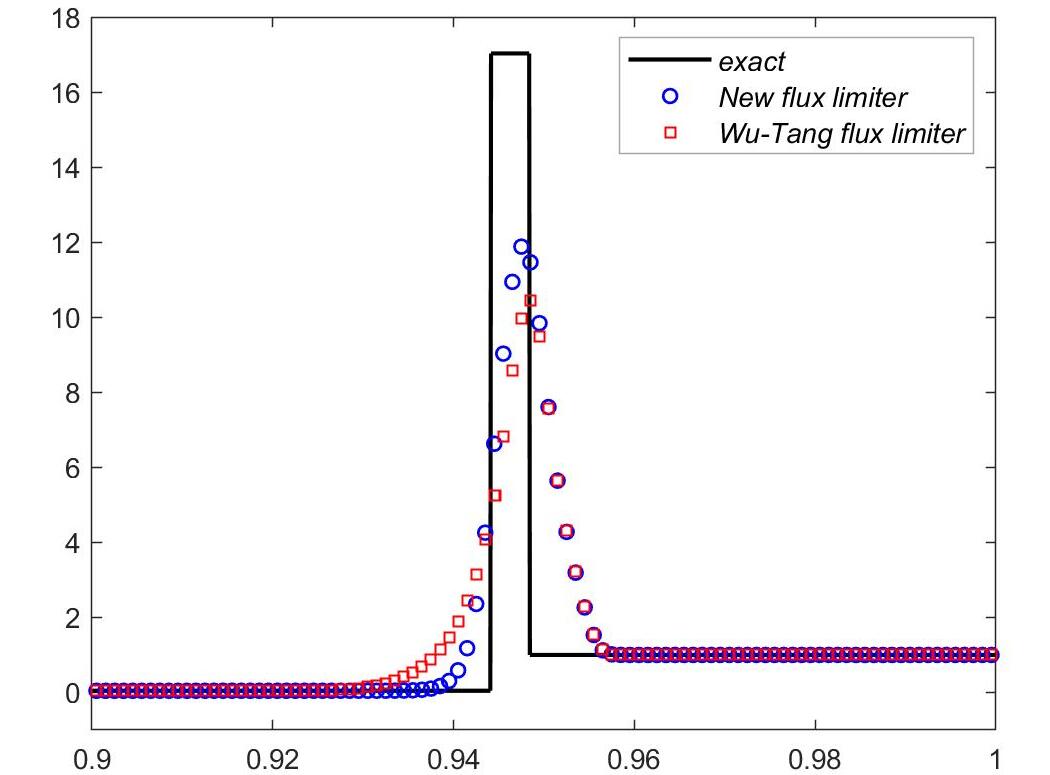}
		\caption{Close-up of $\rho$}
	\end{subfigure}
    \begin{subfigure}[t]{.48\textwidth}
		\centering
		\includegraphics[width=1\textwidth]{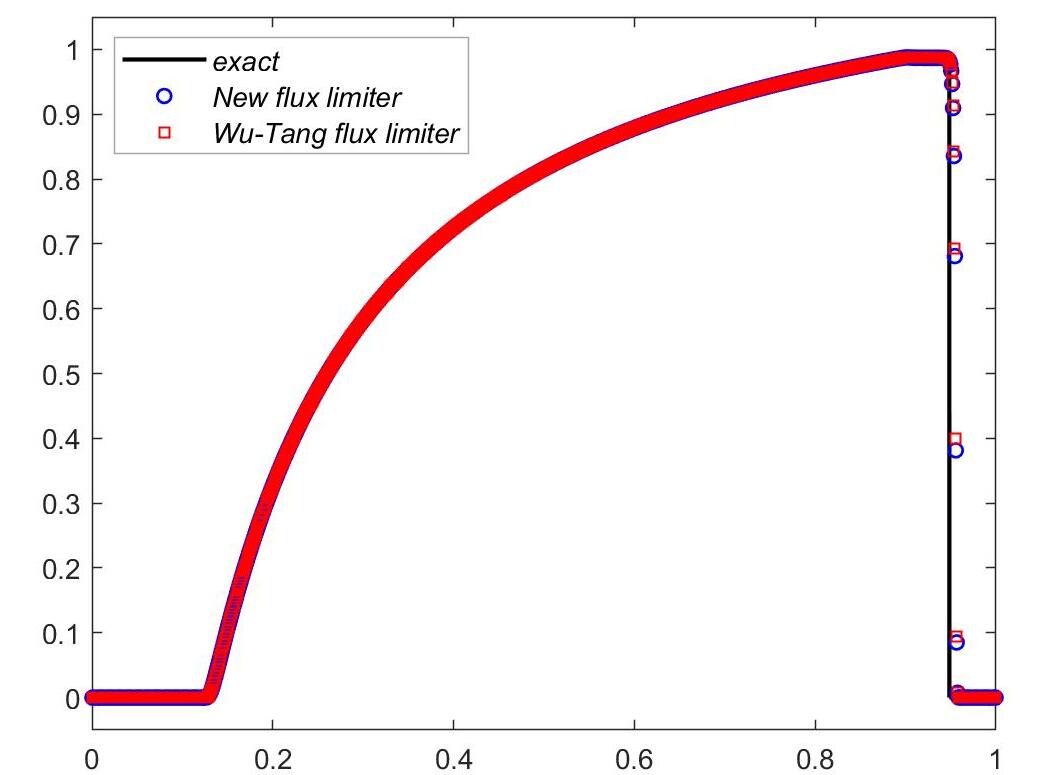}
		\caption{$v_1$}
	\end{subfigure}
    \begin{subfigure}[t]{.48\textwidth}
		\centering
		\includegraphics[width=1\textwidth]{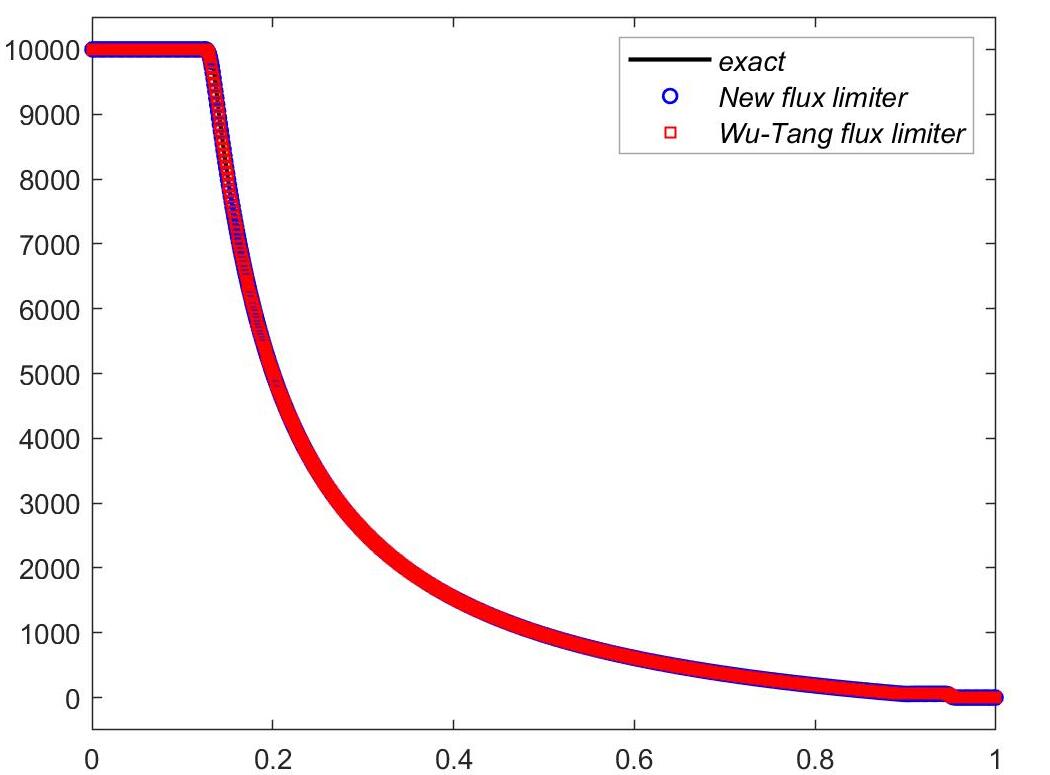}
		\caption{$p$}
	\end{subfigure}
	\caption{Example \ref{Ex:RCS}: Numerical results for $\rho,\ v_1,\ p$ obtained using WENO5 with the relaxed PCP flux limiter at $t = 0.45$. }\label{Fig:ExRCS}
\end{figure}

{
For discontinuous solutions such as Riemann problems, formal convergence order does not apply, and the error is concentrated in a narrow region around shocks and contact discontinuities. The primary goal of numerical methods for such problems is to produce sharp, non-oscillatory shock profiles while preserving physical constraints.
}

\begin{figure}[!htb]
	\centering
	\begin{subfigure}[t]{.48\textwidth}
		\centering
		\includegraphics[width=1\textwidth]{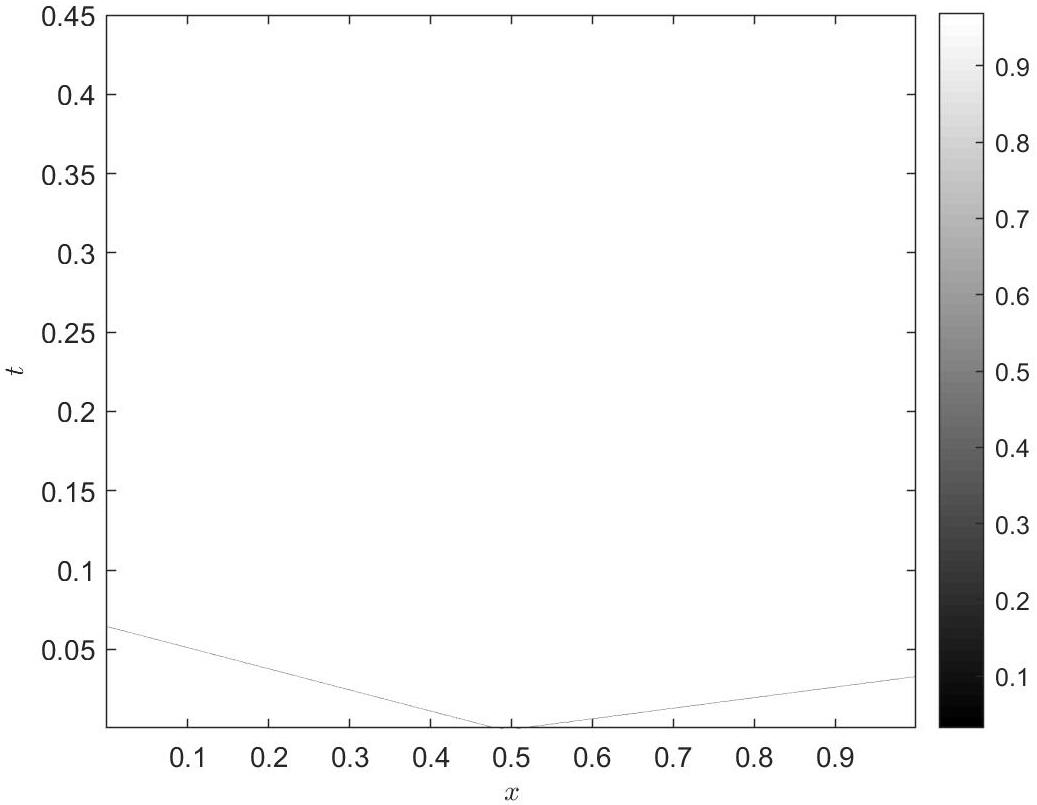}
		\caption{Example \ref{Ex:RCS}} \label{Fig:RCSLimiterMap}
	\end{subfigure}
    \begin{subfigure}[t]{.48\textwidth}
		\centering
		\includegraphics[width=1\textwidth]{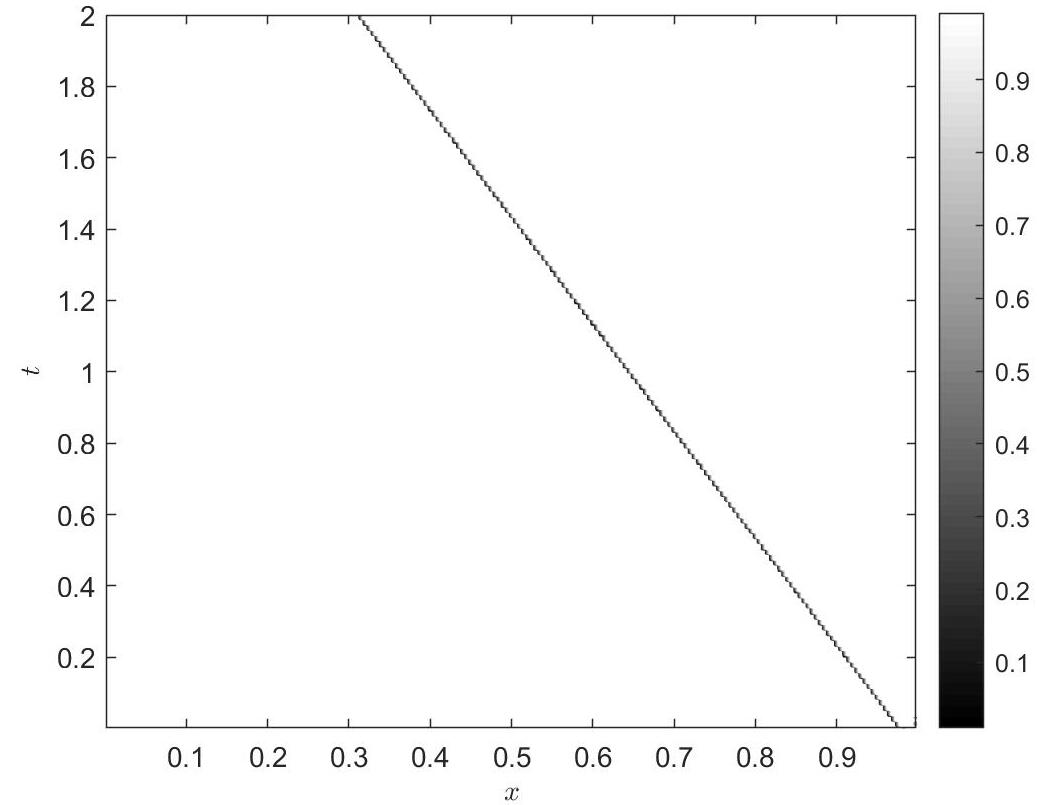}
		\caption{Example \ref{Ex:SH}} \label{Fig:SHLimiterMap}
	\end{subfigure}
	\caption{Spatiotemporal distribution of the proposed flux limiter coefficient $\theta(x,t)$ computed with WENO5. Activation regions align with shock discontinuities and track wave trajectories. The color scale represents the value of $\theta_{i+1/2}$, ranging from 0 to 1. }\label{Fig:1DLimiterActivation}
\end{figure}

\begin{example}{(1D Riemann problem)} \label{Ex:RCS}
We simulate a demanding Riemann problem within the domain $[0,1]$.
The initial conditions are as follows:
\begin{equation*}
    \textbf{V}(x,0)=
    \begin{cases}
    (1,0,10^{4})^\top,&  x<0.5,\\
    (1,0,10^{-8})^\top,&  x>0.5.
    \end{cases}
\end{equation*}
The outflow boundary conditions are applied.
The solution at final time $t=0.45$ develops a strong leftward rarefaction, a rightward contact discontinuity, and a rightward shock. The latter two waves propagate at velocities exceedingly close to the speed of light (approximately 0.986956 and 0.9963757), rendering the simulation highly challenging. A key difficulty lies in resolving the extremely narrow region between the contact discontinuity and the shock front, which has a theoretical width of about $4.2\times10^{-3}$ at $t = 0.45$.
Fig. \ref{Fig:ExRCS} compares the numerical solutions obtained using our proposed GQL-based flux limiter against the iterative limiter of Wu and Tang \cite{WuTang2015}. While both methods preserve positivity, the proposed scheme (circles) exhibits visibly sharper resolution of the contact discontinuity and the shock front compared to the {\tt Wu-Tang} limiter (squares). This improvement is attributed to the exact determination of the locally optimal limiting parameter, avoiding the over-dissipation.
{
To further quantify the performance of the proposed flux limiter and analyze its activation behavior, Fig. \ref{Fig:RCSLimiterMap} shows the spatiotemporal distribution of the flux limiter coefficient $\theta(x,t)$, which is consistent with the wave structure.
}
\end{example}

\begin{figure}[!htb]
	\centering
	\begin{subfigure}[t]{.48\textwidth}
		\centering
		\includegraphics[width=1\textwidth]{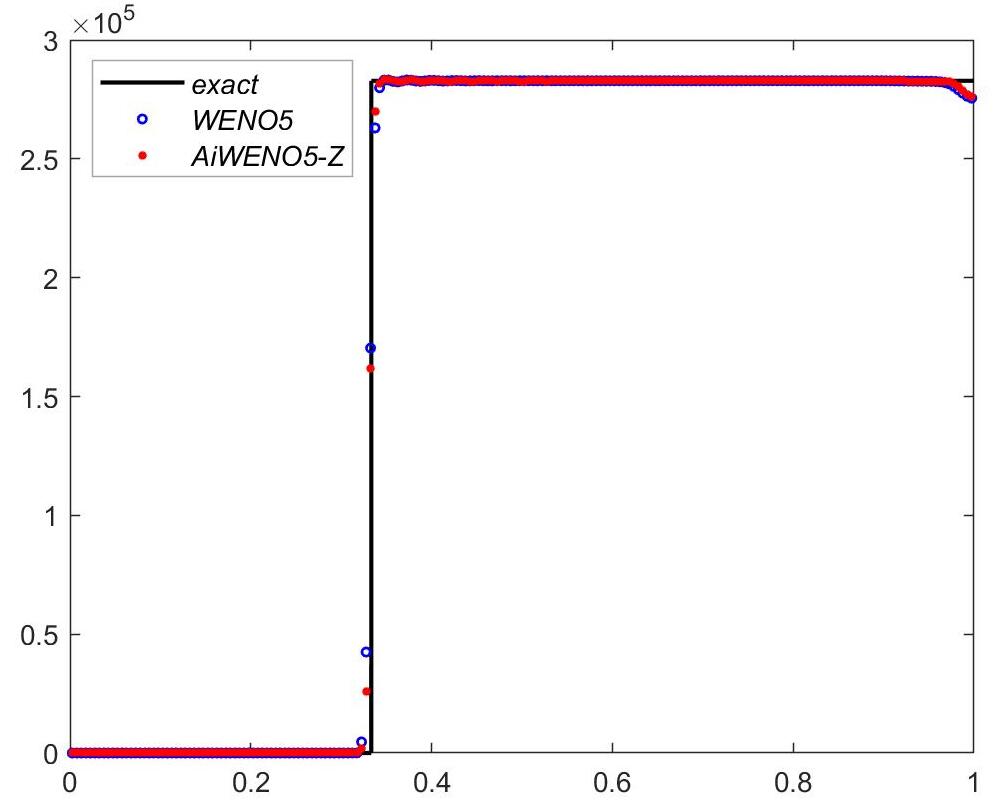}
		\caption{$\rho$}
	\end{subfigure}
    \begin{subfigure}[t]{.48\textwidth}
		\centering
		\includegraphics[width=1\textwidth]{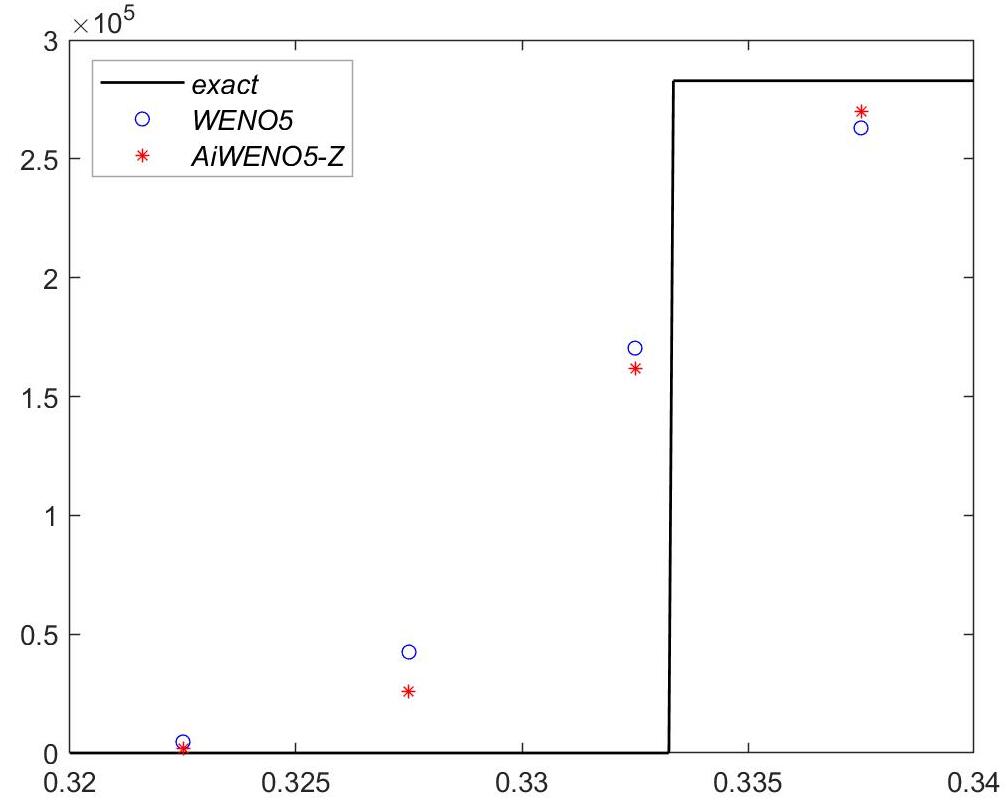}
		\caption{Close-up of $\rho$}
	\end{subfigure}
    \begin{subfigure}[t]{.48\textwidth}
		\centering
		\includegraphics[width=1\textwidth]{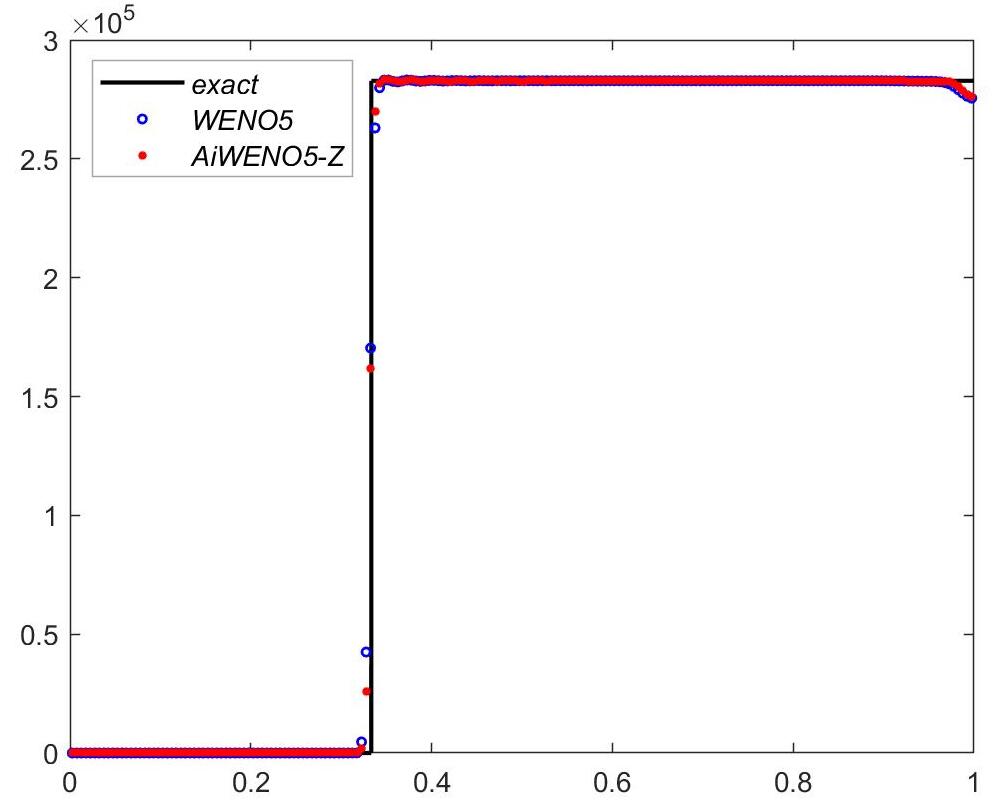}
		\caption{$v_1$}
	\end{subfigure}
    \begin{subfigure}[t]{.48\textwidth}
		\centering
		\includegraphics[width=1\textwidth]{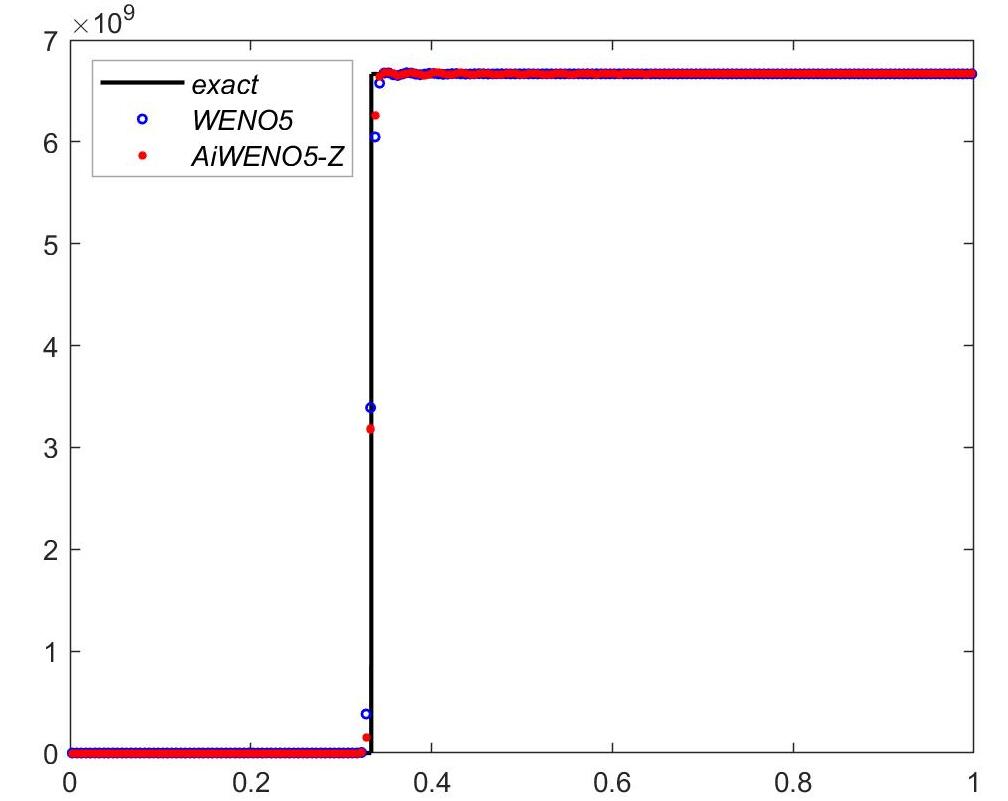}
		\caption{$p$}
	\end{subfigure}
	\caption{Example \ref{Ex:SH}: Numerical results for $\rho,\ v_1,\ p$ obtained using WENO5 or AiWENO5-Z with the proposed PCP flux limiter at $t = 2$. }\label{Fig:ExSH}
\end{figure}

\begin{example}{(Shock heating problem)}\label{Ex:SH}
We consider a shock heating test in this example. The initial data are prescribed as
\begin{equation*}
    \textbf{V}(x,0)=
    (1,1-10^{-10},{10^{-4}}/{3})^\top,\quad  0<x<1.
\end{equation*}
At $x = 1$, a reflective boundary condition is imposed, while the left boundary $x=0$ allows free inflow.
Physically, this setup describes a cold, ultra-relativistic gas stream moving to the right toward a fixed wall. Upon impact, the kinetic energy is converted into thermal energy, resulting in strong compression and the formation of a left-propagating shock. The shock speed can be derived analytically as $v_s=\frac{(\Gamma-1)W_0|v_0|}{W_0+1}$, where $\Gamma=4/3$, $v_0=1-10^{-10}$, and $W_0=(1-v_0^2)^{-1/2}\approx70710.675$. Behind the shock, the gas is at rest and attains a specific internal energy of $W_0-1$.

{To demonstrate the generality of the proposed limiter, we also incorporate our GQL-based flux limiter into the fifth-order Ai-WENO-Z scheme (AiWENO5-Z) \cite{wang2022affine}.} Fig. \ref{Fig:ExSH} presents the profiles of rest-mass density $\rho$, velocity $v_1$, and pressure $p$ at $t=2$ computed by {either WENO5 or AiWENO5-Z} with our flux limiter on a mesh of 200 uniform cells. The exact solution is provided for reference. It is noteworthy that without the flux limiter, {both the finite difference WENO5 and AiWENO5-Z scheme would produce non-physical solutions and break down during the simulation. The AiWENO5-Z solution exhibits higher resolution near the shock as expected. We also provide pointwise error plots in Fig. \ref{Fig:ExSH_error}, which implies that the error is concentrated in a narrow region around the shock, consistent with WENO's accuracy behavior.} {The tiny oscillations are visible immediately behind the reflected shock, which are also observed in \cite{WuTang2015}. Critically, this behavior is not unique to our flux limiter, but rather a general property of high-order shock-capturing methods.} Although a slight wall-heating phenomenon persists near the reflective boundary $x=1$, the overall solution exhibits excellent stability and high resolution.
{
Fig. \ref{Fig:SHLimiterMap} presents the spatiotemporal distribution of $\theta(x,t)$ for the shock heating problem. {In this problem, the flux limiter $\theta$ takes values of 0 or 1 in nearly the entire domain, and intermediate values only exist in an extremely narrow band right at the shock front.} The activation region forms a straight line that exactly coincides with the trajectory of the left-propagating reflected shock wave, demonstrating the limiter's precise shock-tracking capability.
}
\end{example}

\begin{figure}[!htb]
	\centering
	\begin{subfigure}[t]{.48\textwidth}
		\centering
		\includegraphics[width=1\textwidth]{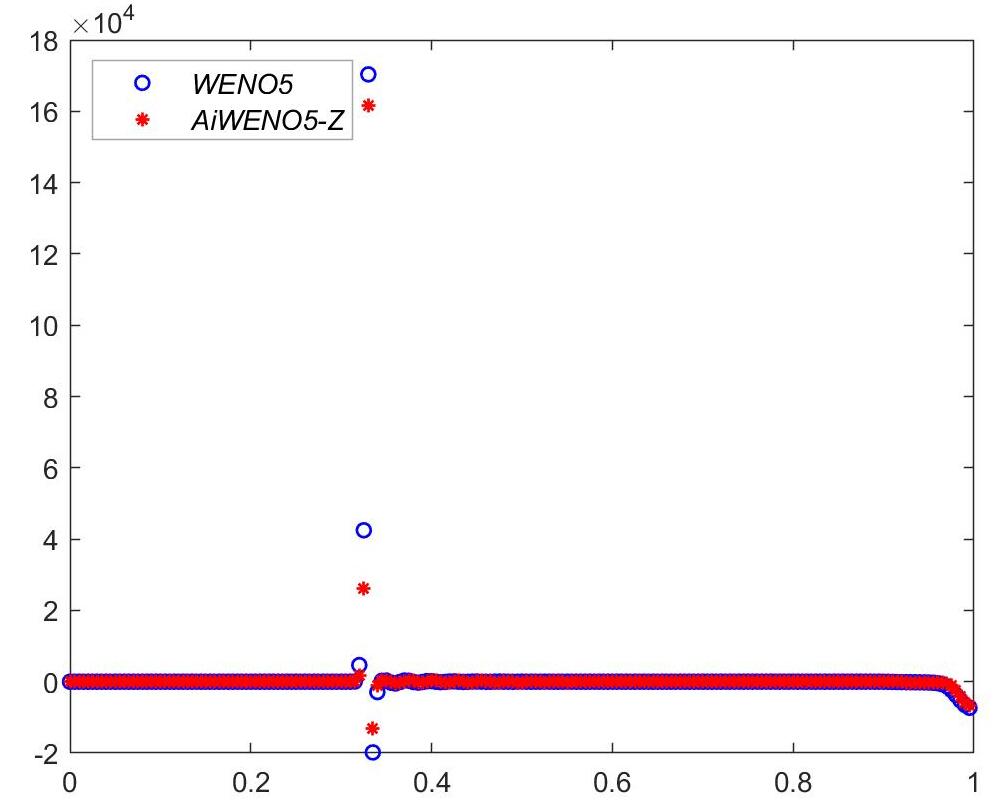}
		\caption{Error plot for $\rho$}
	\end{subfigure}
    \begin{subfigure}[t]{.48\textwidth}
		\centering
		\includegraphics[width=1\textwidth]{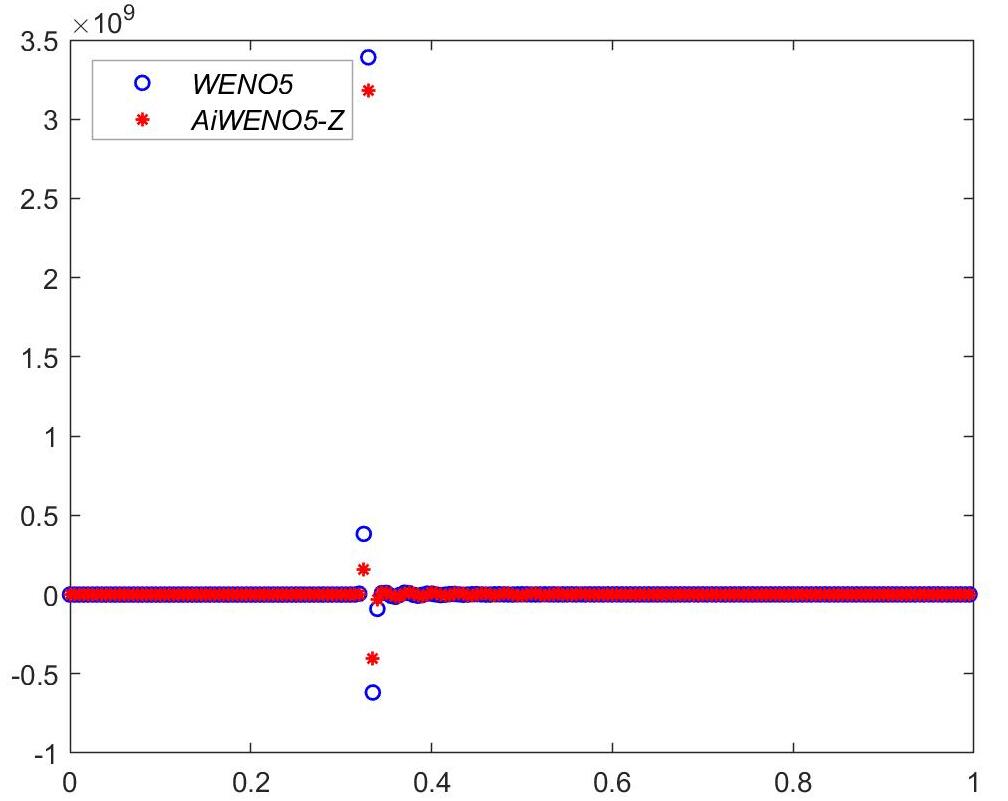}
		\caption{Error plot for $p$}
	\end{subfigure}
	\caption{Example \ref{Ex:SH}: Plots of error between numerical and exact solution $u-u_{\tt ex}$ at $t = 2$ obtained by WENO5 or AiWENO5-Z with the proposed PCP flux limiter. }\label{Fig:ExSH_error}
\end{figure}

\begin{example}{(2D Riemann problems).}\label{Ex5.2.5}
In this example, we simulate two challenging 2D Riemann problems to verify the robustness and bound-preserving property of our flux limiter. The computational domain is set to $[0,1]^2$ with outflow boundary conditions imposed on all boundaries. The maximum fluid velocity in both problems is close to the speed of light, presenting significant challenges for numerical simulation. We employ a uniform mesh with $400\times400$ cells to assess the performance of WENO5, using the PCP limiter with the estimator \eqref{16mat2D} and its relaxed variant \eqref{relax2D}.  

The initial conditions for the two 2D Riemann problems are given by 
\begin{equation*}
    \textbf{V}(x,y,0) = 
    \begin{cases}
        (0.1, 0, 0, 20)^\top, & x>0.5,\ y>0.5,\\
        (0.00414329639576, 0.9946418833556542, 0, 0.05)^\top, & x<0.5,\ y>0.5,\\
        (0.01, 0, 0, 0.05)^\top, & x<0.5,\ y<0.5,\\
        (0.00414329639576, 0, 0.9946418833556542, 0.05)^\top, & \text{otherwise},
    \end{cases}
\end{equation*} 
and
\begin{equation*}
  \textbf{V}(x,y,0) = 
    \begin{cases}
    (0.1, 0, 0, 0.01)^\top & x>0.5,\ y>0.5,\\
    (0.1, 0.99, 0, 1)^\top & x<0.5,\ y>0.5,\\
    (0.5, 0, 0, 1)^\top & x<0.5,\ y<0.5,\\
    (0.1, 0, 0.99, 1)^\top &\text{otherwise}.
    \end{cases}
\end{equation*}
Fig. \ref{Fig:2D_RP4} and Fig. \ref{Fig:2D_RP5} display the logarithmic rest-mass density $\ln\rho$ at $t = 0.4$ for the first and second Riemann problems, respectively. 
{The proposed scheme} captures the intricate structures, including the sharp jet-like spike, the curved shocks, and the finely resolved mushroom cloud with high resolution. In both simulations, the solution remains strictly within the physical bounds, demonstrating that the present method maintains high resolution while preserving stability and physical constraints. To see the advantages in the resolution of complex wave structures, we further show the results of the rest-mass density, obtained by WENO5 with both proposed PCP flux limiters and the {\tt Wu-Tang} one, over several 1D lines (see Fig. \ref{Fig:2D_RP4_1Dline}), or plot the global minimum of $\theta_{i,j}$ in each step over time (see Fig. \ref{Fig:2D_RP_FLalpha}). These plots validate the high resolution of our proposed PCP flux limiter. {The mild oscillations visible near the shock in Fig. \ref{Fig:2D_RP4_1Dline_a} are inherent to the WENO5 reconstruction on this coarse grid and remain bounded; crucially, they do not violate any physical admissibility constraint.}

{
To quantify the difference between the exact and relaxed implementations of the proposed flux limiter, Fig.~\ref{Fig:2D_RP45_cmp} shows the natural logarithm of the absolute pointwise density difference, i.e., the logarithm  $\log(|\rho_{\text{\tt ex}}-\rho_{\text{\tt re}}|)$, where $\rho_{\text{\tt ex}}$ and $\rho_{\text{\tt re}}$ denote the results obtained with the exact limiter \eqref{16mat2D} and the relaxed limiter \eqref{relax2D}, respectively. The comparison demonstrates that the differences are extremely small:}
{
\begin{itemize}
    \item For the first 2D Riemann problem, the maximum pointwise difference is approximately $3.3290 \times 10^{-5}$, and the average difference is approximately $2.5839 \times 10^{-8}$.
    \item For the second 2D Riemann problem, the maximum pointwise difference is approximately $4.4900 \times 10^{-5}$, and the average difference is approximately $3.0354 \times 10^{-8}$.
\end{itemize}
These differences are orders of magnitude smaller than the solution amplitude, confirming that the relaxed flux limiter does not introduce excessive or unnecessary numerical dissipation, while providing dramatic computational savings.
}
\end{example}

\begin{figure}[!htb]
	\centering
	\begin{subfigure}[t]{.32\linewidth}
		\centering
		\includegraphics[width=0.95\textwidth]{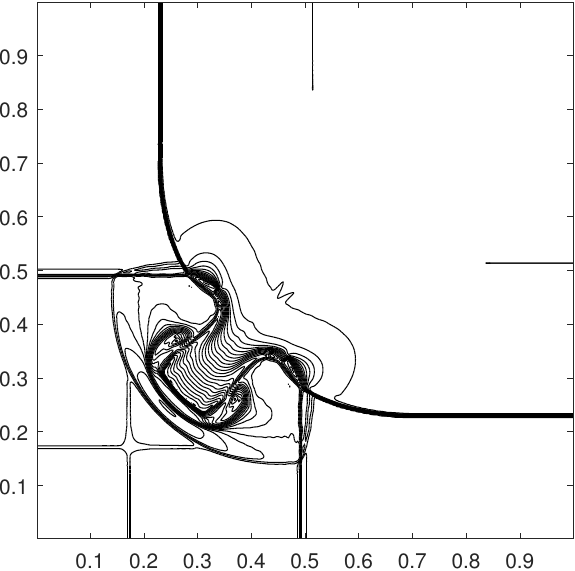}
		\caption{$\log \rho$, flux limiter \eqref{16mat2D}.}
	\end{subfigure}
	\begin{subfigure}[t]{.32\linewidth}
		\centering
		\includegraphics[width=0.95\textwidth]{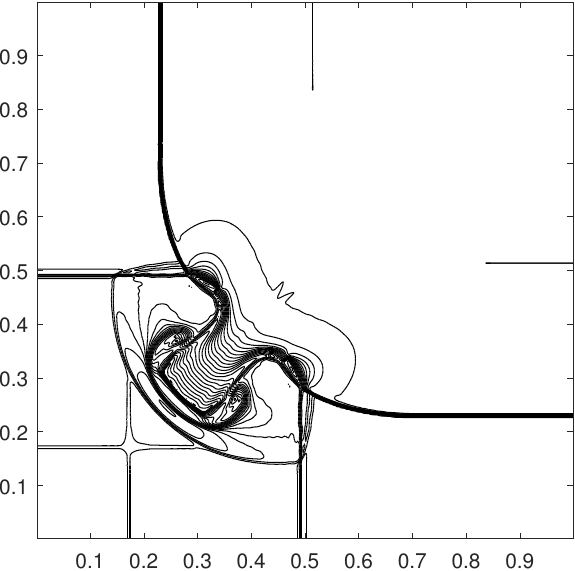}
		\caption{$\log \rho$, flux limiter \eqref{relax2D}.}
	\end{subfigure}
    \begin{subfigure}[t]{.32\linewidth}
		\centering
		\includegraphics[width=0.95\textwidth]{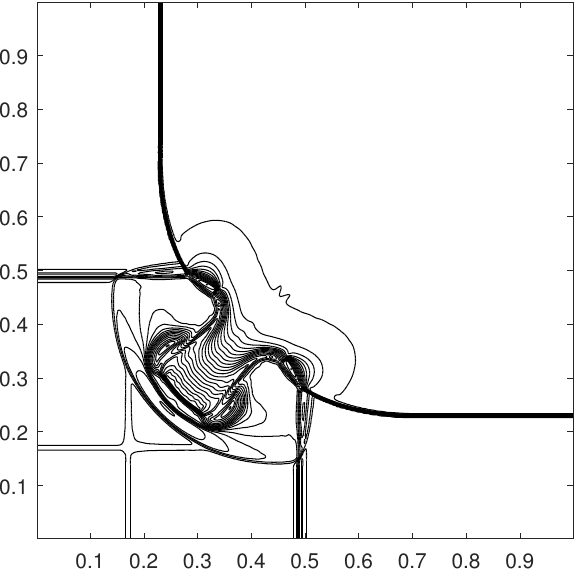}
		\caption{$\log \rho$, {\tt Wu-Tang} flux limiter.}
	\end{subfigure}
	\caption{First 2D Riemann problem in Example \ref{Ex5.2.5}: The contours of $\log\rho$ at $t = 0.4$ obtained by WENO5 {with different flux limiters.} 25 equally spaced contour lines from -8 to -2.3 are displayed.}\label{Fig:2D_RP4}
\end{figure}

\begin{figure}[!htb]
	\centering
	\begin{subfigure}[t]{.48\linewidth}
		\centering
		\includegraphics[width=0.95\textwidth]{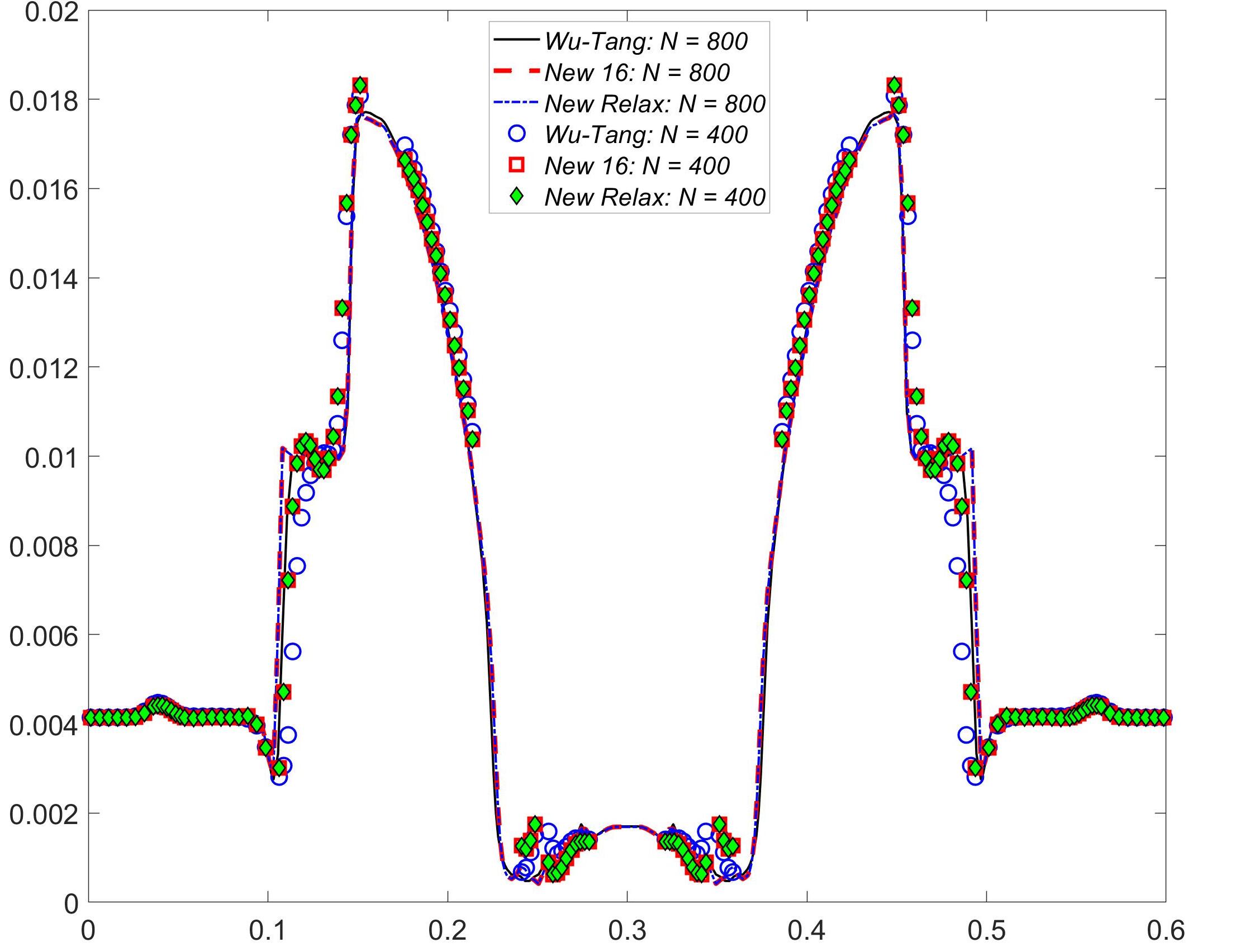}
		\caption{$\rho$ along $x+y=0.6$.}\label{Fig:2D_RP4_1Dline_a}
	\end{subfigure}
	\begin{subfigure}[t]{.48\linewidth}
		\centering
		\includegraphics[width=0.95\textwidth]{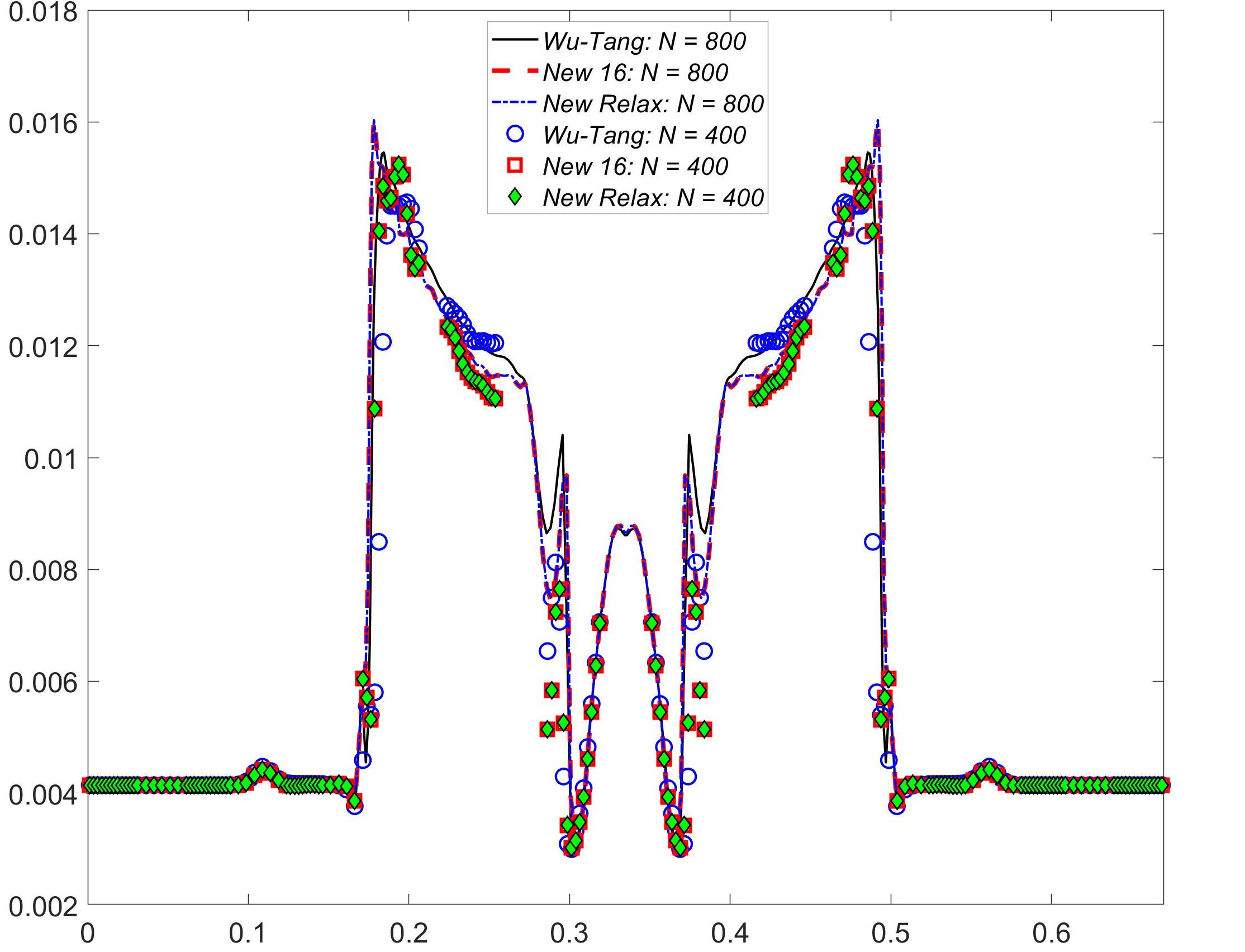}
		\caption{$\rho$ along $x+y=0.67$.}
	\end{subfigure}
    \begin{subfigure}[t]{.48\linewidth}
		\centering
		\includegraphics[width=0.95\textwidth]{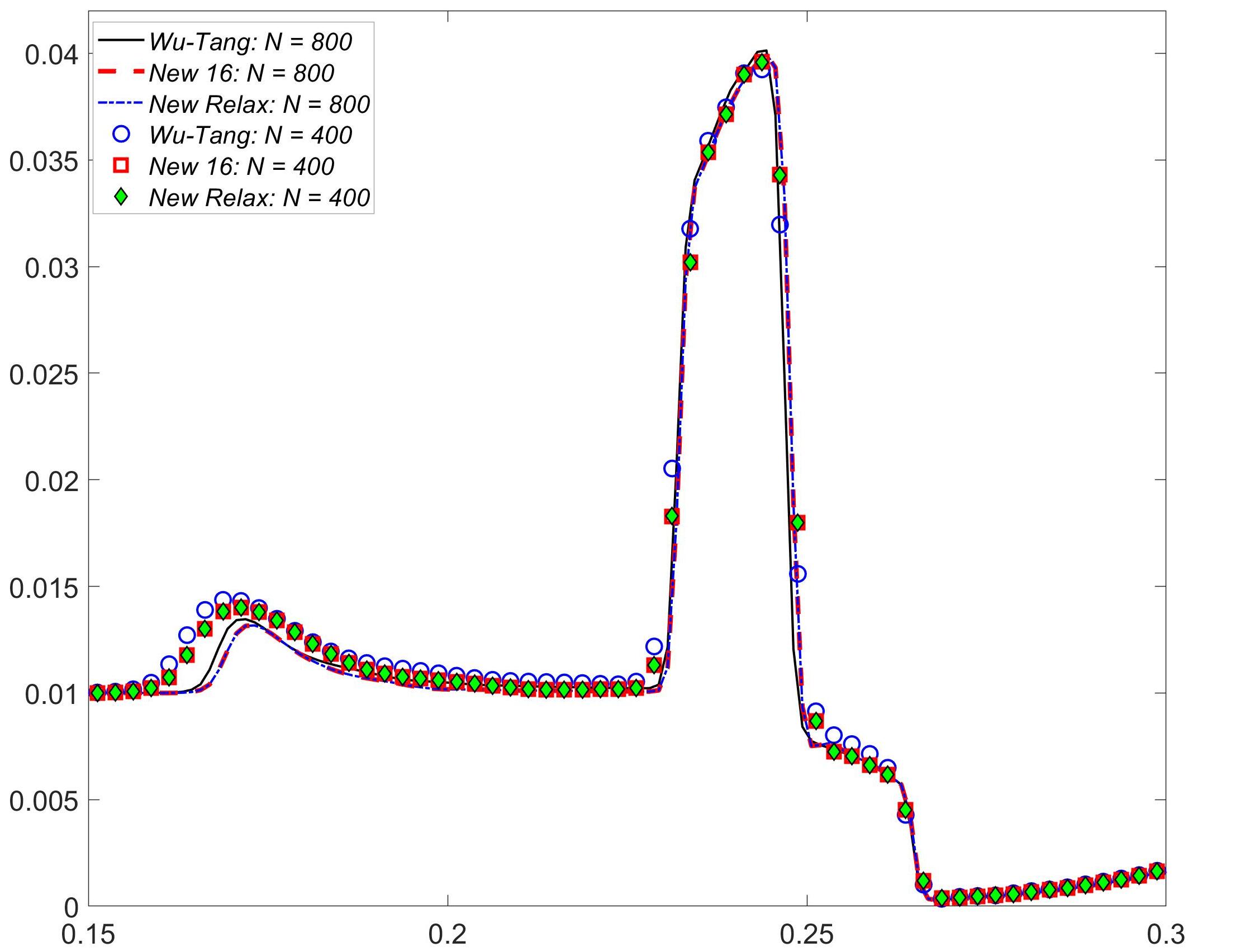}
		\caption{$\rho$ along $x=y$.}
	\end{subfigure}
    \begin{subfigure}[t]{.48\linewidth}
		\centering
		\includegraphics[width=0.95\textwidth]{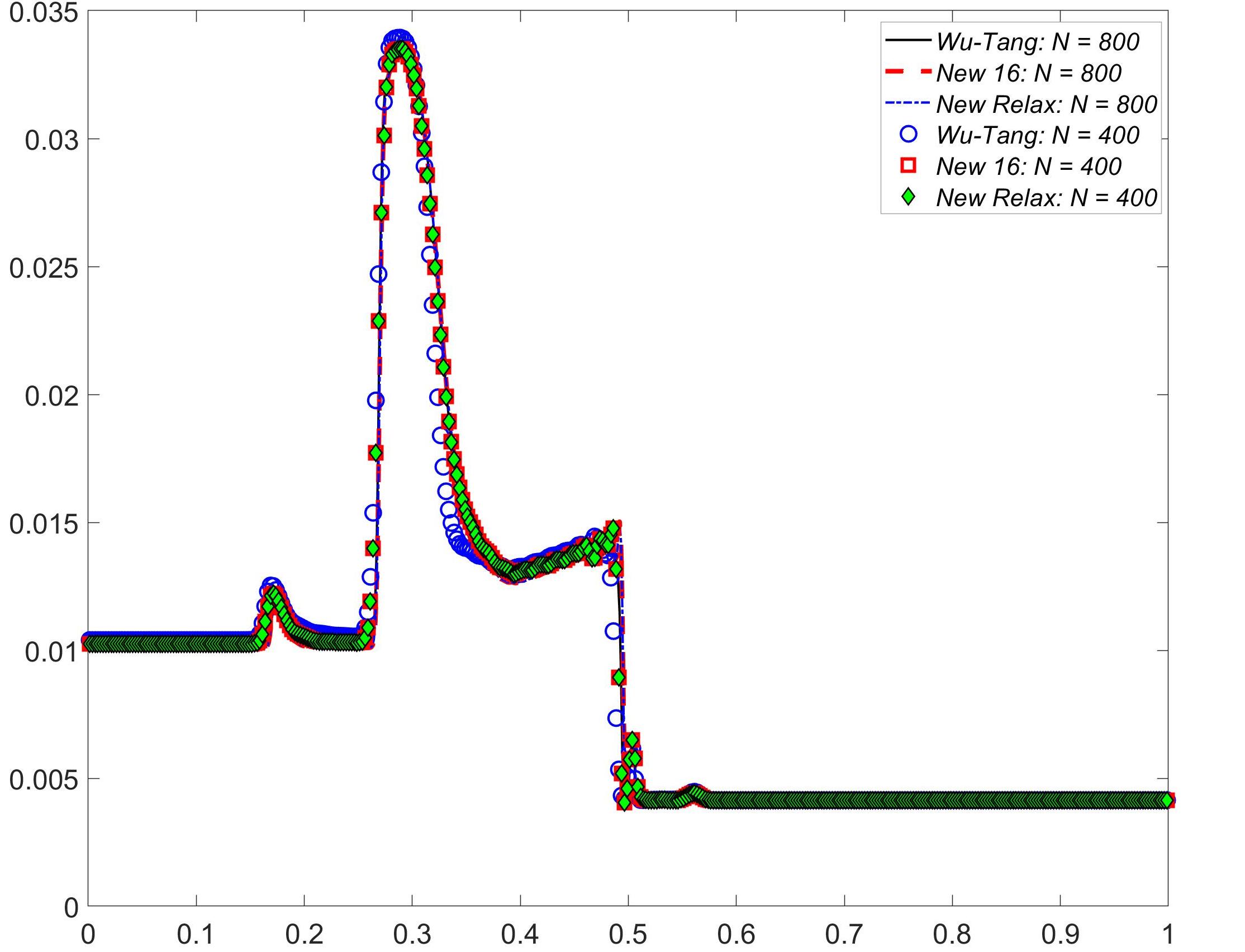}
		\caption{$\rho$ along $y=0.2$.}
	\end{subfigure}
	\caption{Same as Fig. \ref{Fig:2D_RP4} except for $\rho$ along different lines.}\label{Fig:2D_RP4_1Dline}
\end{figure}

\begin{figure}[!htb]
	\centering
	\begin{subfigure}[t]{.32\linewidth}
		\centering
		\includegraphics[width=0.95\textwidth]{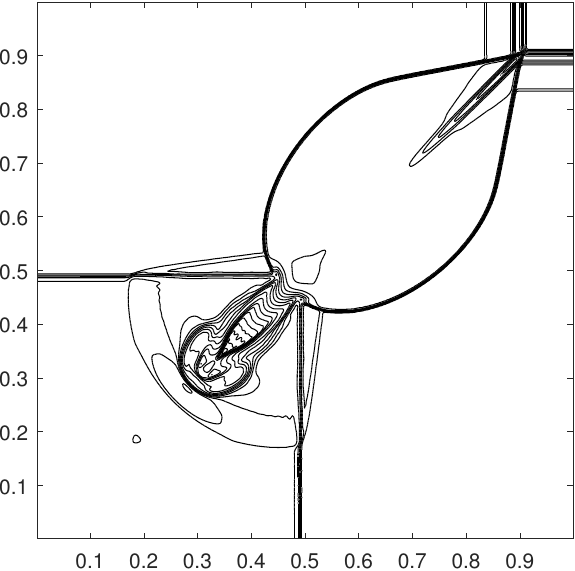}
		\caption{$\log \rho$, flux limiter \eqref{16mat2D}.}
	\end{subfigure}
	\begin{subfigure}[t]{.32\linewidth}
		\centering
		\includegraphics[width=0.95\textwidth]{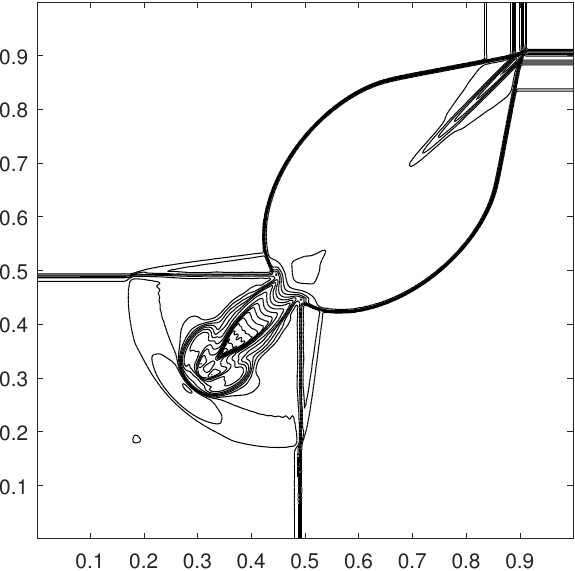}
		\caption{$\log \rho$, flux limiter \eqref{relax2D}.}
	\end{subfigure}
    \begin{subfigure}[t]{.32\linewidth}
		\centering
		\includegraphics[width=0.95\textwidth]{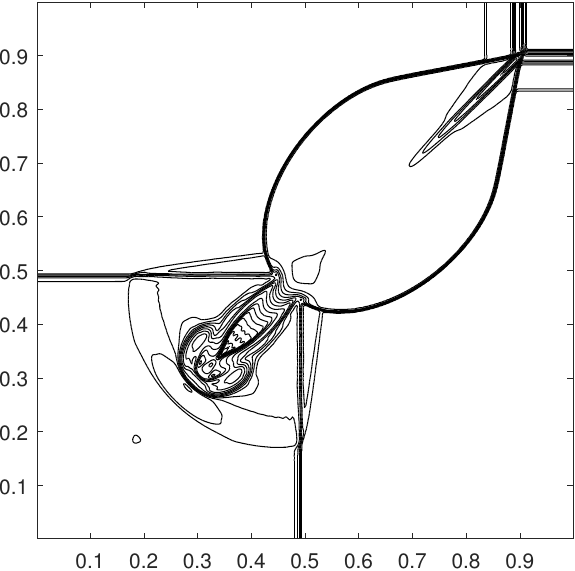}
		\caption{$\log \rho$, {\tt Wu-Tang} flux limiter.}
	\end{subfigure}
	\caption{Second 2D Riemann problem in Example \ref{Ex5.2.5}: The contours of $\log\rho$ at $t = 0.4$ obtained by WENO5 {with different flux limiters.} 25 equally spaced contour lines from -6 to 1.9 are displayed.}\label{Fig:2D_RP5}
\end{figure}

\begin{figure}[!htb]
	\centering
	\begin{subfigure}[t]{.48\linewidth}
		\centering
		\includegraphics[width=1\textwidth]{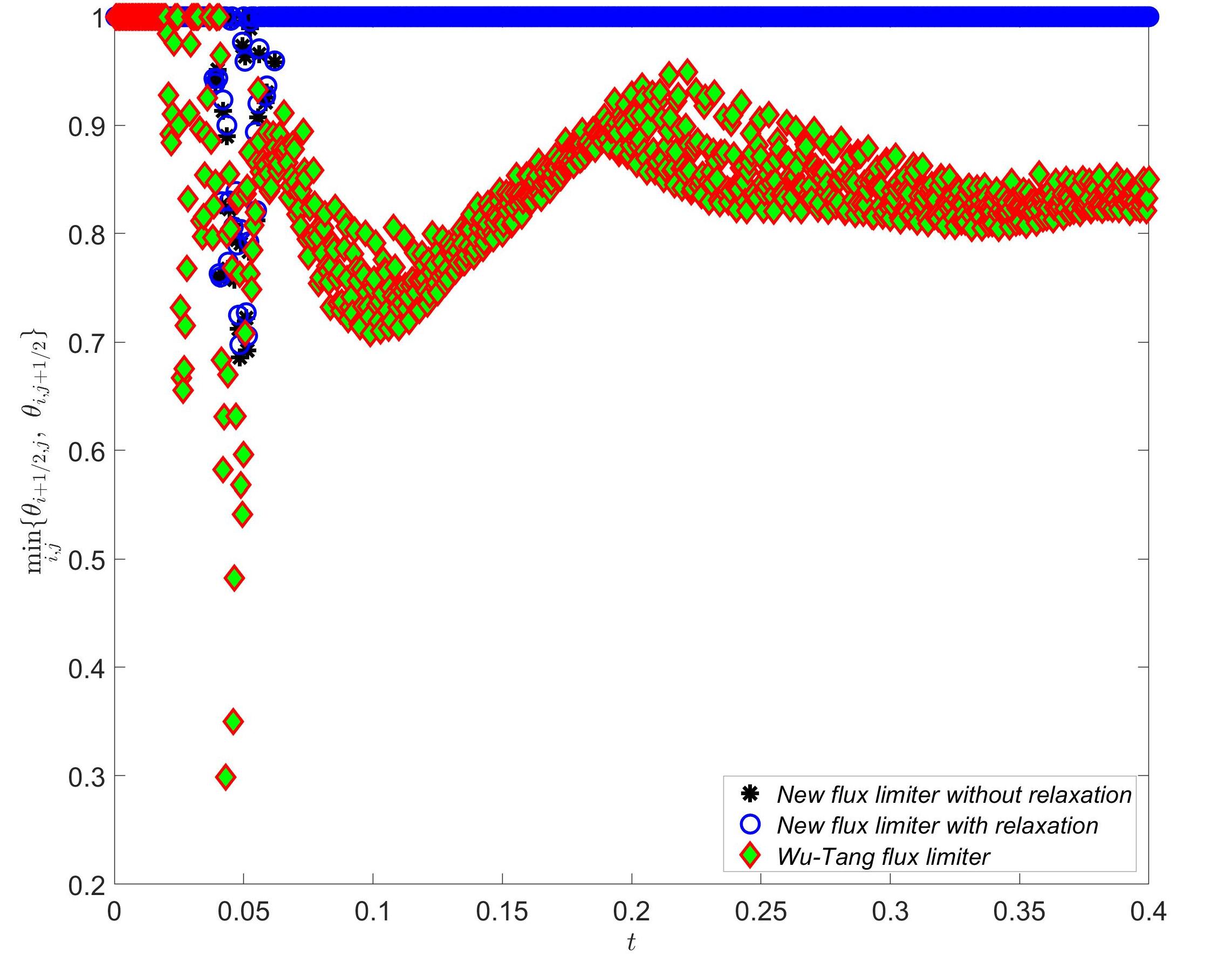}
		\caption{First 2D Riemann problem.}
	\end{subfigure}
	\begin{subfigure}[t]{.48\linewidth}
		\centering
		\includegraphics[width=1\textwidth]{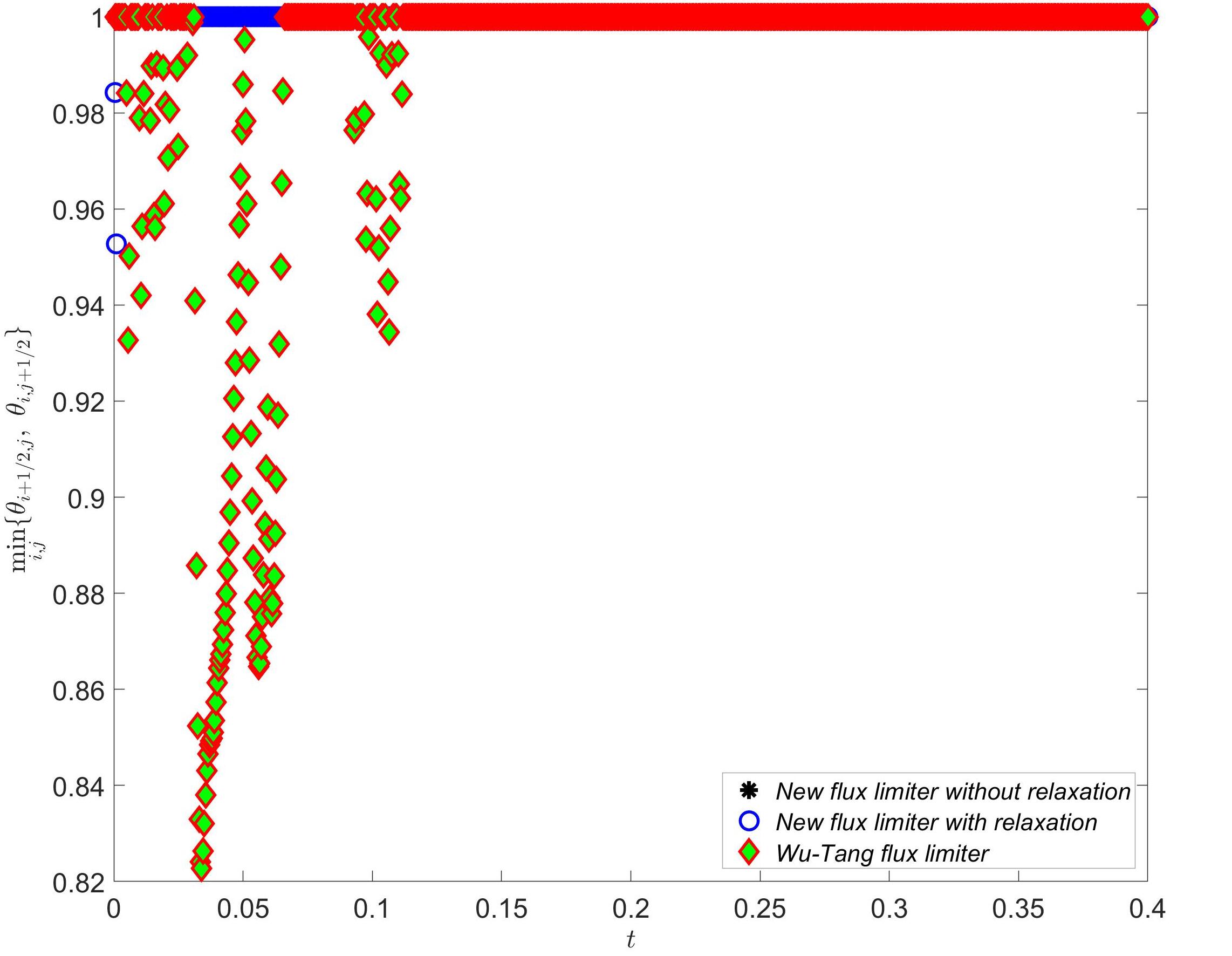}
		\caption{Second 2D Riemann problem.}
	\end{subfigure}
	\caption{Example \ref{Ex5.2.5}: Comparison of $\min\limits_{i,j}\left\{\theta_{i+1/2,j},\ \theta_{i,j+1/2}\right\}$ between different flux limiters over time. }\label{Fig:2D_RP_FLalpha}
\end{figure}

\begin{figure}[!htb]
	\centering
	\begin{subfigure}[t]{.48\linewidth}
		\centering
		\includegraphics[width=0.95\textwidth]{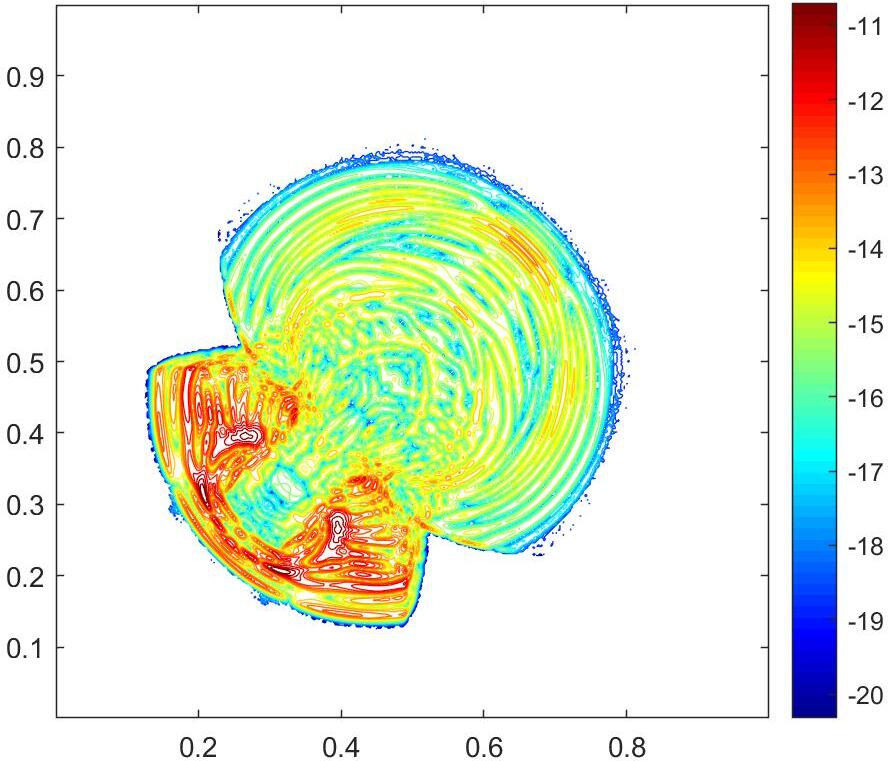}
		\caption{{Error logarithm  for first 2D Riemann problem.}}
	\end{subfigure}
	\begin{subfigure}[t]{.48\linewidth}
		\centering
		\includegraphics[width=0.95\textwidth]{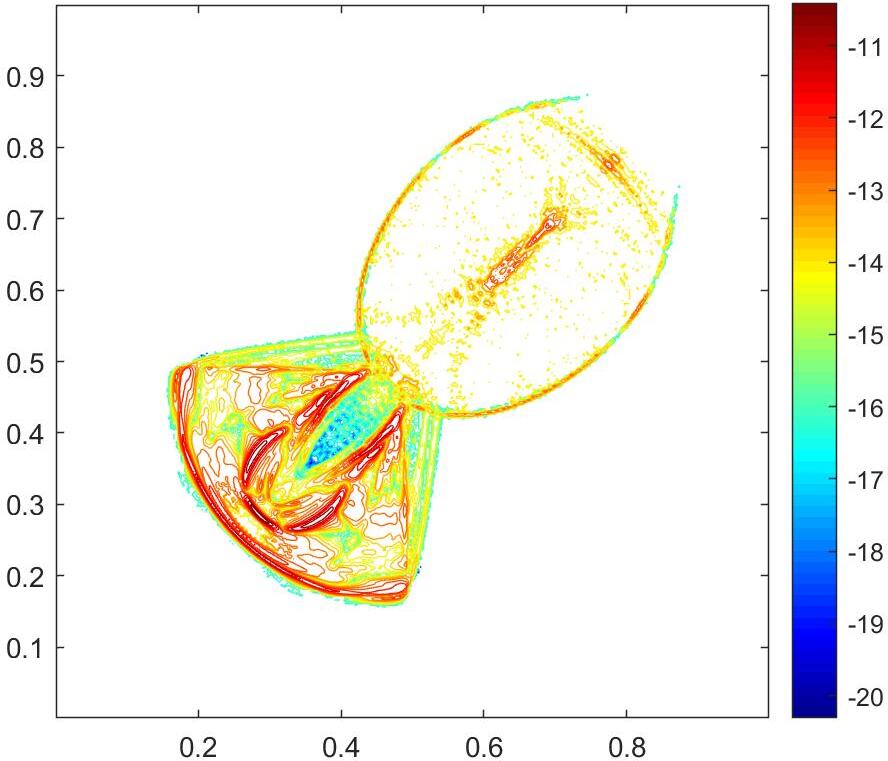}
		\caption{{Error logarithm  for second 2D Riemann problem.}}
	\end{subfigure}
	\caption{{Example \ref{Ex5.2.5}: Contours of the logarithm $\log(|\rho_{\text{\tt ex}}-\rho_{\text{\tt re}}|)$ at $t=0.4$ obtained by WENO5, where $\rho_{\text{\tt ex}}$ and $\rho_{\text{\tt re}}$ denote the density computed with the exact and relaxed flux limiters, respectively. 25 equally spaced contour levels are displayed.}}\label{Fig:2D_RP45_cmp}
\end{figure}

\begin{example}{(Shock-Vortex interaction problem)} \label{EX:SV}
This example examines the interaction between a shock wave and a vortex, a problem previously studied in \cite{duan2019high,chen2022physical}. 
The initial condition is given by
\begin{gather*}
    \mathbf{V}(x,y,0) = 
    \begin{cases}
        \mathbf{V}_{\rm L}, & x < -6, \\[5pt]
        \mathbf{V}_{\rm vortex}, & x \ge -6,
    \end{cases}
    \qquad{\rm with}\\
    \mathbf{V}_{\rm L}=\Big(4.891497310766981,-0.388882958251919,0,11.894863258311670\Big)^\top,\\
    \mathbf{V}_{\rm vortex}=\left(
        \left(\hat{\theta}(x,y)\right)^\frac{1}{\Gamma-1},
        \frac{w_1(x,y)-v_c}{1-v_c \, w_1(x,y)},
        \frac{\sqrt{1-v_c^2}\ w_2(x,y)}{1-v_c \, w_1(x,y)},
        \left(\hat{\theta}(x,y)\right)^\frac{\Gamma}{\Gamma-1}
        \right)^\top,
\end{gather*}
where the functions $\hat{\theta}$, $w_1$, and $w_2$ are given by
\begin{equation*}
    \hat{\theta}(x,y) = 1-\frac{{\varepsilon_{\rm vtx}^2}}{28\pi^2} \, \mathrm{e}^{1-\frac{x^2}{1-v_c^2}-y^2}, ~ 
    w_1(x,y) = -y \hat{f}(x,y), 
    ~ 
    w_2(x,y) = \frac{x}{\sqrt{1-v_c^2}}\hat{f}(x,y),
\end{equation*}
with
\begin{equation*}
    \hat{f}(x,y) = \sqrt{\frac{\beta}{1+\beta\left(\frac{x^2}{1-v_c^2}+y^2\right)}},\quad
    \beta = \frac{\frac{{\varepsilon_{\rm vtx}^2}}{10\pi^2}\ \mathrm{e}^{1-\frac{x^2}{1-v_c^2}-y^2}}{1.8-\frac{{\varepsilon_{\rm vtx}^2}}{20\pi^2}\ \mathrm{e}^{1-\frac{x^2}{1-v_c^2}-y^2}}.
\end{equation*}
This setup ensures that
\begin{equation*}
    \lim_{x\rightarrow -6^-} \mathbf{V}(x,y,0) = \mathbf{V}_{\rm L}, 
    \quad
    \lim_{x\rightarrow -6^+} \mathbf{V}(x,y,0) 
    \approx 
    \left(1,-0.9, 0, 1\right)^\top,
\end{equation*}
which represents a stationary shock at $x = -6$. On the right side of the shock, an initially centered vortex at $(0,0)$ moves leftward with a speed of $v_c=0.9$. To demonstrate the robustness of our flux limiter, we investigate a challenging case with a vortex strength set to ${\varepsilon_{\rm vtx}=10.0828}$.
The computational domain is $[-17,3] \times [-5,5]$ with reflective boundary conditions at $y = \pm5$, inflow at $x=3$, and outflow at $x=-17$. We solve this problem up to $t = 19$ using WENO5 with both versions of our flux limiter \eqref{16mat2D} and \eqref{relax2D} on a uniform grid with $\Delta x = \Delta y = 1/40$. 
The contour plots of $\log_{10}(1+|\nabla \rho|)$ are presented in Fig. \ref{Fig:2D_SV}. 
As the vortex interacts with the stationary shock wave, intricate wave structures emerge. {The WENO5 scheme, incorporating either our flux limiter \eqref{16mat2D} or \eqref{relax2D}}, effectively captures these wave patterns. 
{
The difference between the two implementations of our flux limiter, approximately $6.77\times10^{-4}$ in $l^{\infty}$ norm, is much smaller than the solution amplitude, demonstrating the accuracy of the relaxed limiter \eqref{relax2D}.
}

\begin{figure}[!htb]
	\centering
	\begin{subfigure}[t]{.48\textwidth}
		\centering
		\includegraphics[width=1\textwidth]{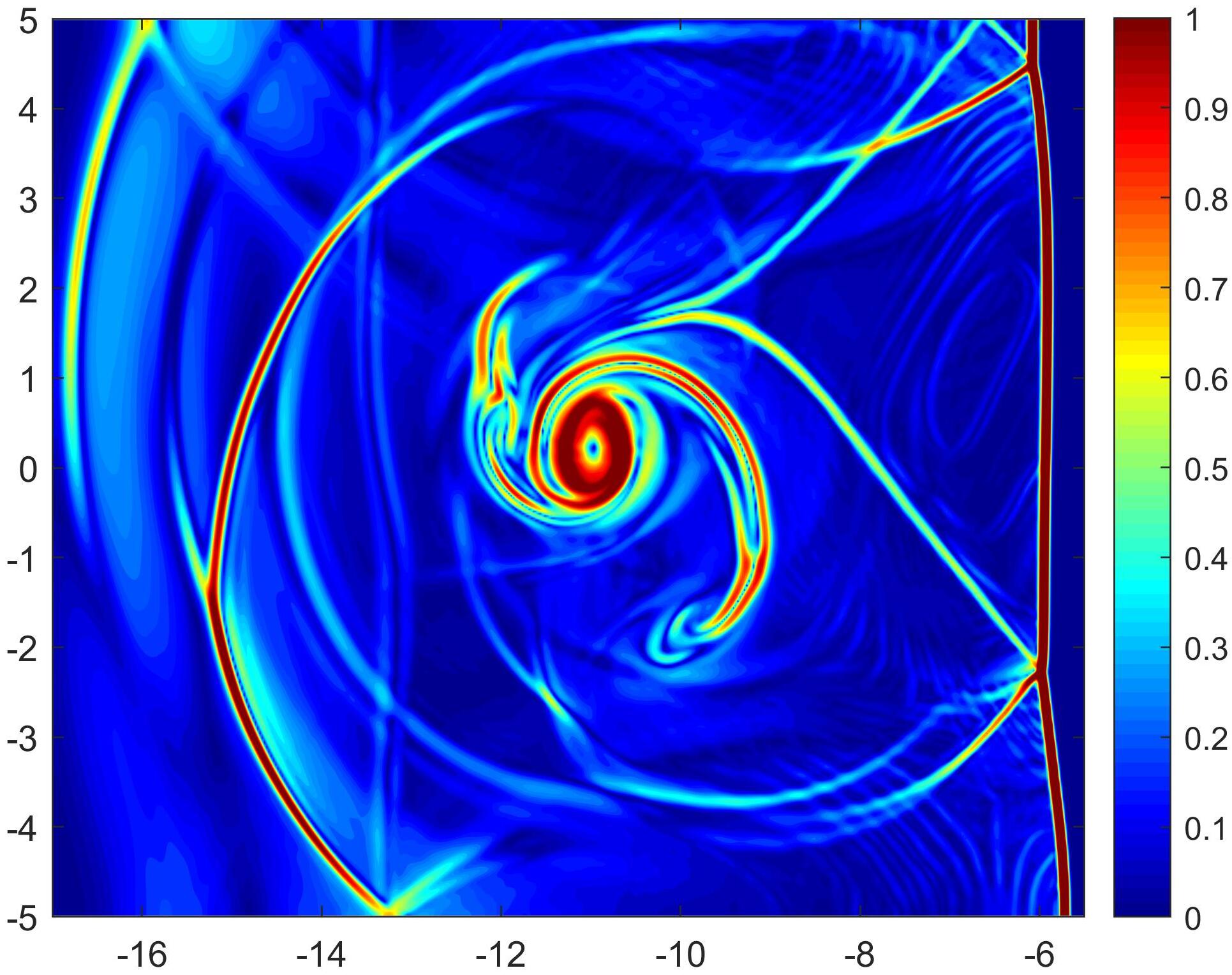}
		\caption{With flux limiter \eqref{16mat2D}.}
	\end{subfigure}
    \begin{subfigure}[t]{.48\textwidth}
		\centering
		\includegraphics[width=1\textwidth]{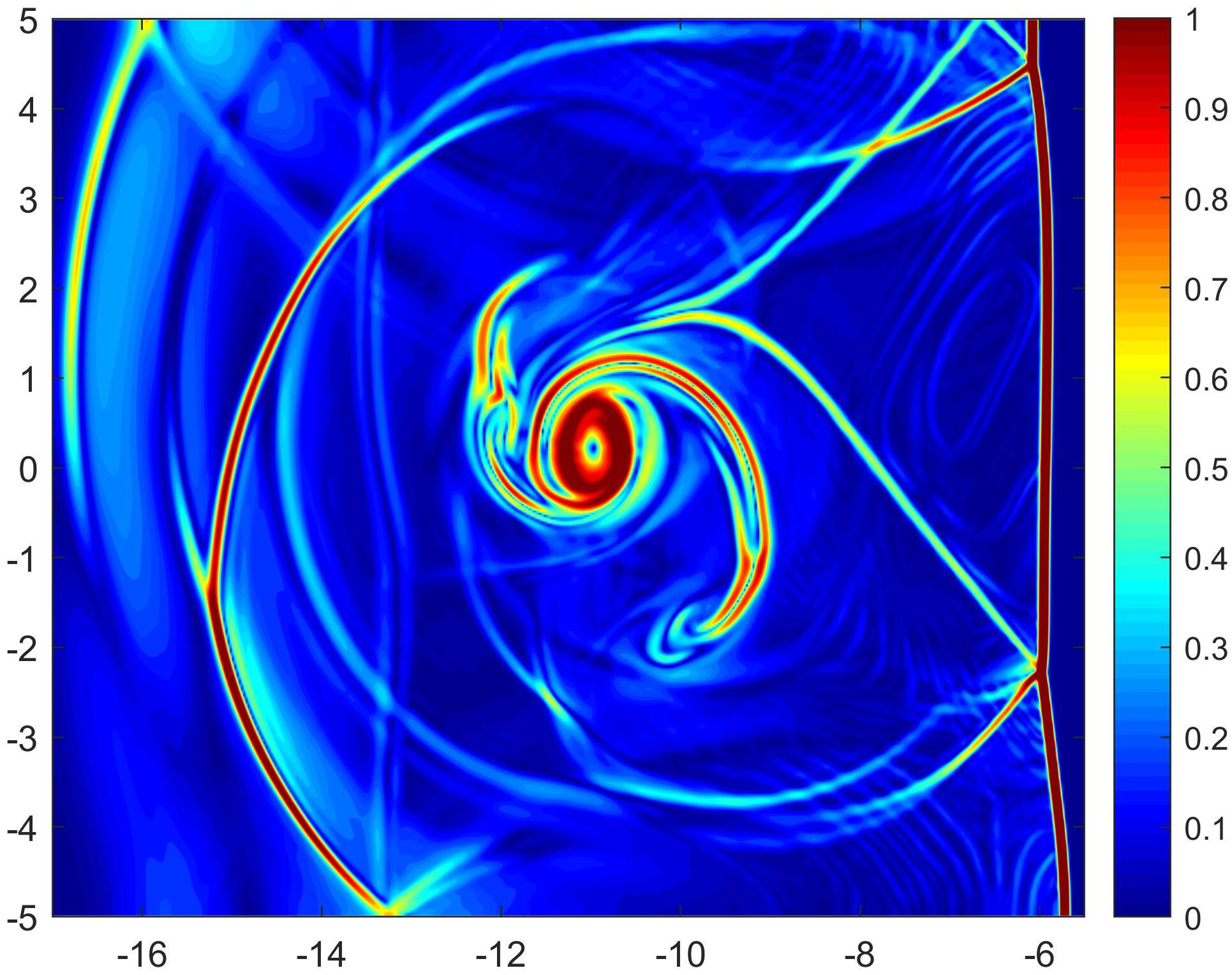}
		\caption{With flux limiter \eqref{relax2D}.}
	\end{subfigure}
	\caption{Example \ref{EX:SV}: Contour plots of ${\rm log}_{10}(1+|\nabla \rho|)$ at $t = 19$, obtained using WENO5.}\label{Fig:2D_SV}
\end{figure}

\end{example}

\begin{example}{(2D Relativistic jets)} \label{EX:jet}
	Relativistic jets are frequently observed in astrophysical phenomena such as active galactic nuclei. Due to the potential development of complex flow features, including strong relativistic shocks, shear layers, and interface instabilities, numerical simulations for relativistic jets are extremely challenging.
	
	We examine three configurations of the pressure-matched, highly supersonic jet model with increasingly extreme relativistic characteristics. The initial states of the ambient flow are defined as:
	\begin{align}
		{\bf V}_1(x,y,0) &= \left(1,0,0,2.3536240721718810\times10^{-5}\right)^\top,\label{j1}\\
		{\bf V}_2(x,y,0) &= \left(1,0,0,2.3966375079777595\times10^{-5}\right)^\top,\label{j2}\\
		{\bf V}_3(x,y,0) &= \left(1,0,0,2.3995344183272123\times10^{-7}\right)^\top.\label{j3}
	\end{align}
	A jet beam with rest-mass density $\rho^b=0.1$ is injected through the bottom boundary inlet $\{y=0,\ |x|\leq0.5\}$. The jet moves in the $y$-direction with velocity $v^b$ while its pressure matches the ambient value. The specific settings for the jet beams corresponding to the ambient flows \eqref{j1}, \eqref{j2}, and \eqref{j3} are:

        \vspace{3mm}
		(i) $v^b = 0.99$, $M_b = 50$ ($\gamma^b\approx7.09$, $M_r\approx354.37$);
  
        \vspace{3mm}
		(ii) $v^b = 0.999$, $M_b = 50$ ($\gamma^b\approx22.37$, $M_r\approx1118.09$);
  
        \vspace{3mm}
		(iii) $v^b = 0.9999$, $M_b = 500$ ($\gamma^b\approx70.71$, $M_r\approx35356.15$).
  
        \vspace{3mm}
        
	Here, the classical beam Mach number is $M_b=v^b/c_s$, and the relativistic Mach number is $M_r=M_b\gamma^b/\gamma^s$, where $\gamma^b$ and $\gamma^s$ are the Lorentz factors for the jet speed and local sound speed, respectively.
	The computational domain $[0,12]\times[0,25]$ is discretized into $240\times500$ uniform cells. Reflective boundary conditions are enforced at $x=0$, a fixed jet beam inflow is imposed on the nozzle $\{y=0,\ 0\leq x\leq0.5\}$, and outflow conditions are applied elsewhere.

    {
	Figs.~\ref{Fig:2D_Jet_lnrho} and~\ref{Fig:2D_Jet_lnpre} present schlieren images of the logarithmic rest-mass density and pressure for the three configurations, computed using both the exact flux limiter \eqref{16mat2D}  (top rows), and the pointwise logarithmic absolute difference between the exact \eqref{16mat2D} and relaxed \eqref{relax2D} limiters (bottom rows). The solutions obtained with the two limiters are visually indistinguishable, with the average pointwise density and pressure differences both below $10^{-5}$
    across all three configurations.} These regimes, particularly the third case with $\gamma^b \approx 70.71$, are notoriously prone to numerical breakdown due to the rapid generation of negative pressure or density during transient evolution. We observe that the standard finite difference WENO5 scheme fails almost immediately without the PCP limiting procedure. In contrast, our GQL-based limiter successfully stabilizes the simulation even at $M_b=500$. All computed solutions exhibit well-resolved flow structures, with the leading Mach shock and the Kelvin-Helmholtz instabilities along the shear layer captured with high resolution. These results confirm that the proposed scheme combines strict physical robustness with excellent shock-capturing capabilities in handling complex, ultra-relativistic jet dynamics.
\end{example}

\begin{figure}[!htb]
	\centering
	\begin{subfigure}[t]{.32\textwidth}
		\centering
		\includegraphics[width=1\textwidth]{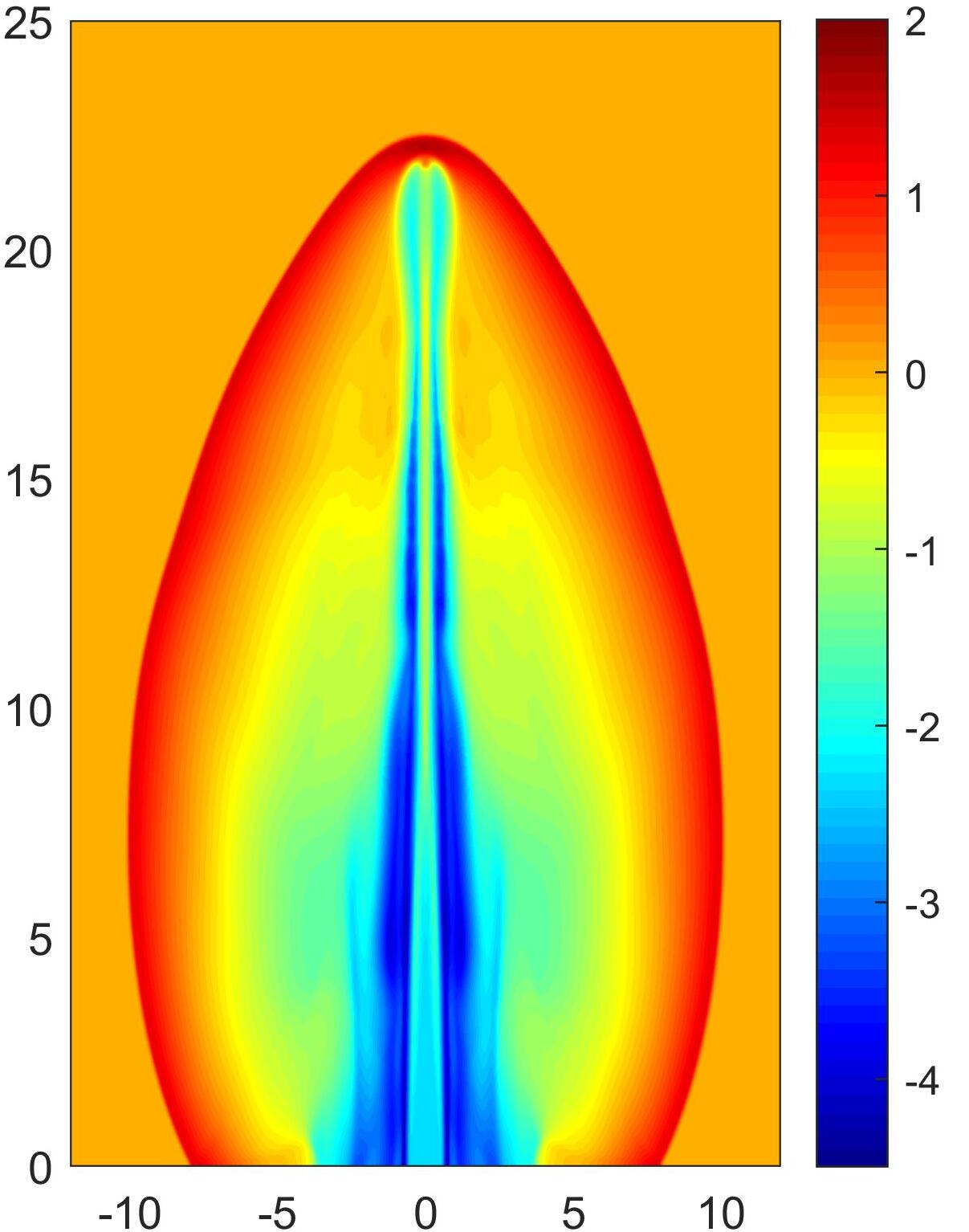}
	\end{subfigure}
    \begin{subfigure}[t]{.32\textwidth}
		\centering
		\includegraphics[width=1\textwidth]{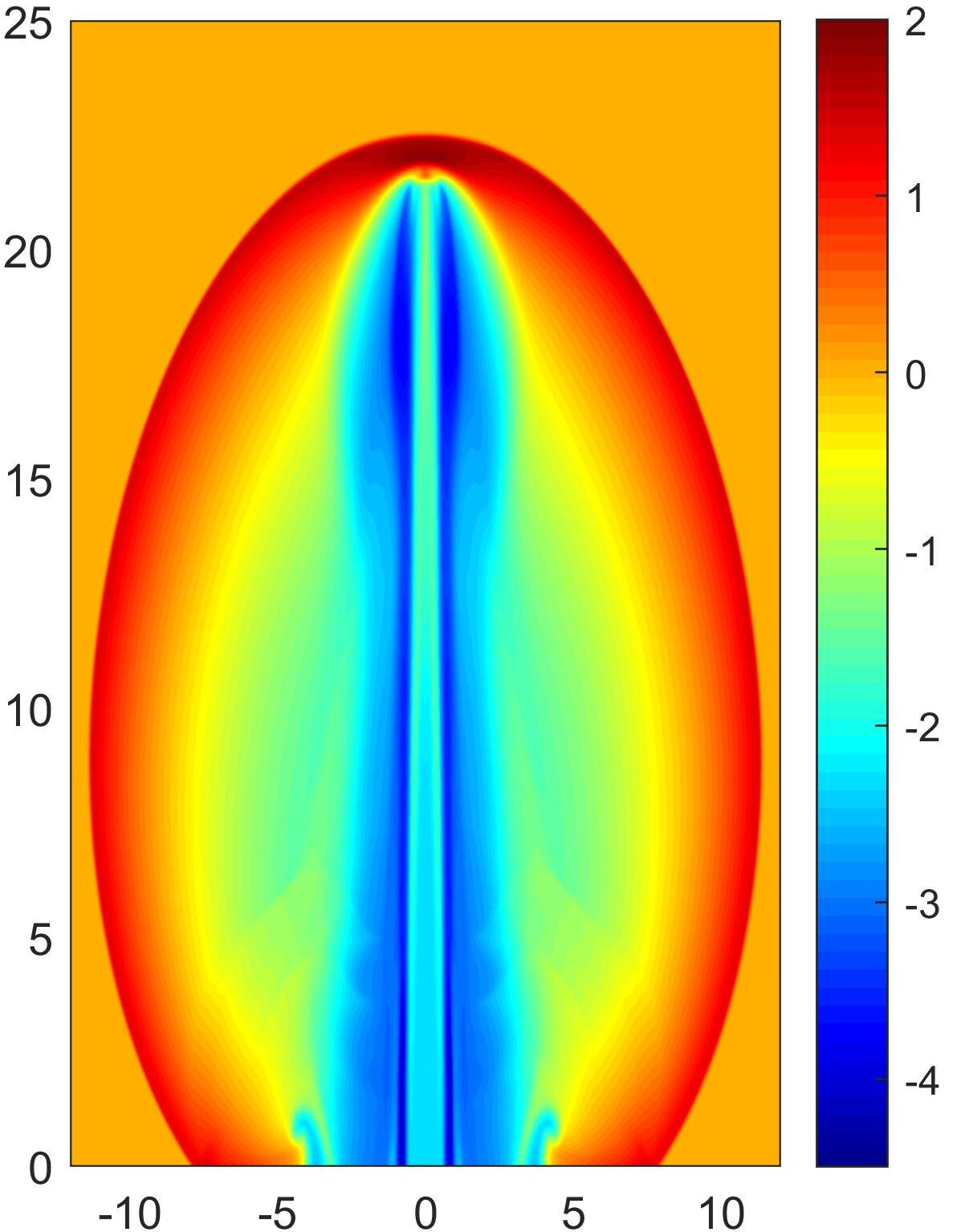}
	\end{subfigure}
 	\begin{subfigure}[t]{.32\textwidth}
		\centering
		\includegraphics[width=1\textwidth]{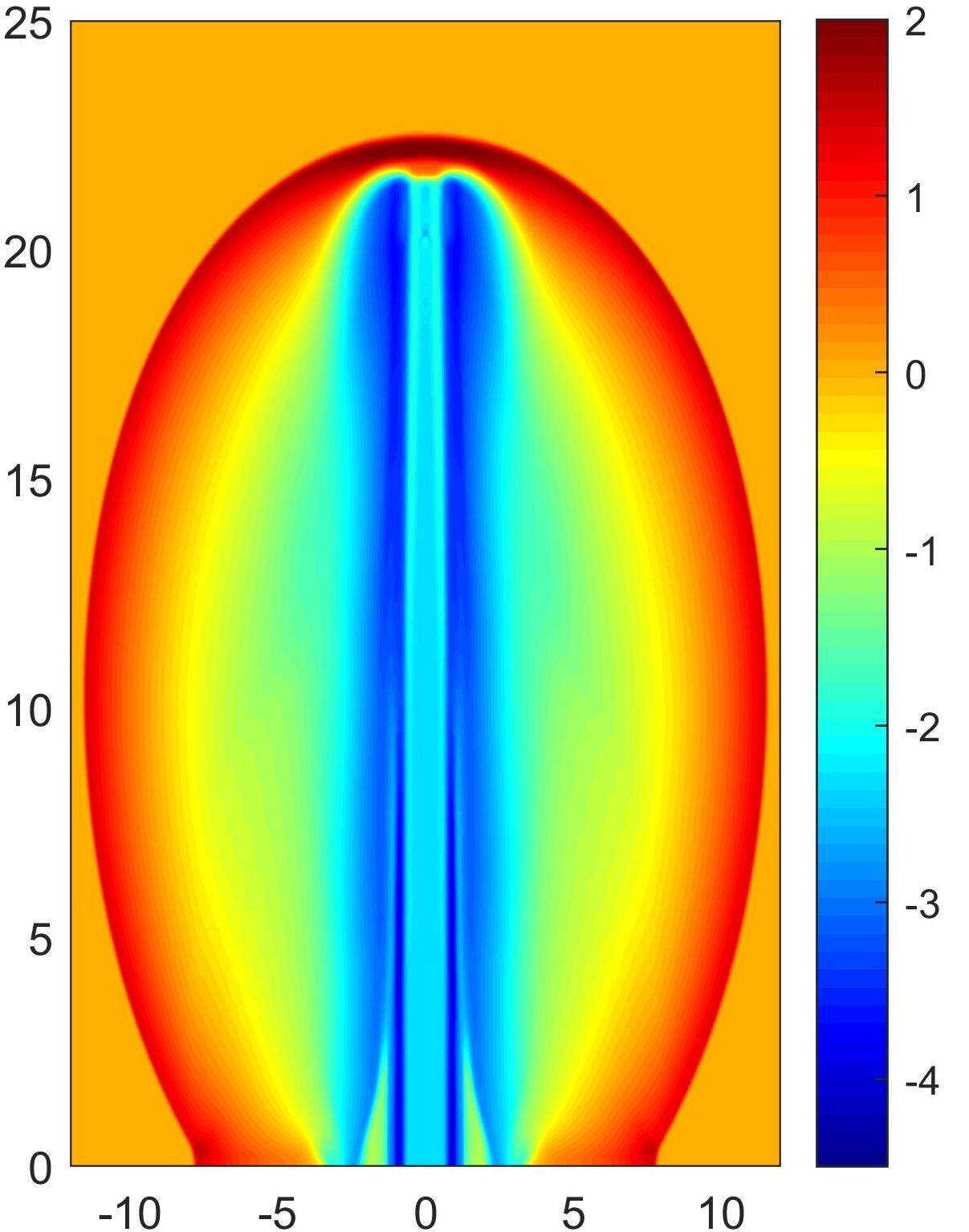}
	\end{subfigure}
    \begin{subfigure}[t]{.32\textwidth}
		\centering
		\includegraphics[width=1\textwidth]{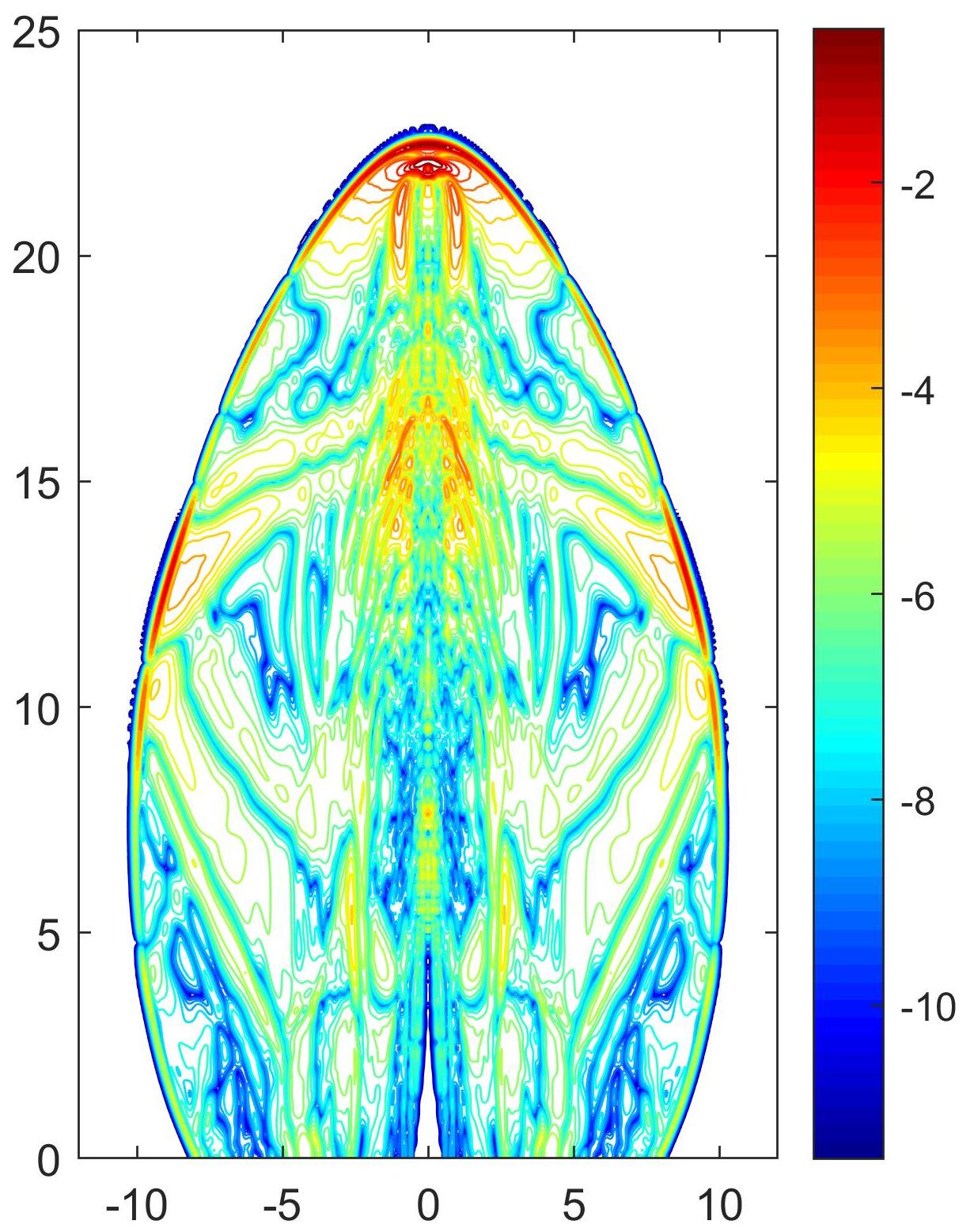}
	\end{subfigure}
	\begin{subfigure}[t]{.32\textwidth}
		\centering
		\includegraphics[width=1\textwidth]{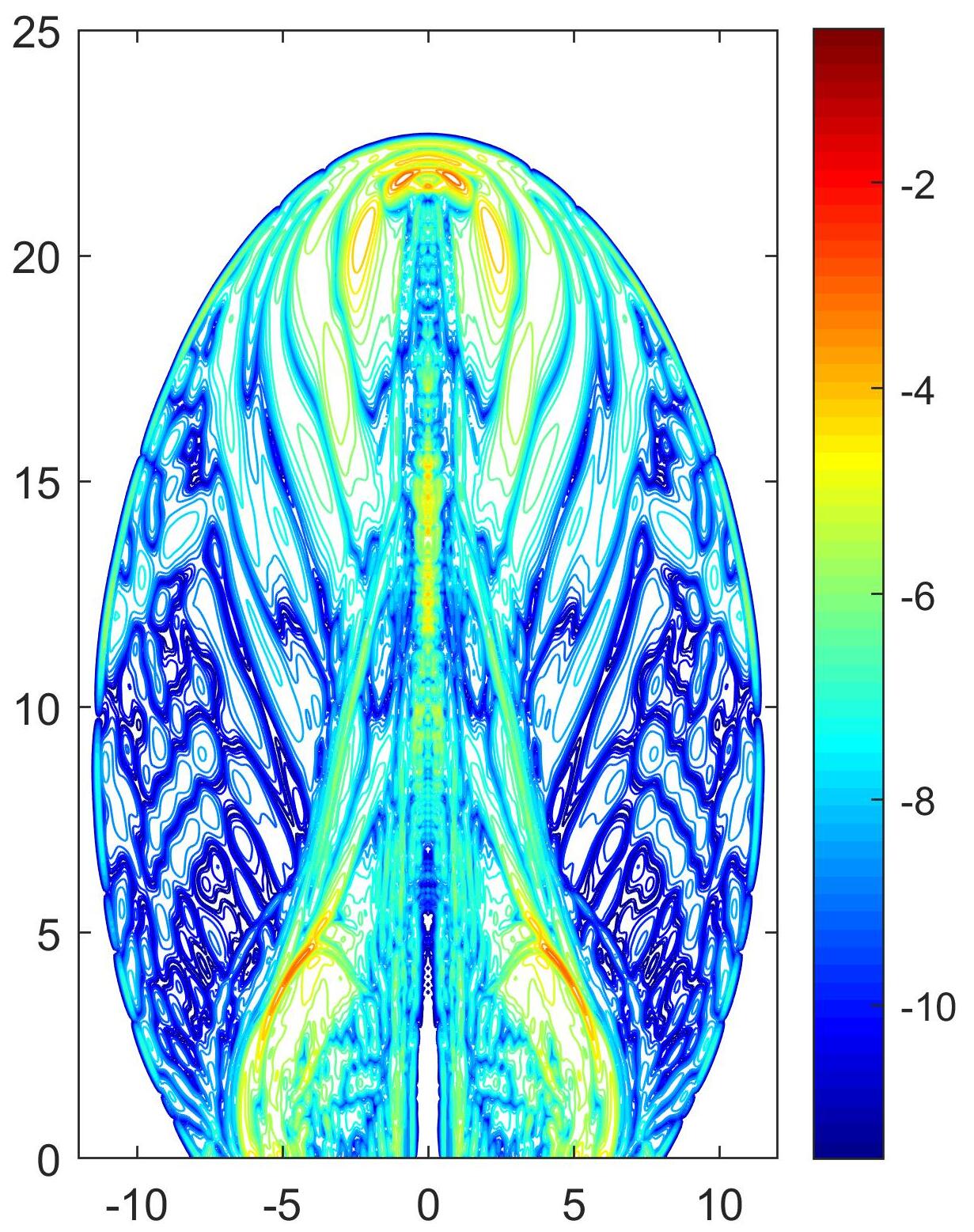}
	\end{subfigure}
    \begin{subfigure}[t]{.32\textwidth}
		\centering
		\includegraphics[width=1\textwidth]{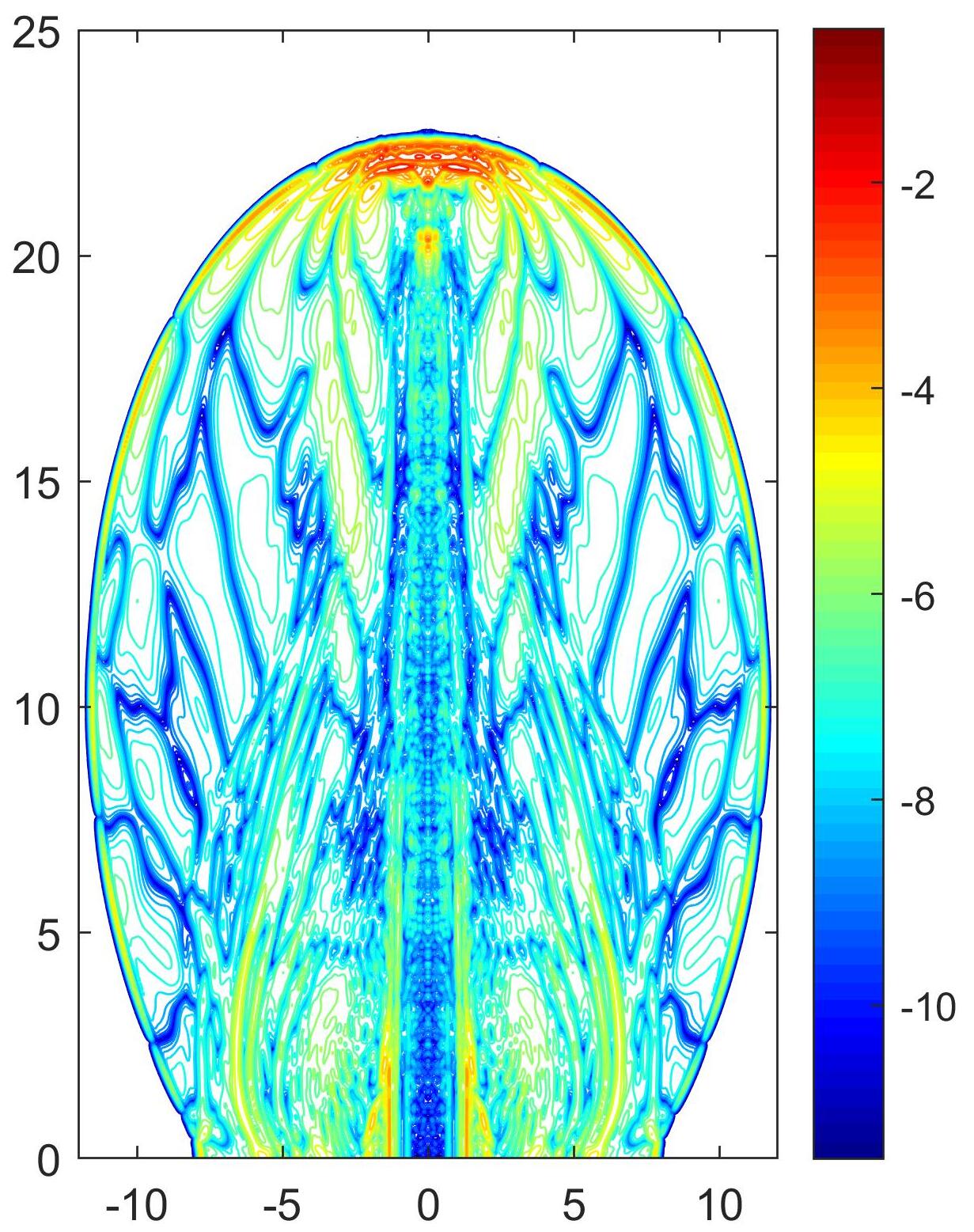}
	\end{subfigure}
	\caption{Example \ref{EX:jet}: Top: Schlieren images of $\ln\rho$ obtained using WENO5 with our flux limiter \eqref{16mat2D}; bottom: contours of the logarithm $\log(|\rho_{\text{\tt ex}}-\rho_{\text{\tt re}}|)$, where $\rho_{\text{\tt ex}}$ and $\rho_{\text{\tt re}}$ denote the density computed with the exact and relaxed flux limiters, respectively. 25 equally spaced contour levels from -11.5 to -0.5 are displayed. From left to right: configurations (i) at $t=30$, (ii) at $t=25$, and (iii) at $t=23$.}\label{Fig:2D_Jet_lnrho}
\end{figure}

\begin{figure}[!htb]
	\centering
	\begin{subfigure}[t]{.32\textwidth}
		\centering
		\includegraphics[width=1\textwidth]{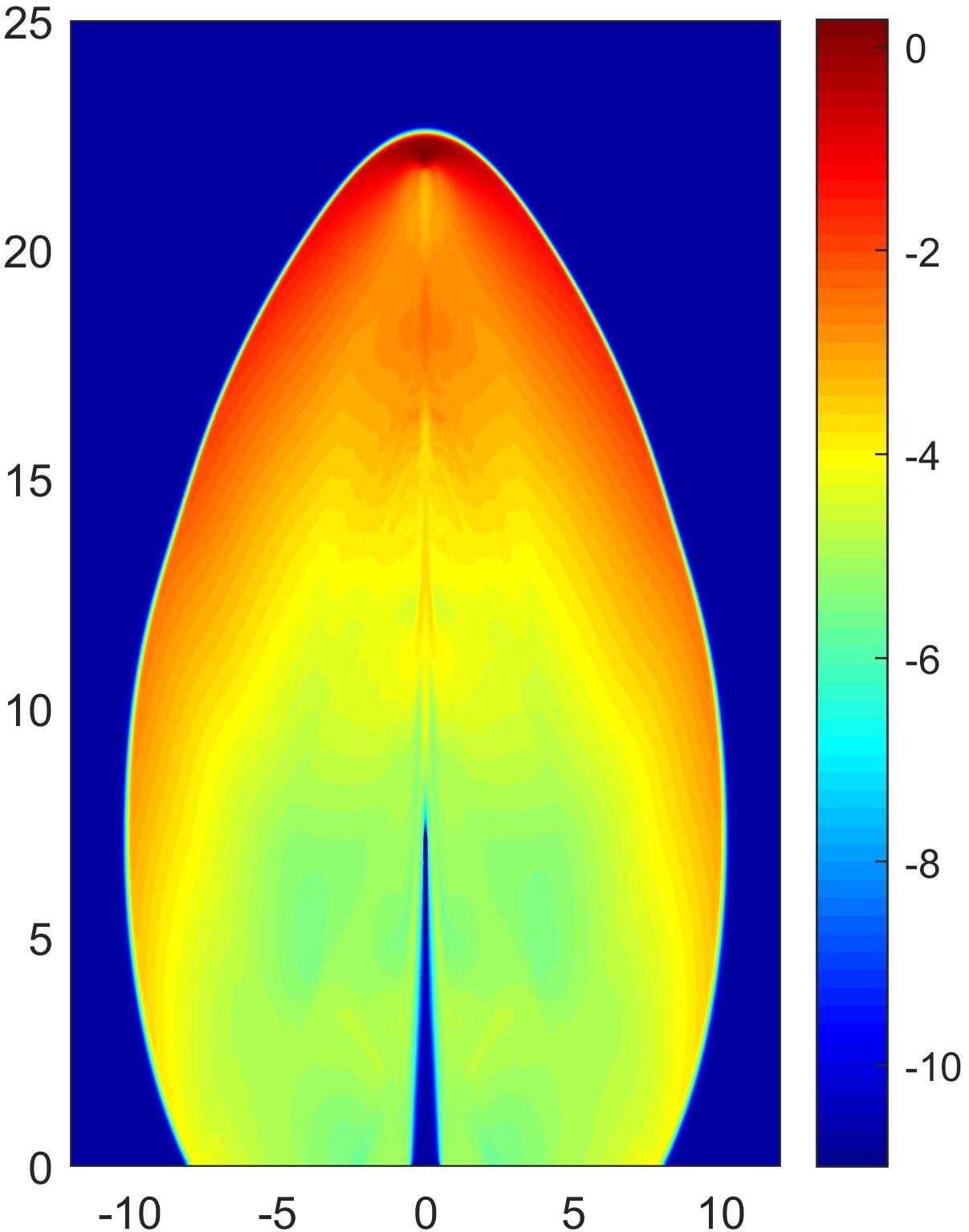}
	\end{subfigure}
    \begin{subfigure}[t]{.32\textwidth}
		\centering
		\includegraphics[width=1\textwidth]{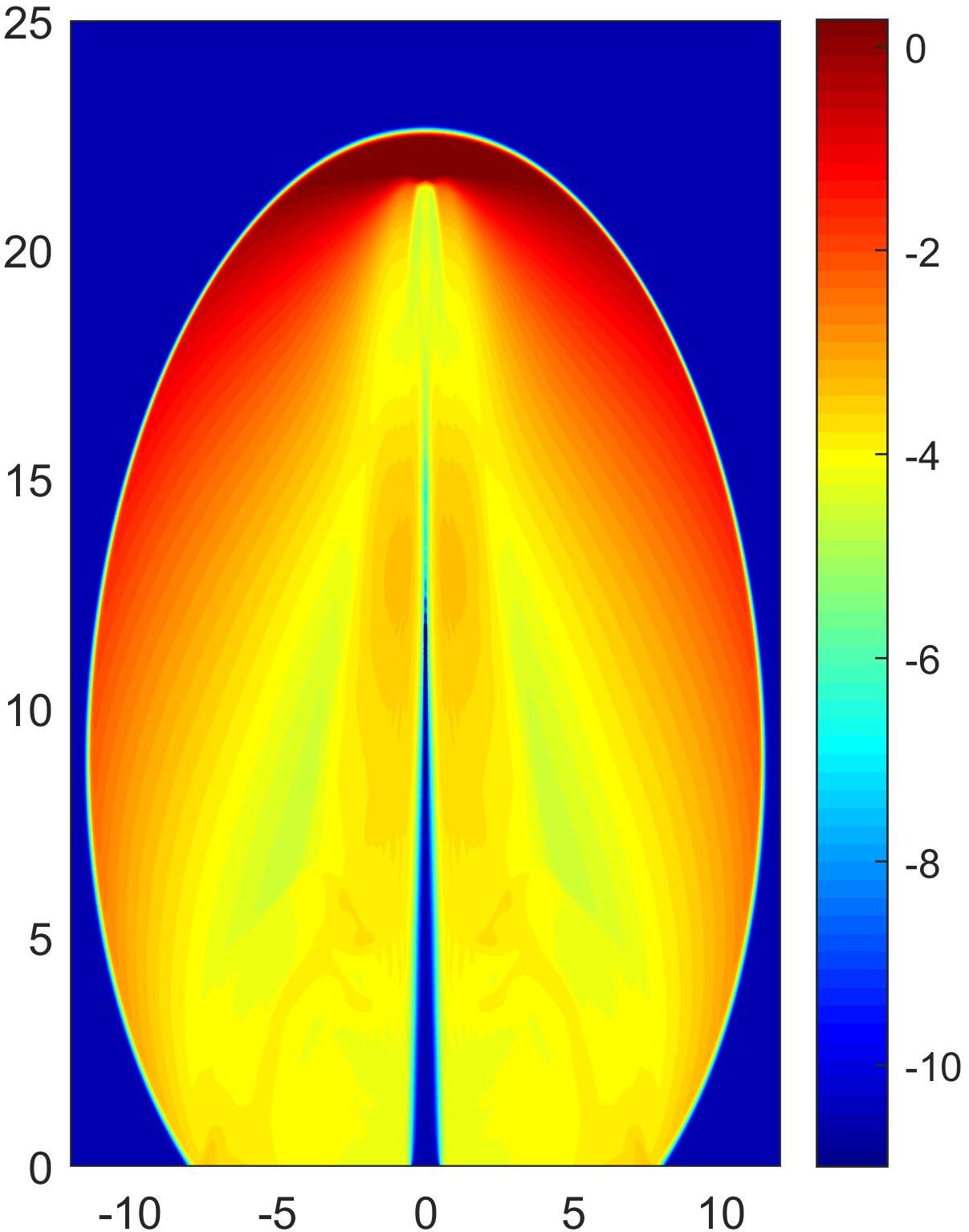}
	\end{subfigure}
 	\begin{subfigure}[t]{.32\textwidth}
		\centering
		\includegraphics[width=1\textwidth]{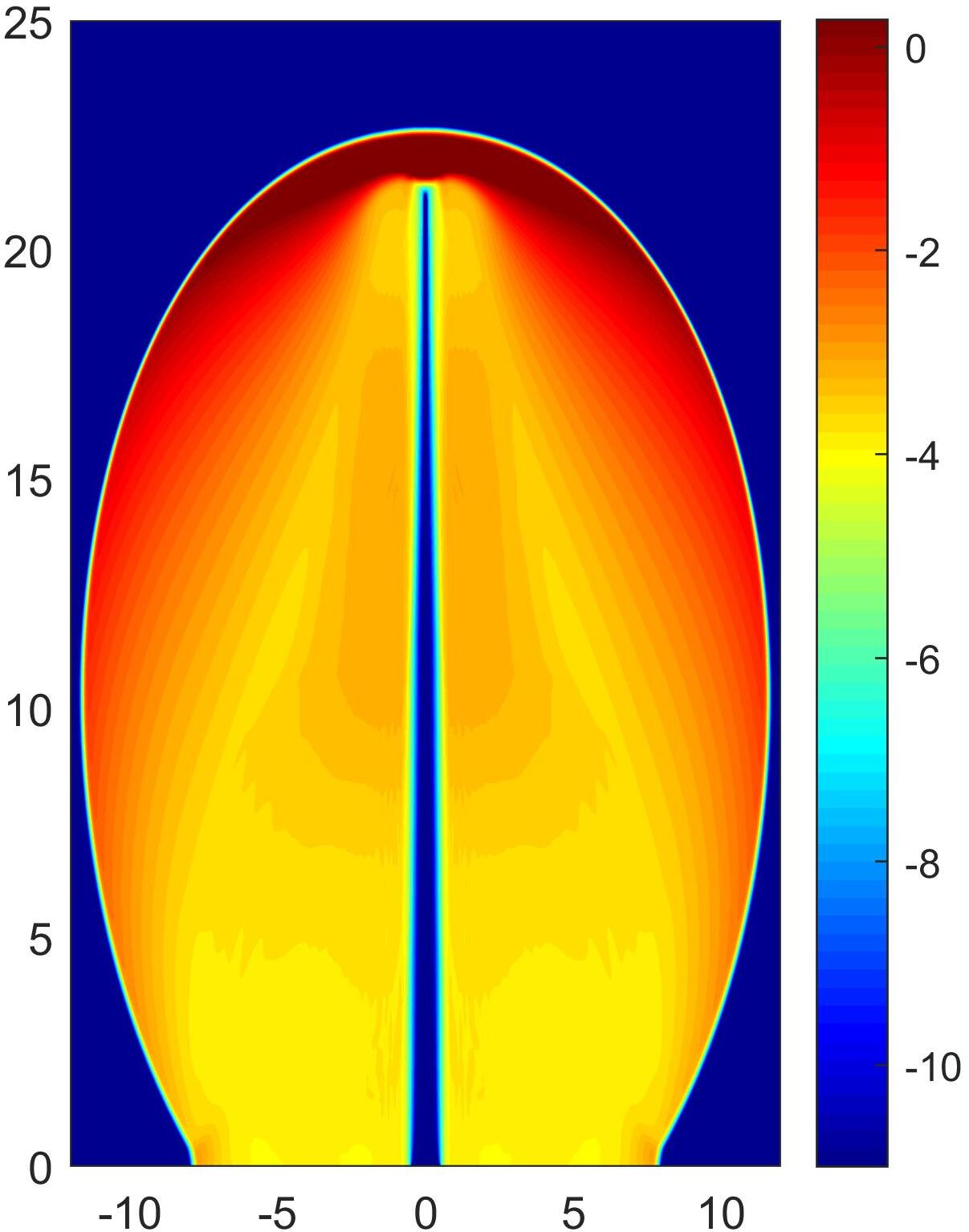}
	\end{subfigure}
    \begin{subfigure}[t]{.32\textwidth}
		\centering
		\includegraphics[width=1\textwidth]{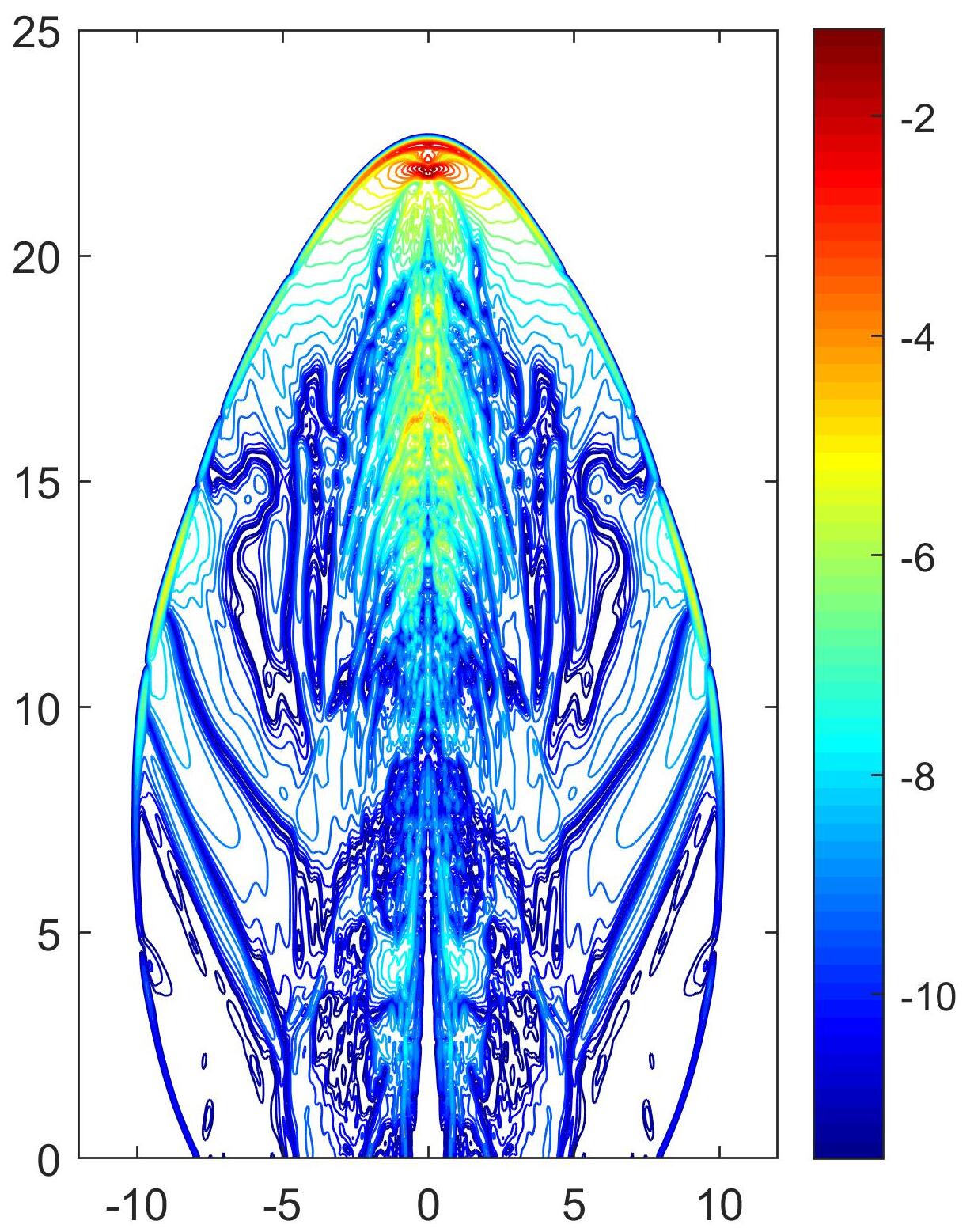}
	\end{subfigure}
	\begin{subfigure}[t]{.32\textwidth}
		\centering
		\includegraphics[width=1\textwidth]{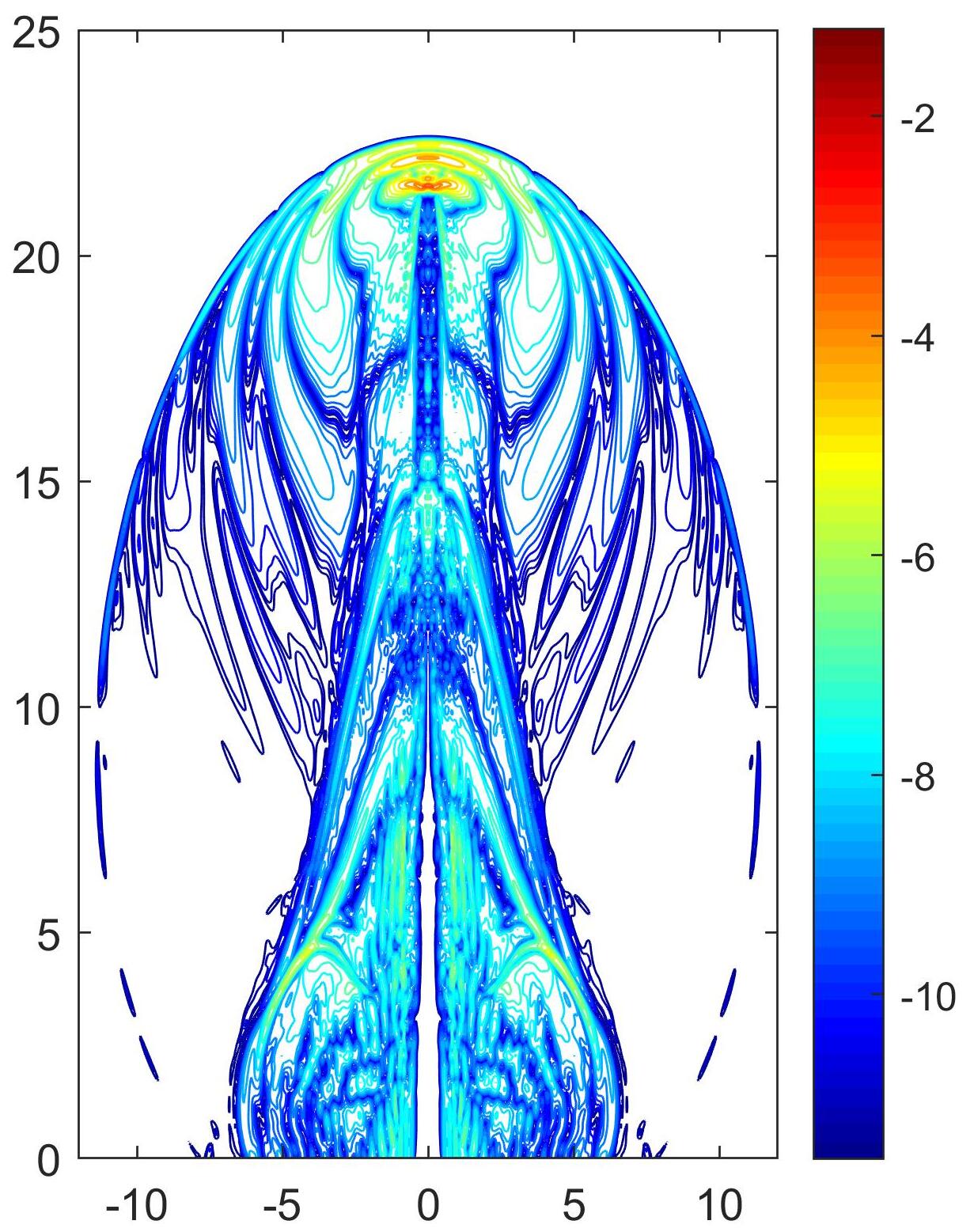}
	\end{subfigure}
    \begin{subfigure}[t]{.32\textwidth}
		\centering
		\includegraphics[width=1\textwidth]{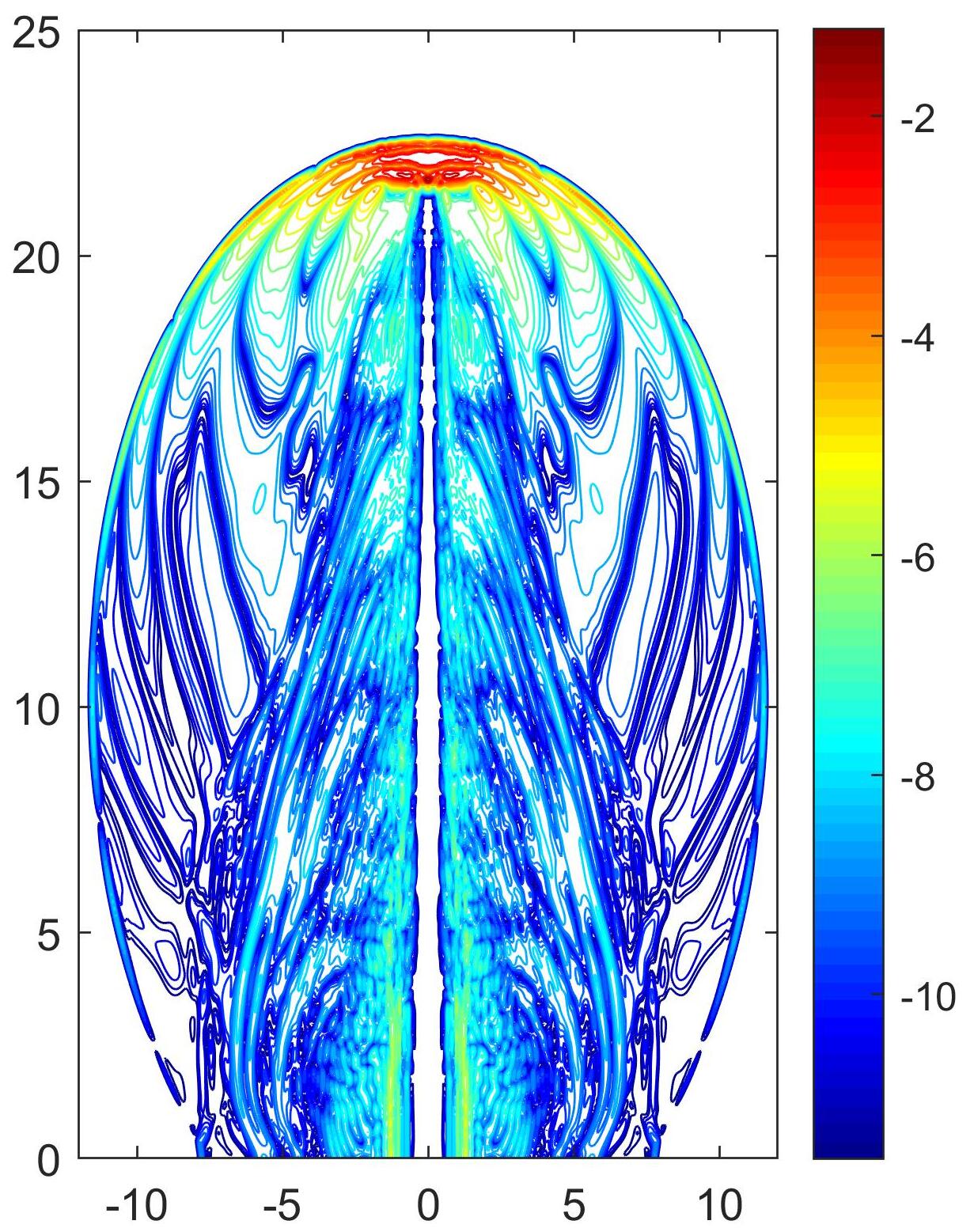}
	\end{subfigure}
	\caption{Example \ref{EX:jet}: Top: Schlieren images of $\ln p$ obtained using WENO5 with our flux limiter \eqref{16mat2D}; bottom: contours of the logarithm $\log(|p_{\text{\tt ex}}-p_{\text{\tt re}}|)$, where $p_{\text{\tt ex}}$ and $p_{\text{\tt re}}$ denote the pressure computed with the exact and relaxed flux limiters, respectively. 25 equally spaced contour levels from -11.5 to -1.2 are displayed. From left to right: configurations (i) at $t=30$, (ii) at $t=25$, and (iii) at $t=23$.}\label{Fig:2D_Jet_lnpre}
\end{figure}

\begin{example}{(2D Axisymmetric relativistic jets)} \label{EX:Axisjet}
This test considers an axisymmetric relativistic jet simulation in cylindrical coordinates $(r,z)$. Compared with the rectangular coordinates $(x,y)$, the RHD equations in $(r,z)$ coordinates
\begin{equation*}
    \frac{\partial{\bf U}}{\partial t}+\frac{\partial{\bf F}_1({\bf U})}{\partial r}+\frac{\partial {\bf F}_2({\bf U})}{\partial z} = {\bf S}({\bf U}, r),
\end{equation*}
are different by the source term
\begin{equation*}
    {\bf S}({\bf U}, r) = -\frac{1}{r}\left(Dv_1,m_1v_1,m_2v_1,m_1\right)^\top.
\end{equation*}
In the case of rectangular coordinates, we have ${\bf U}_{ij}+\Delta t\mathscr{L}({\bf U}_{ij})\in\mathcal{G}$ by enforcing our flux limiter. When transforming into cylindrical coordinates, we should update ${\bf U}_{ij}+\Delta t\left(\mathscr{L}({\bf U}_{ij})+{\bf S}({\bf U}_{ij}, r_i)\right)$ in each time step. To analyze the possible extra PCP condition caused by the source term, we rewrite
\begin{equation*}
    {\bf U}_{ij}+\Delta t\left(\mathscr{L}({\bf U}_{ij})+{\bf S}({\bf U}_{ij}, r_i)\right)
    = (1-\beta)\left({\bf U}_{ij}+\frac{\Delta t}{1-\beta}\mathscr{L}({\bf U}_{ij})\right)
    + \beta \left({\bf U}_{ij}+\frac{\Delta t}{\beta}{\bf S}({\bf U}_{ij}, r_i)\right),\quad \beta \in (0,1).
\end{equation*}
It is sufficient to ensure that ${\bf U}_{ij}+\frac{\Delta t}{1-\beta}\mathscr{L}({\bf U}_{ij})\in\mathcal{G}$ and ${\bf U}_{ij}+\frac{\Delta t}{\beta}{\bf S}({\bf U}_{ij}, r_i)\in\mathcal{G}$ when ${\bf U}_{ij}\in\mathcal{G}$ for all $i,j$. The first part ${\bf U}_{ij}+\frac{\Delta t}{1-\beta}\mathscr{L}({\bf U}_{ij})\in\mathcal{G}$ can be achieved by enforcing our flux limiter after replacing $\Delta t$ in ${Q}_{ij}^-:=\frac{\Delta x\Delta y}{\Delta t}$ by $\frac{\Delta t}{1-\beta}$. The second part is true if 
\begin{equation}\label{source:beta}
    \Delta t \leq \beta A_s,\quad
    A_s:=\min\limits_{\{i,j\}\in\mathcal{P}_v}\left\{\frac{i\Delta r\ q({\bf U}_{ij})}{\left(p({\bf U}_{ij})+q({\bf U}_{ij})\right)\left|v_1({\bf U}_{ij})\right|}\right\},
\end{equation}
where $\mathcal{P}_v=\{(i,j):\ i,j\in\mathbb{Z},\ v_1({\bf U}_{ij}) > 0\}$.

We adopt the pressure-matched, highly supersonic C2 jet model, also referred to as the cold jet. Initially, the computational domain $[0,15]\times[0,45]$ is filled with the ambient medium, whose state is set as
\begin{equation*}
    {\bf V}(r,z,0) = \left(1,0,0,1.70304823218172071\times10^{-4}\right)^\top.
\end{equation*}
The settings of boundary conditions are the same as those in Example \ref{EX:jet} except for two minor differences. The rest-mass density and the injected velocity of the jet beam are $\rho^b=0.01$ and $v^b=0.99$ in this cold jet model, and the nozzle on the bottom boundary $y=0$ is changed into $\{z=0,\ r\leq1\}$. The beam Mach number is $M_b=6$, corresponding to a relativistic Mach number $M_r\approx41.95$. 

{
The left panel of Fig. \ref{Fig:2D_AxisJet2} shows the schlieren images of the logarithmic rest-mass density obtained with exact flux limiter \eqref{16mat2D} within the symmetric domain $[-15,15]\times[0,45]$ at $t = 100$ on a uniform $360\times1080$ mesh. }
The simulation employs WENO5 coupled with both versions of our flux limiter \eqref{16mat2D} and \eqref{relax2D}, in which the source terms are specially treated by setting $\beta = 0.1$ in \eqref{source:beta} to maintain solution admissibility. The results clearly show the bow shock at the jet head, large-scale vortex structures generated by Kelvin--Helmholtz instabilities, and the resulting turbulent mixing region. These morphological features align well with results reported in \cite{2006Zhang, chen2022physical, WuTang2015}, which were obtained using adaptive mesh refinement RHD codes or high-order positivity-preserving WENO methods. 
{
The right panel of Fig. \ref{Fig:2D_AxisJet2} shows the pointwise logarithmic absolute difference between the exact \eqref{16mat2D} and relaxed \eqref{relax2D} limiters.
Quantitatively, the maximum absolute difference of rest-mass density between the exact and relaxed limiters reaches 1.727, occurring on the symmetry axis $x=0$, where the leading bow shock is located; the corresponding density values from the exact and relaxed schemes are 2.6583 and 4.3853, respectively. For pressure, the maximum absolute difference is 0.1284, also concentrated on the symmetry axis, with the exact and relaxed values being 0.1454 and 0.0169 at the peak point. This deviation pattern is physically reasonable: the relaxed limiter introduces a small amount of extra dissipation as a conservative guarantee of the PCP property, and this effect accumulates most prominently in the high-gradient bow shock region where the limiter is activated most frequently. The average absolute differences are approximately $7.861\times10^{-3}$ for density and $2.321\times10^{-4}$ for pressure, both orders of magnitude smaller than the local peak. Therefore, the minor accuracy loss is overall acceptable when both limiters strictly enforce the physical admissibility constraints across the entire domain.
}

{
Fig.~\ref{Fig:2D_AxisJet2_LimiterPercent} presents the limiter activation frequency maps at two representative time instants, $t=20$ and $t=100$. {For comparison, we adopt a higher-contrast colormap and unify the colorbar range for both subfigures.} The color scale represents the proportion of time steps during which the limiter was activated at each cell, i.e., the fraction of time steps where one of the four limiter coefficients $\theta_{i\pm1/2,j}$ and $\theta_{i,j\pm1/2}$ is strictly less than $1$. Temporally, the maximum activation frequency decreases from 27.5\% at the early stage to 19.95\% at $t=100$, with denser contours in the early phase. This arises as transient sharp shocks during initial jet penetration challenge WENO5 reconstruction, while mature turbulent structures are better resolved with fewer spurious oscillations. Spatially, {the activation pattern matches the flow structures well: the highest frequency concentrates on the symmetry axis where flow conditions are most demanding, and periodic stripes correspond to the Kelvin–Helmholtz vortices along the shear layer. This confirms that the limiter activates locally and adaptively with strong flow features.} 
}

\end{example}

\begin{figure}[!htb]
	\centering
 	\begin{subfigure}[t]{.48\textwidth}
		\centering
		\includegraphics[width=0.96\textwidth]{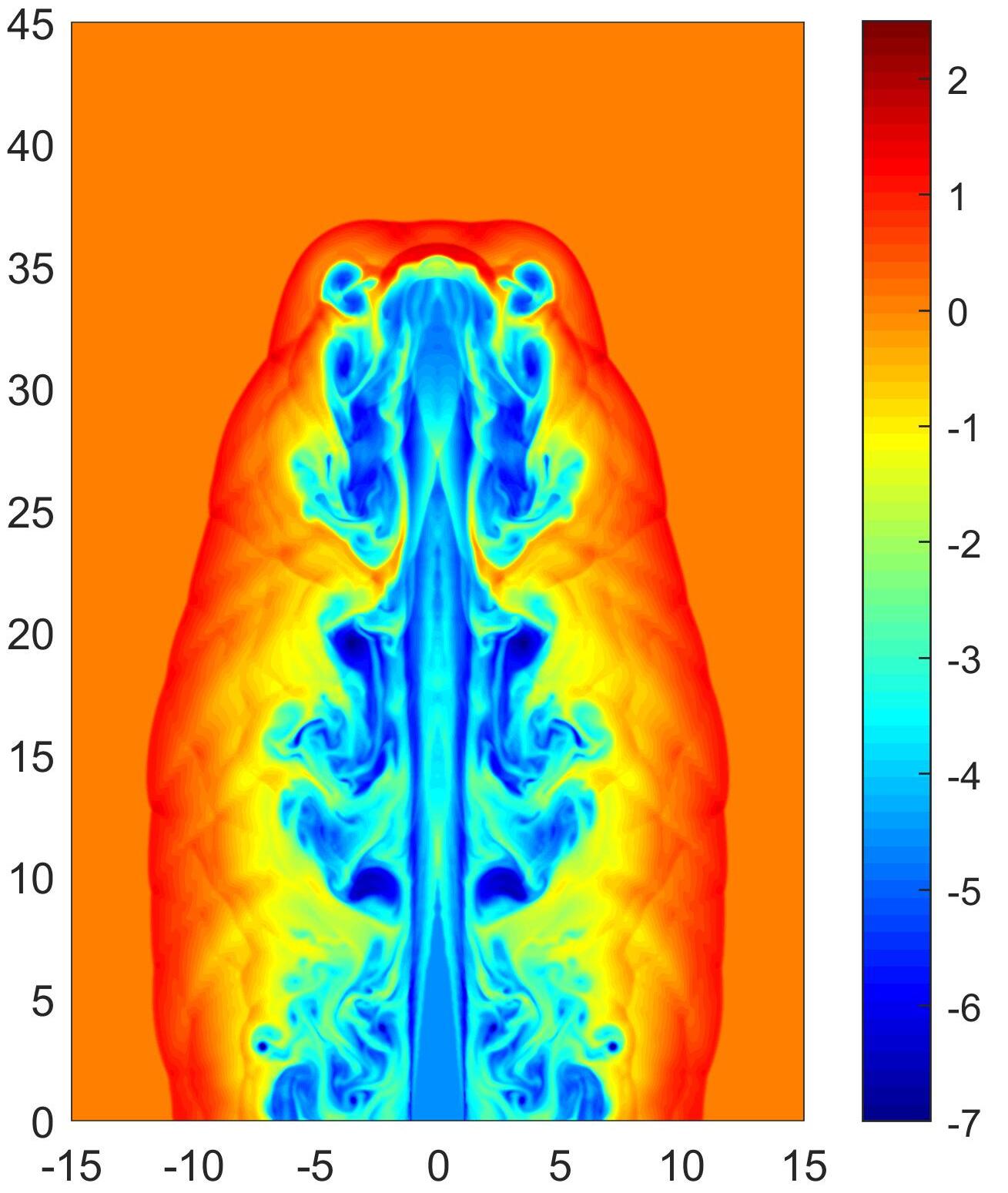}
	\end{subfigure}
    \begin{subfigure}[t]{.48\textwidth}
		\centering
		\includegraphics[width=0.96\textwidth]{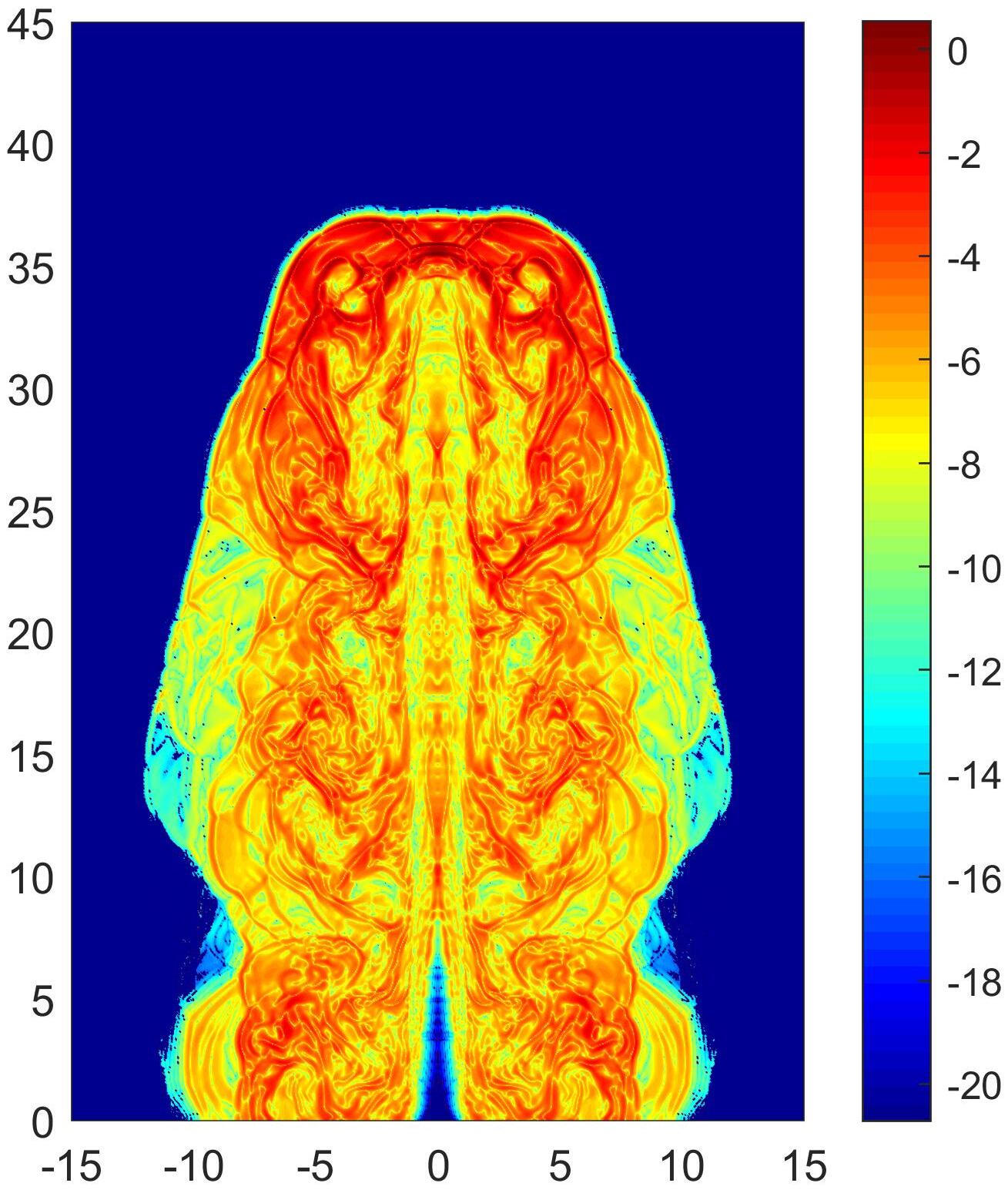}
	\end{subfigure}
	\caption{Example \ref{EX:Axisjet}: Left: Schlieren images of $\ln\rho$ obtained using WENO5 with our flux limiter \eqref{16mat2D} at $t = 100$; bottom: Schlieren images of the logarithm $\log(|\rho_{\text{\tt ex}}-\rho_{\text{\tt re}}|)$ at $t = 100$, where $\rho_{\text{\tt ex}}$ and $\rho_{\text{\tt re}}$ denote the density computed with the exact and relaxed flux limiters, respectively.}\label{Fig:2D_AxisJet2}
\end{figure}

\begin{figure}[!htb]
	\centering
 	\begin{subfigure}[t]{.48\textwidth}
		\centering
		\includegraphics[width=0.96\textwidth]{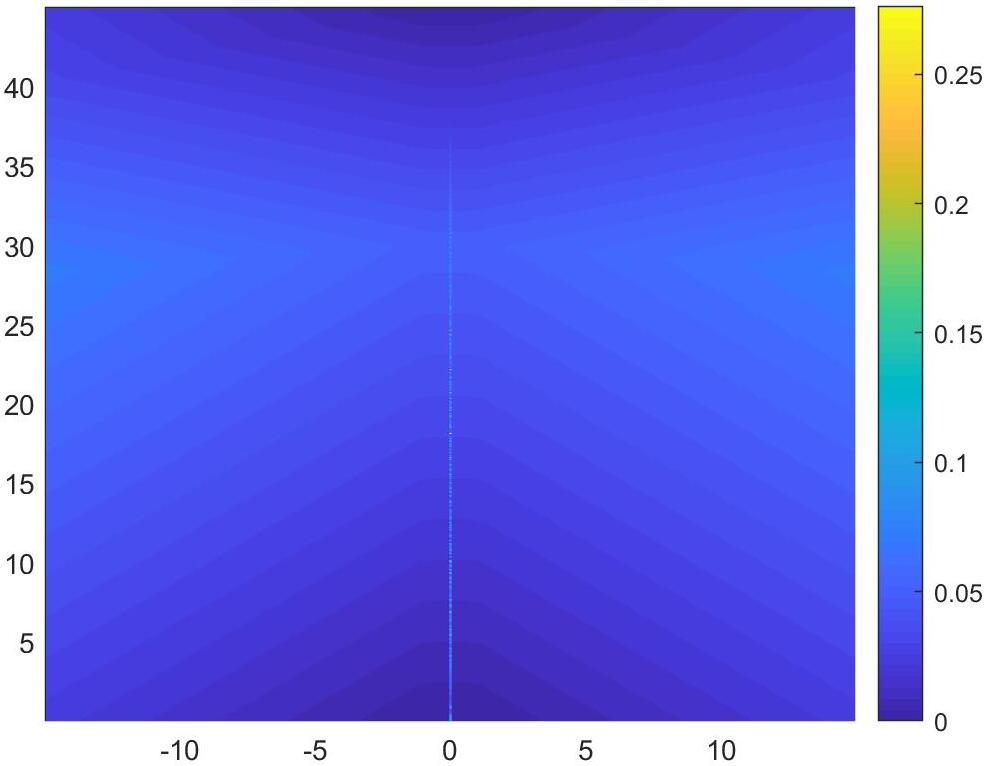}
	\end{subfigure}
    \begin{subfigure}[t]{.48\textwidth}
		\centering
		\includegraphics[width=0.96\textwidth]{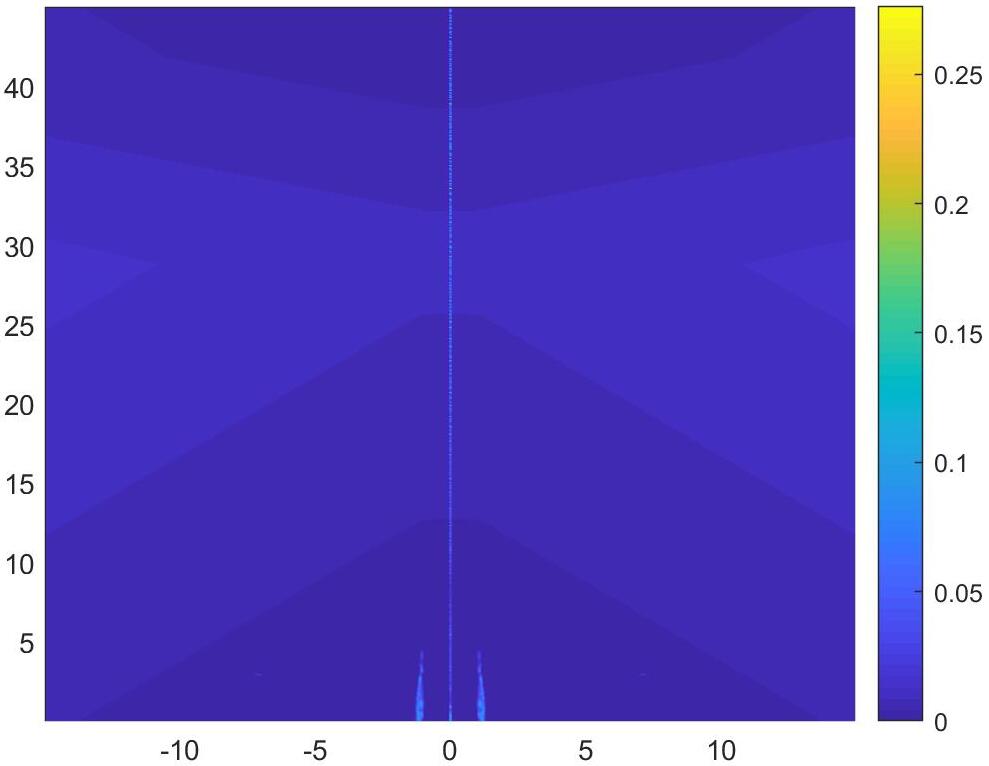}
	\end{subfigure}
	\caption{Example \ref{EX:Axisjet}: Limiter activation frequency maps. Left: $t=20$; right: $t=100$. Both subfigures share a unified color scale for cross-time comparison, and activation patterns align with the key flow structures. The color scale represents the proportion of time steps during which the limiter was activated at each cell (zero means that the limiter is not activated).}\label{Fig:2D_AxisJet2_LimiterPercent}
\end{figure}

\begin{example}[3D Shock-bubble interaction problems]\label{Ex:SB}
The next 3D example simulates the shock-bubble interaction problem within the computational domain $[0,325]\times[0,90]\times[0,90]$. Reflective boundary conditions are applied at $y=0$, $y=90$, $z=0$, and $z=90$, inflow boundary conditions at $x=325$, and outflow boundary conditions at $x=0$. 
The left and right states of the shock are initialized as follows:
\begin{equation*}
  \mathbf{V}(x,y,z,0) = 
    \begin{cases}
    {(1, 0, 0, 0, 0.05)^\top}&  x<265,\\
    {(1.865225080631180,-0.196781107378299, 0, 0, 0.15)^\top}& x>265. 
    \end{cases}
\end{equation*}
The state of the bubble differs from that in {the numerical RHD study} \cite{duan2021entropy}: 
\begin{equation*}
    \textbf{V}(x,y,z,0) = {(0.01358, 0, 0, 0, 0.05)^\top},\quad\quad\quad\sqrt{(x-215)^2+(y-45)^2+(z-45)^2}\leq 25.
\end{equation*}
The density of the bubble is specifically adjusted to construct a demanding scenario, which imposes strict requirements on the numerical scheme, making the adoption of our proposed flux limiter indispensable for achieving stable and accurate simulations. 

\begin{figure}[!htb]
	\centering
    \begin{subfigure}[t]{.48\textwidth}
    	\includegraphics[width=1\textwidth]{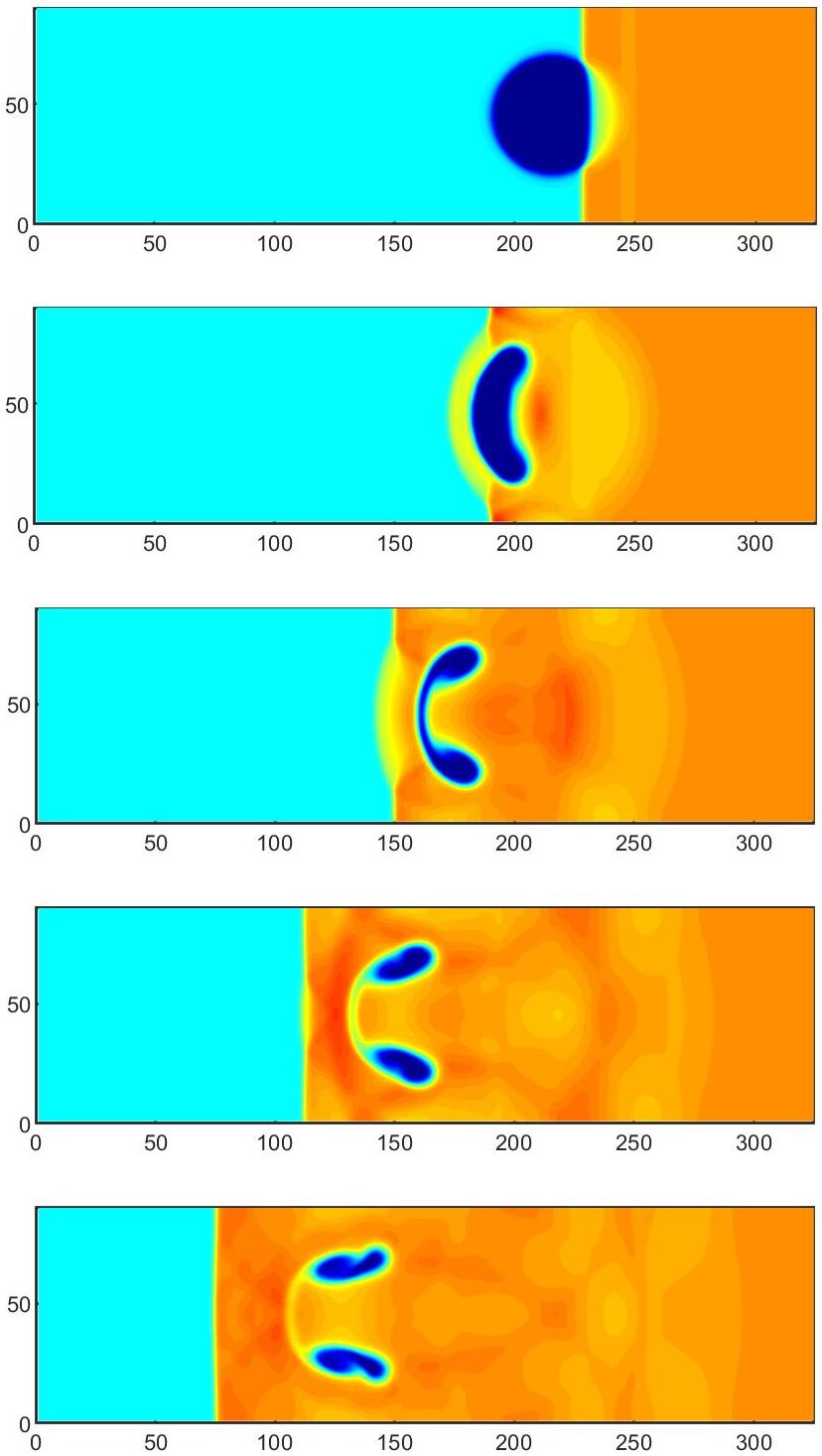}
     \end{subfigure}
     \begin{subfigure}[t]{.48\textwidth}
    	\includegraphics[width=1\textwidth]{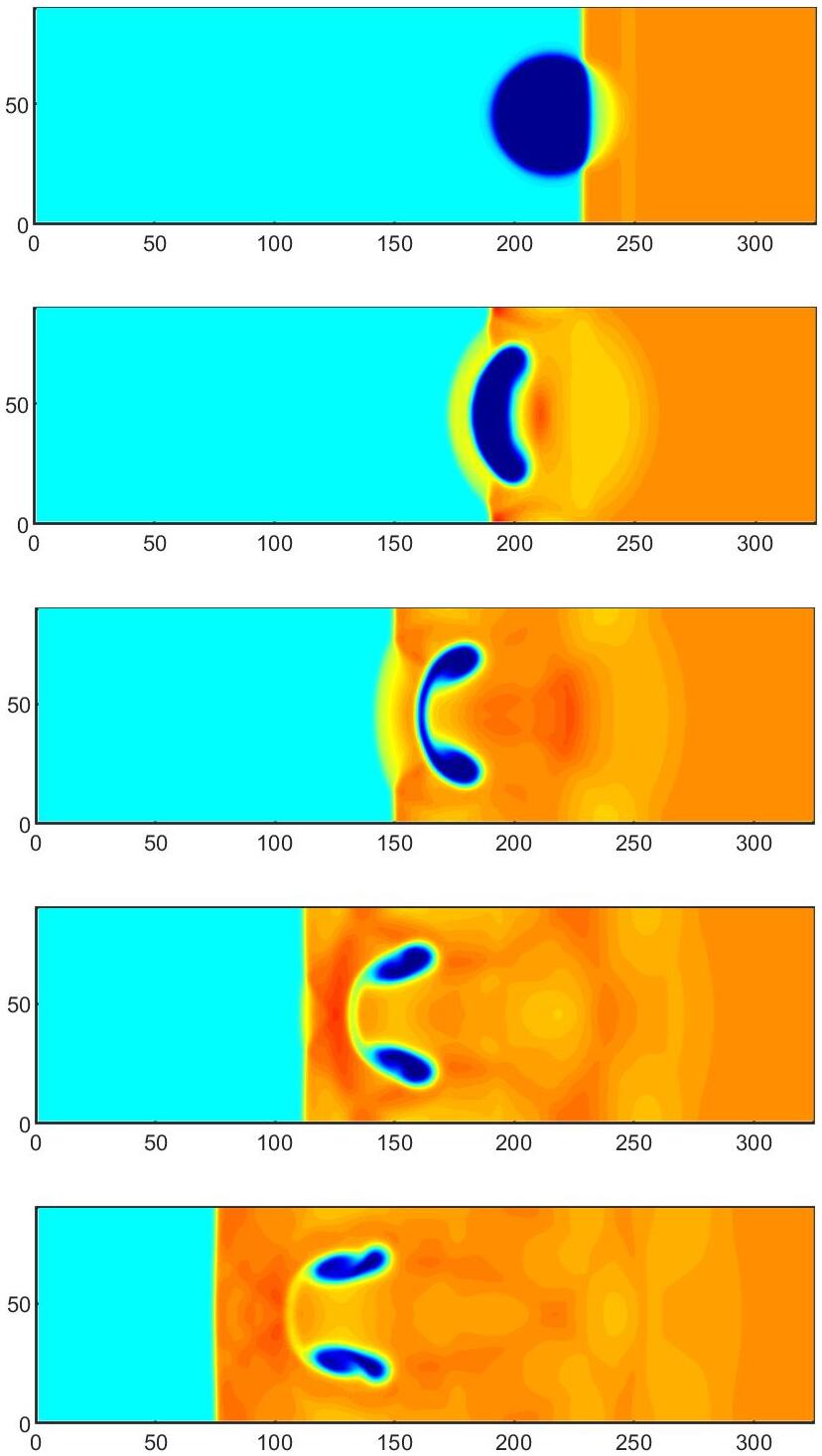}
     \end{subfigure}
	\caption{Example \ref{Ex:SB}: Schlieren images of $\rho$ at $t=90,180,270,360,450$ (from top to bottom). Left: on the slice {$z=45$}; right: on the slice {$y=45$}. The two orthogonal slices jointly demonstrate the rotational symmetry of the 3D flow field.}\label{Fig:2D_shockbb_2Dslice}
\end{figure}

\begin{figure}[!htb]
	\centering
	\begin{subfigure}[t]{.32\textwidth}
		\centering
		\includegraphics[width=1\textwidth]{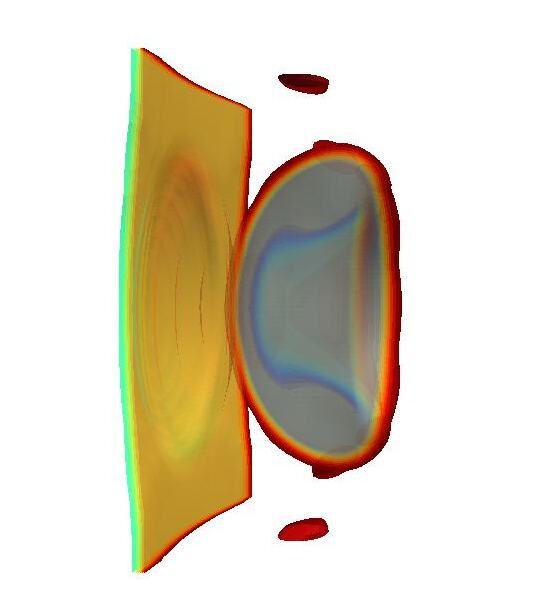}
		\caption{$t = 270$.}
	\end{subfigure}
    \begin{subfigure}[t]{.32\textwidth}
		\centering
		\includegraphics[width=1\textwidth]{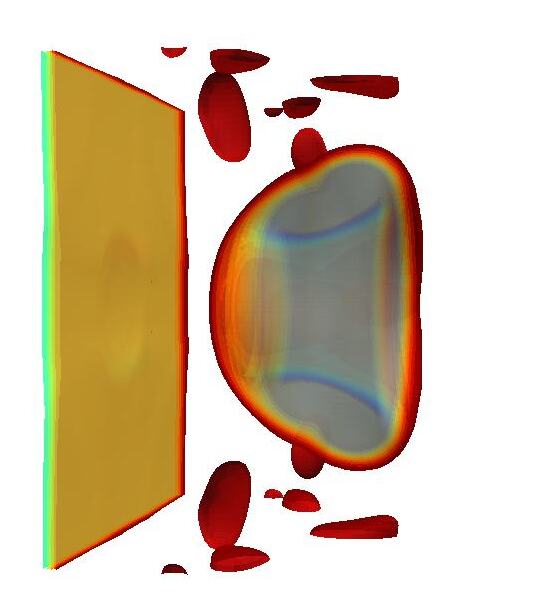}
		\caption{$t = 360$.}
	\end{subfigure}
 	\begin{subfigure}[t]{.32\textwidth}
		\centering
		\includegraphics[width=1\textwidth]{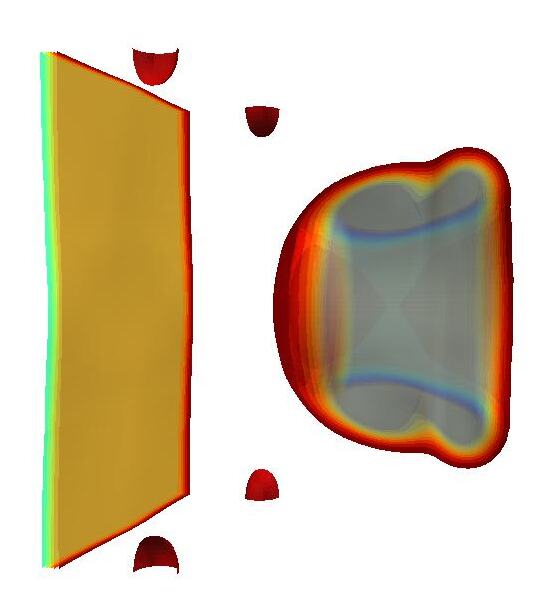}
		\caption{$t = 450$.}
	\end{subfigure}
	\caption{Example \ref{Ex:SB}: 10 iso-surfaces of $\rho$ equally spaced from 0.55 to 1.75, obtained using WENO5 with the relaxed PCP flux limiter \eqref{relax3D}.}\label{Fig:3D_SB_rho}
\end{figure}

To comprehensively visualize the interaction between the shock and the bubble, Fig. \ref{Fig:2D_shockbb_2Dslice} presents the schlieren images of the rest-mass density $\rho$ at $t=90,180,270,360,450$ on the mid-plane slices {$y=45$ and $z=45$}. 
{
The two orthogonal mid-plane slices complement each other: they capture the shock-bubble interaction dynamics from different viewing angles and provide intuitive evidence that the numerical scheme well preserves the rotational symmetry of the initial configuration.
}
Meanwhile, Fig. \ref{Fig:3D_SB_rho} displays the 3D iso-surfaces of $\rho$ at three later time instants ($t = 270,\ 360,\ 450$), consisting of 10 equally spaced iso-values ranging from 0.55 to 1.75. The 3D visualization intuitively depicts the complete three-dimensional dynamic process of the shock-bubble interaction, capturing the spatial morphology of the bubble deformation, shock propagation, and mutual interaction with high fidelity. 
All numerical solutions are obtained using WENO5 with our flux limiter on $325\times90\times90$ uniform grids. As demonstrated by both the 2D schlieren images and 3D iso-surface plots, our scheme effectively captures the dynamics of the interaction between the left-moving shock and the bubble.
\end{example}

\begin{example}{(3D Axisymmetric relativistic jets)}\label{EX:Axisjet3D}
This 3D example is based on the pressure-matched hot A1 model in the 2D case. The adiabatic index $\Gamma = 4/3$. The computational domain is $[0,7]\times[0,7]\times[0,50]$, which is initially filled with uniform static gas with the state
\begin{equation*}
    {\bf V}(x,y,z,0) = \left(1,0,0,0,0.40611878453038897\right)^\top.
\end{equation*}
Through the bottom boundary circular region $\{z=0,\ \sqrt{x^2+y^2}\leq1\}$, a light jet beam with the state $$(\rho^b,v_1^b,v_2^b,v_3^b,p^b) = (0.01,0,0,0.99,0.40611878453038897)$$ is injected parallel to the $z$-axis. The boundary conditions are specified in a manner very similar to the 2D case. The fixed jet beam inflow is imposed on $\{z=0,\ \sqrt{x^2+y^2}\leq1\}$, reflective conditions are applied on planes $x = 0$ and $y = 0$, and outflow conditions are specified on the remaining boundaries.

{Note that the computational domain for this simulation is $[0,7]\times[0,7]\times[0,50]$, which covers one quadrant of the full axisymmetric domain. For the visualization in Fig.~\ref{Fig:3D_AxisJet1_FL_Relax_yhalf}, we mirror the computed solution exactly across the $x=0$ symmetry plane to display the symmetric half-domain $[-7,7]\times[0,7]\times[0,50]$; this post-processing does not alter the numerical solution.} {Fig. \ref{Fig:3D_AxisJet1_FL_Relax_yhalf_density_T20}, \ref{Fig:3D_AxisJet1_FL_Relax_yhalf_density_T40}, and \ref{Fig:3D_AxisJet1_FL_Relax_yhalf_density_T60} present} 15 iso-surfaces of $\ln\rho$ in the symmetric region $[-7,7]\times[0,7]\times[0,50]$ at $t = 20,\ 40$, and 60, using a uniform $70\times70\times500$ mesh. {The corresponding velocity fields are visualized in Fig. \ref{Fig:3D_AxisJet1_FL_Relax_yhalf_vel_T20}, \ref{Fig:3D_AxisJet1_FL_Relax_yhalf_vel_T40}, and \ref{Fig:3D_AxisJet1_FL_Relax_yhalf_vel_T60},} where the arrow direction indicates velocity direction and the arrow scale is proportional to velocity magnitude. The results demonstrate the accurate capture of the time evolution of a light, relativistic jet with high internal energy, and confirm the PCP capability of the proposed flux limiter.
\end{example}

\begin{figure}[!htb]
	\centering
    \begin{subfigure}[t]{.16\textwidth}
		\centering
		\includegraphics[width=1\textwidth]{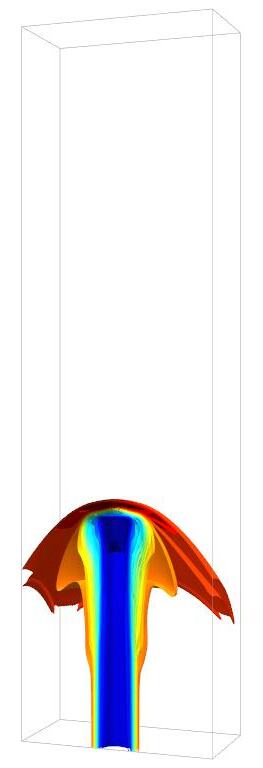}
		\caption{$t=20$.}\label{Fig:3D_AxisJet1_FL_Relax_yhalf_density_T20}
	\end{subfigure}
    \begin{subfigure}[t]{.16\textwidth}
		\centering
		\includegraphics[width=1\textwidth]{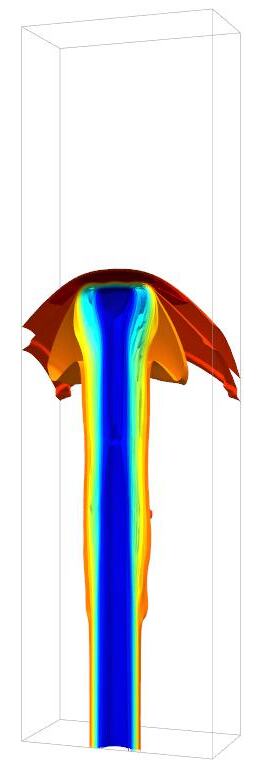}
		\caption{$t=40$.}\label{Fig:3D_AxisJet1_FL_Relax_yhalf_density_T40}
	\end{subfigure}
    \begin{subfigure}[t]{.16\textwidth}
		\centering
		\includegraphics[width=1\textwidth]{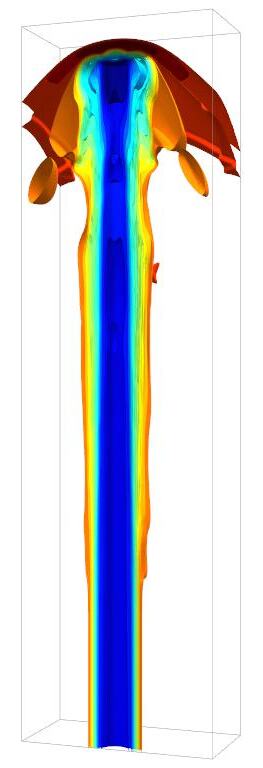}
		\caption{$t=60$.}\label{Fig:3D_AxisJet1_FL_Relax_yhalf_density_T60}
	\end{subfigure}
    \begin{subfigure}[t]{.16\textwidth}
		\centering
		\includegraphics[width=1\textwidth]{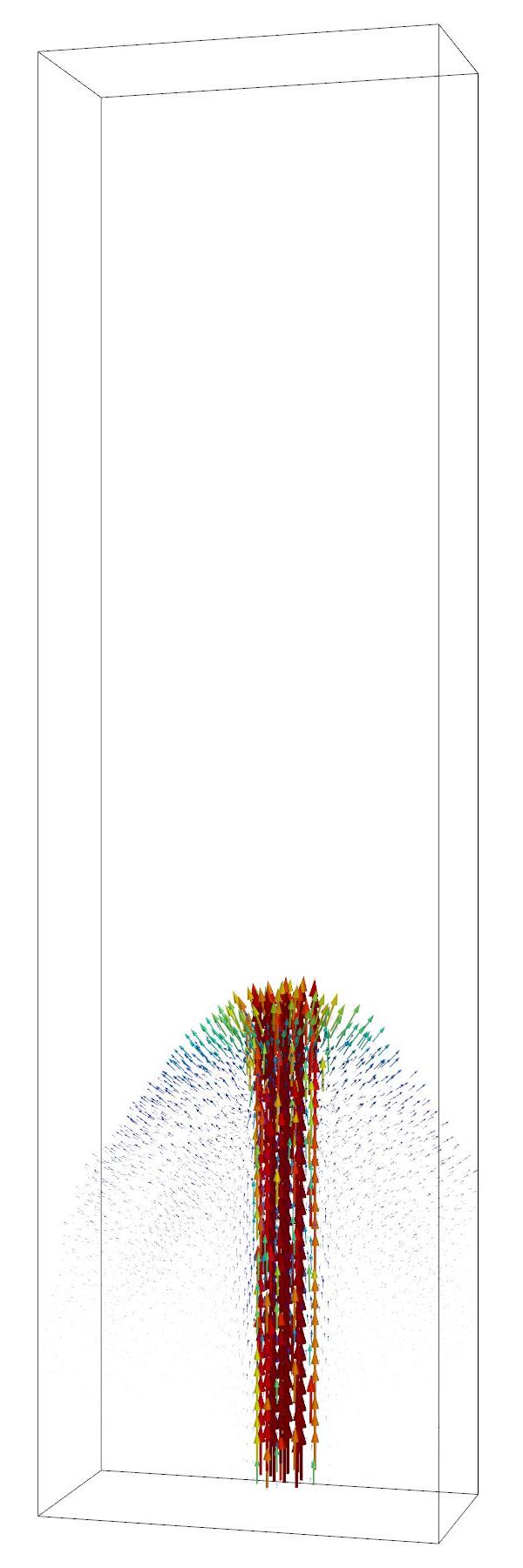}
		\caption{$t=20$.}\label{Fig:3D_AxisJet1_FL_Relax_yhalf_vel_T20}
	\end{subfigure}
    \begin{subfigure}[t]{.16\textwidth}
		\centering
		\includegraphics[width=1\textwidth]{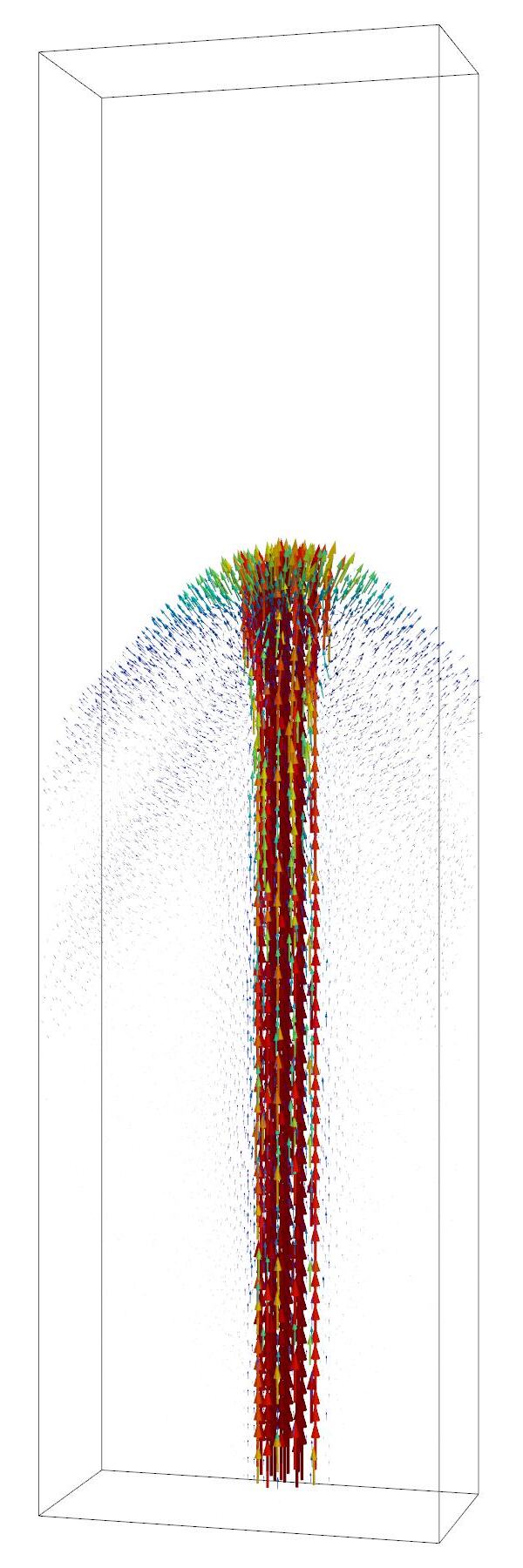}
		\caption{$t=40$.}\label{Fig:3D_AxisJet1_FL_Relax_yhalf_vel_T40}
	\end{subfigure}
    \begin{subfigure}[t]{.16\textwidth}
		\centering
		\includegraphics[width=1\textwidth]{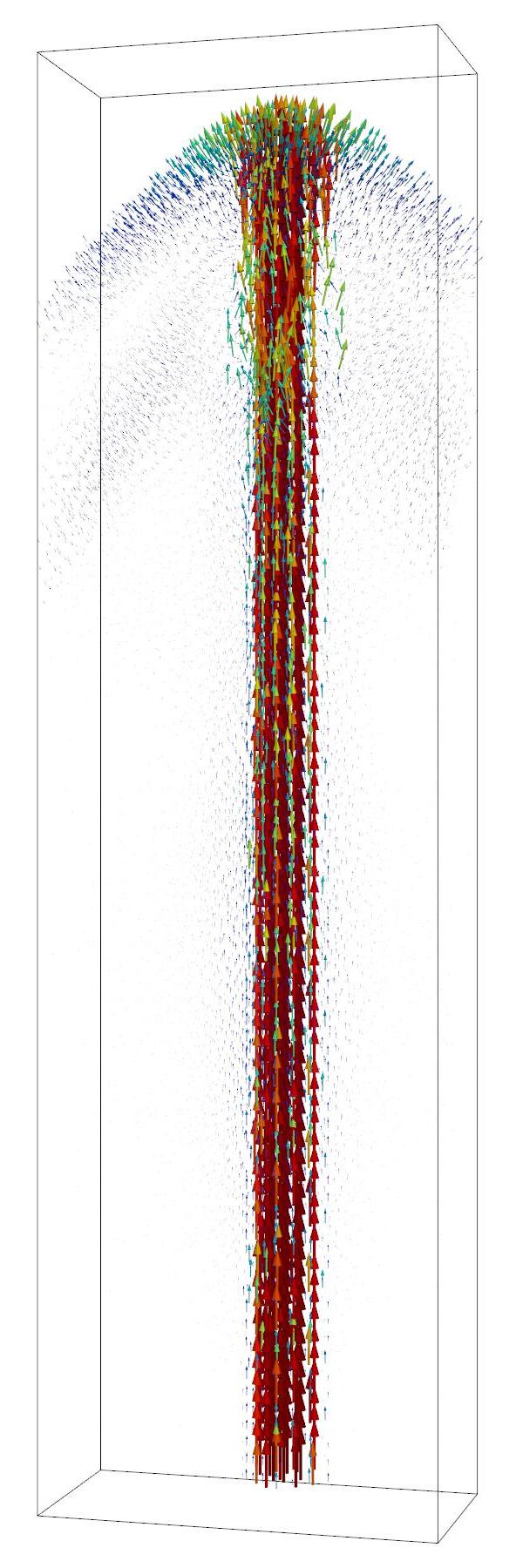}
		\caption{$t=60$.}\label{Fig:3D_AxisJet1_FL_Relax_yhalf_vel_T60}
	\end{subfigure}
	\caption{Example \ref{EX:Axisjet3D}: Numerical results obtained using WENO5 with the relaxed PCP flux limiter \eqref{relax3D}. Left three subfigures: 15 iso-surfaces of $\ln\rho$ equally spaced from -5 to 1; right three subfigures: Slope field visualization of the velocity ${\bm v}$.}\label{Fig:3D_AxisJet1_FL_Relax_yhalf}
\end{figure}


\section{Conclusions}\label{sec:conclusion}
We have developed a robust and efficient physical-constraint-preserving (PCP) flux-limiting framework for high-order schemes, exemplified by finite-difference WENO methods, for special relativistic hydrodynamics. The admissible state set in conservative variables is strictly enforced through the equivalent conditions $D>0$ and $q({\bf{U}})=E-\sqrt{D^2+|{\bm m}|^2}>0$, corresponding to positive rest-mass density, positive pressure, and subluminal velocity.

The central idea is to leverage the geometric quasilinearization (GQL) characterization, which represents the nonlinear constraint $q({\bf{U}})>0$ exactly as the intersection of linear half-space inequalities ${\bf{U}}\cdot{\bm{n}}_*({\bm{v}}_*)>0$ for all auxiliary variables ${\bm{v}}_*\in \mathbb{B}_1({\bf 0})$. This formulation allows us to embed the RHD constraints into a scalar-style Zalesak-type FCT limiter: by projecting the flux-corrected update onto the relevant normals, the constraint enforcement reduces to lower-bound limiting of scalar quantities while the limiter operates directly on conservative variables.

A critical innovation is the {explicit, non-iterative determination} of the limiting parameters via a rational stereographic parameterization of the GQL normal vector. This technique transforms the required worst-case minimization over auxiliary variables into a generalized Rayleigh-quotient formulation, allowing the optimal parameters to be obtained by solving small symmetric eigenvalue problems ($2\times2$ in 1D; $(d+1)\times(d+1)$ in $d$ dimensions). In multidimensions, relaxed variants further reduce the number of eigenvalue evaluations while preserving a rigorous PCP guarantee. Consequently, the proposed parameter estimator avoids expensive iterative searches over ${\bm{v}}_*$ and replaces them with {closed-form} eigenvalue evaluations, resulting in a lightweight limiter suitable for large-scale multidimensional simulations.

Extensive numerical experiments in one to three space dimensions confirm that the resulting schemes robustly enforce physical admissibility, retain fifth-order accuracy for smooth solutions, and sharply resolve strong discontinuities. Future work will focus on less restrictive (yet still explicit) estimators for the $q$-limiting factor, extensions to curvilinear or unstructured meshes and more general equations of state, and applications of the GQL--FCT strategy to other hyperbolic systems with nonlinear invariant domains.


\section*{Data availability}
The data and code that support the findings of this study are available from the corresponding author upon reasonable request.

\section*{Declaration of competing interest}
The authors declare that they have no known competing financial interests or personal relationships that could have appeared to influence the work reported in this paper.


\section*{Acknowledgement}
This work was partially supported by Science Challenge Project (No.~TZ2025007) and the Shenzhen Science and Technology Program (Grant Nos.~JCYJ20250604144300001 and RCJC20221008092757098).


{
\appendix
\section{Complete Pseudocode for the proposed PCP flux Limiter}\label{appendix:code}

In this appendix, we provide complete pseudocode for our physical-constraint-preserving flux-limiting framework. All notation matches the main text.

\subsection{Notation Summary}
\begin{itemize}
    \item ${\bf U}^n$: Conservative state at time $t^n$
    \item $\widehat{\mathcal{F}}^H$: High-order WENO5 numerical flux
    \item $\widehat{\mathcal{F}}^L$: Low-order global Lax-Friedrichs (Rusanov) flux
    \item $\widehat{\mathcal{F}}^A = \widehat{\mathcal{F}}^H - \widehat{\mathcal{F}}^L$: Anti-diffusive flux
    \item $\Delta x, \Delta y, \Delta z, \Delta t$: Mesh size and time step
    \item $\alpha$: Global maximum wave speed
\end{itemize}

\subsection{Main Algorithm: PCP flux Limiter}

\noindent\textbf{Algorithm A.1} \textit{GQL-Based PCP Flux-Limiting Framework (1D/2D/3D)}

\medskip
\noindent\textbf{Input:} Conservative state ${\bf U}^n$, high-order flux $\widehat{\mathcal{F}}^H$, low-order flux $\widehat{\mathcal{F}}^L$, mesh parameters $\Delta x, \Delta y, \Delta z$, time step $\Delta t$, CFL number

\noindent\textbf{Output:} Updated state ${\bf U}^{n+1}$ satisfying ${\bf U}^{n+1} \in \mathcal{G}_\epsilon$

\medskip
\begin{enumerate}
    \item \textbf{Step 1: Compute low-order update (guaranteed to be admissible)}
    
    Compute low-order state ${\bf U}^L$ using first-order LF flux:
    \[
    {\bf U}^L = {\bf U}^n - \frac{\Delta t}{\Delta x} \left( \widehat{\mathcal{F}}^L_{i+1/2,j,k} - \widehat{\mathcal{F}}^L_{i-1/2,j,k} \right) - \frac{\Delta t}{\Delta y} \left( \widehat{\mathcal{G}}^L_{i,j+1/2,k} - \widehat{\mathcal{G}}^L_{i,j-1/2,k} \right) - \frac{\Delta t}{\Delta z} \left( \widehat{\mathcal{H}}^L_{i,j,k+1/2} - \widehat{\mathcal{H}}^L_{i,j,k-1/2} \right)
    \]
    (For 1D, omit $y,z$ terms; for 2D, omit $z$ term.)

    \item \textbf{Step 2: Compute anti-diffusive flux}
    
    $\widehat{\mathcal{F}}^A_{i+1/2} \leftarrow \widehat{\mathcal{F}}^H_{i+1/2} - \widehat{\mathcal{F}}^L_{i+1/2}$
    
    (Similarly for $\widehat{\mathcal{G}}^A, \widehat{\mathcal{H}}^A$ in 2D/3D.)

    \item \textbf{Step 3: Compute density limiter $\theta^D$ (enforces $D \geq \epsilon$)}
    
    \begin{enumerate}
        \item For each cell $i$ (and $j,k$ in 2D/3D):
        \begin{itemize}
            \item $\epsilon_D \leftarrow \min(10^{-13}, D({\bf U}^L_i))$
            \item Compute $P^-_i, Q^-_i, R^-_i$ using Zalesak's notation with ${\bm n}_D = (1, 0, \dots, 0)^\top$
        \end{itemize}
        
        \item For each interface $i+1/2$:
        \begin{itemize}
            \item If $\widehat{\mathcal{F}}^A_{i+1/2} \cdot {\bm n}_D \geq 0$: $\theta^D_{i+1/2} \leftarrow R^-_i$
            \item Else: $\theta^D_{i+1/2} \leftarrow R^-_{i+1}$
        \end{itemize}
    \end{enumerate}
    (Similarly for $y,z$ directions in 2D/3D.)

    \item \textbf{Step 4: Compute q-limiter $\theta^q$ (enforces $q(U) \geq \epsilon$)}
    
    \begin{itemize}
        \item If dimension == 1: $\theta^q \leftarrow \texttt{Compute1DLimiter}({\bf U}^L, \widehat{\mathcal{F}}^A, \Delta x, \Delta t)$
        \item Else: $\theta^q \leftarrow \texttt{ComputeRelaxedLimiter}({\bf U}^L, \widehat{\mathcal{F}}^A, \widehat{\mathcal{G}}^A, \widehat{\mathcal{H}}^A, \Delta x, \Delta y, \Delta z, \Delta t)$
    \end{itemize}

    \item \textbf{Step 5: Apply final limiter and update state}
    
    \begin{itemize}
        \item For each interface: $\theta_{i+1/2} \leftarrow \min(\theta^D_{i+1/2}, \theta^q_{i+1/2})$
        \item Compute flux-corrected update:
        \[
        {\bf U}^{n+1} = {\bf U}^L - \frac{\Delta t}{\Delta x} \left( \theta_{i+1/2} \widehat{\mathcal{F}}^A_{i+1/2} - \theta_{i-1/2} \widehat{\mathcal{F}}^A_{i-1/2} \right)
        \]
    \end{itemize}
\end{enumerate}

\medskip
\noindent\textbf{Return:} ${\bf U}^{n+1}$

\medskip
\subsection{Subroutine: Exact 1D q-Limiter}

\noindent\textbf{Algorithm A.2} \textit{Compute1DLimiter(${\bf U}^L, \widehat{\mathcal{F}}^A, \Delta x, \Delta t$)}

\noindent\textbf{Input:} Low-order state ${\bf U}^L$, anti-diffusive flux $\widehat{\mathcal{F}}^A$, mesh parameters

\noindent\textbf{Output:} q-limiter $\theta^q$

\medskip
\begin{enumerate}
    \item For each cell $i$:
    \begin{enumerate}
        \item \textbf{Step 1: Extract coefficients }
        
        ${\bf U}^L_i = (D^L, m^L, E^L)^\top$
        
        $\epsilon_q \leftarrow \min(10^{-13}, q({\bf U}^L_i))$
        
        $a \leftarrow E^L + D^L - \epsilon_q$, $b \leftarrow 2m^L$, $c \leftarrow E^L - D^L - \epsilon_q$
        
        $d^\pm \leftarrow \widehat{\mathcal{F}}^A_{i\pm1/2} \cdot (1, 0, 1)^\top$, $e^\pm \leftarrow 2\widehat{\mathcal{F}}^A_{i\pm1/2} \cdot (0, 1, 0)^\top$, $f^\pm \leftarrow \widehat{\mathcal{F}}^A_{i\pm1/2} \cdot (-1, 0, 1)^\top$

        \item \textbf{Step 2: Rational stereographic parameterization and Rayleigh quotient}
        
        Define symmetric matrices:
        \[
        B = \begin{pmatrix} a & -b/2 \\ -b/2 & c \end{pmatrix}, \quad {A^\star={\tt sign}(\star)\begin{pmatrix} d^\star & -e^\star/2 \\ -e^\star/2 & f^\star \end{pmatrix},\quad \star\in\{+,-\}}
        \]
        {Here ${\tt sign}(+)=1$ and ${\tt sign}(-)=-1$.}
        
        Compute Cholesky decomposition $B = R^\top R$
        
        ${\widetilde{A}^\star \leftarrow R^{-\top} A^\star R^{-1},\quad \star\in\{+,-\}}$

        \item \textbf{Step 3: Compute maximal eigenvalues}
        
        $\lambda^+_{\text{max}} \leftarrow \texttt{MaxEigenvalue}(\widetilde{A}^+)$
        
        $\lambda^-_{\text{max}} \leftarrow \texttt{MaxEigenvalue}(\widetilde{A}^-)$
        
        $\lambda^*_{\text{max}} \leftarrow \texttt{MaxEigenvalue}(\widetilde{A}^+ + \widetilde{A}^-)$

        \item \textbf{Step 4: Check feasibility of eigenvectors ($|u_*| < 1$)}
        
        For each eigenvalue $\lambda$:
        \begin{itemize}
            \item Compute the corresponding eigenvector $z$
            \item ${\bm w} \leftarrow R^{-1} {\bm z}$
            \item {If the last component of ${\bm w}$ is nonzero, set $u_* \leftarrow w_1/w_2$; otherwise mark this eigenvector as infeasible}
            \item If $|u_*| \geq 1$: Reject this eigenvalue (infeasible)
        \end{itemize}

        \item \textbf{Step 5: Compute cell-wise limiting factor}
        
        {$M_i \leftarrow \max(0, \lambda^+_{\text{max, feasible}}, \lambda^-_{\text{max, feasible}}, \lambda^*_{\text{max, feasible}},\text{ boundary candidates})$}
        
        {$\mathcal{L}_i \leftarrow 1$ if $M_i=0$; otherwise $\mathcal{L}_i\leftarrow \min\left\{1,\dfrac{\Delta x}{\Delta t\,M_i}\right\}$.}
    \end{enumerate}

    \item \textbf{Step 6: Compute interface limiting factors}
    
    For each interface $i+1/2$: $\theta^q_{i+1/2} \leftarrow \min(\mathcal{L}_i, \mathcal{L}_{i+1})$
\end{enumerate}

\medskip
\noindent\textbf{Return:} $\theta^q$

\medskip
\subsection{Subroutine: Relaxed 2D/3D q-Limiter}

\noindent\textbf{Algorithm A.3} \textit{ComputeRelaxedLimiter(${\bf U}^L, \widehat{\mathcal{F}}^A, \widehat{\mathcal{G}}^A, \widehat{\mathcal{H}}^A, \Delta x, \Delta y, \Delta z, \Delta t$)}

\medskip
\noindent\textbf{Input:} Low-order state ${\bf U}^L$, anti-diffusive fluxes in all directions, mesh parameters

\noindent\textbf{Output:} Relaxed q-limiter $\theta^q$

{\noindent\textbf{Note:} \texttt{ComputeDirectionalDenominator} returns the weighted maximal denominator estimate in \eqref{relax2D} or \eqref{relax3D}; the conversion to the actual limiter coefficient is performed only after summing the directional denominator estimates.}

\medskip
\begin{enumerate}
    \item For each cell $i,j,k$:
    
        {\textbf{Step 1: Extract coefficients for each spatial direction}}
        
        Extract $(a, {\bm b}, c)$ from ${\bf U}^L_{ijk}$ (vector ${\bm b}$ for momentum)
        
        Extract $(d^\pm_x, {\bm e}^\pm_x, f^\pm_x)$ from $\widehat{\mathcal{F}}^A_{i\pm1/2,j,k}$
        
        Extract $(d^\pm_y, {\bm e}^\pm_y, f^\pm_y)$ from $\widehat{\mathcal{G}}^A_{i,j\pm1/2,k}$
        
        If dimension == 3:
        \begin{itemize}
            \item Extract $(d^\pm_z, {\bm e}^\pm_z, f^\pm_z)$ from $\widehat{\mathcal{H}}^A_{i,j,k\pm1/2}$
            \item {$M^x_{ijk} \leftarrow \texttt{ComputeDirectionalDenominator}(a, {\bm b}, c, d^\pm_x, {\bm e}^\pm_x, f^\pm_x, \Delta y \Delta z)$}
            \item {$M^y_{ijk} \leftarrow \texttt{ComputeDirectionalDenominator}(a, {\bm b}, c, d^\pm_y, {\bm e}^\pm_y, f^\pm_y, \Delta x \Delta z)$}
            \item {$M^z_{ijk} \leftarrow \texttt{ComputeDirectionalDenominator}(a, {\bm b}, c, d^\pm_z, {\bm e}^\pm_z, f^\pm_z, \Delta x \Delta y)$}
            \item {$M_{ijk} \leftarrow M^x_{ijk} + M^y_{ijk} + M^z_{ijk}$}
            \item {$\mathcal{L}_{ijk}\leftarrow 1$ if $M_{ijk}=0$; otherwise $\mathcal{L}_{ijk}\leftarrow\min\left\{1,\dfrac{\Delta x\Delta y\Delta z}{\Delta t\,M_{ijk}}\right\}$}
        \end{itemize}
        Else:
        \begin{itemize}
            \item {$M^x_{ij} \leftarrow \texttt{ComputeDirectionalDenominator}(a, {\bm b}, c, d^\pm_x, {\bm e}^\pm_x, f^\pm_x, \Delta y)$}
            \item {$M^y_{ij} \leftarrow \texttt{ComputeDirectionalDenominator}(a, {\bm b}, c, d^\pm_y, {\bm e}^\pm_y, f^\pm_y, \Delta x)$}
            \item {$M_{ij} \leftarrow M^x_{ij} + M^y_{ij}$}
            \item {$\mathcal{L}_{ij}\leftarrow 1$ if $M_{ij}=0$; otherwise $\mathcal{L}_{ij}\leftarrow\min\left\{1,\dfrac{\Delta x\Delta y}{\Delta t\,M_{ij}}\right\}$}
        \end{itemize}

    \item \textbf{Step 2: Compute interface limiting factors}
    
    For each interface: $\theta^q \leftarrow \min(\mathcal{L}_{\text{left}}, \mathcal{L}_{\text{right}})$
\end{enumerate}

\medskip
\noindent\textbf{Return:} $\theta^q$

\medskip
\subsection{Auxiliary Function: Maximal Eigenvalue}

\noindent\textbf{Algorithm A.4} \textit{MaxEigenvalue($A$)}

\medskip
\noindent\textbf{Input:} Symmetric matrix $A$ (size 2x2, 3x3, or 4x4)

\noindent\textbf{Output:} Maximal eigenvalue $\lambda_{\text{max}}$

\medskip
\begin{itemize}
    \item Compute eigenvalues using the explicit formulae from the main text
    \item $\lambda_{\text{max}} \leftarrow \text{maximum of all eigenvalues}$
\end{itemize}

\medskip
\noindent\textbf{Return:} $\lambda_{\text{max}}$
}


\bibliographystyle{abbrv}
\bibliography{references}

@article{wang2022affine,
  title={{Affine-invariant WENO weights and operator}},
  author={Wang, Bao-Shan and Don, Wai Sun},
  journal={Applied Numerical Mathematics},
  volume={181},
  pages={630--646},
  year={2022},
  publisher={Elsevier}
}

@article{peng2025oedg,
  title        = {{OEDG}: Oscillation-eliminating discontinuous {Galerkin} method for hyperbolic conservation laws},
  author       = {Peng, Manting and Sun, Zheng and Wu, Kailiang},
  journal      = {Mathematics of Computation},
  volume       = {94},
  number       = {353},
  pages        = {1147--1198},
  year         = {2025},
  doi          = {10.1090/mcom/3998},
  eprint       = {2310.04807},
  archivePrefix = {arXiv},
  primaryClass = {math.NA}
}

@article{borges2008improved,
  title={An improved weighted essentially non-oscillatory scheme for hyperbolic conservation laws},
  author={Borges, Rafael and Carmona, Monique and Costa, Bruno and Don, Wai Sun},
  journal={Journal of computational physics},
  volume={227},
  number={6},
  pages={3191--3211},
  year={2008},
  publisher={Elsevier}
}

@article{wu2017design,
  title={Design of provably physical-constraint-preserving methods for general relativistic hydrodynamics},
  author={Wu, Kailiang},
  journal={Phys. Rev. D},
  volume={95},
  number={10},
  pages={103001},
  year={2017},
  publisher={APS}
}

@article{abgrall2024bound,
  title={{Bound preserving Point-Average-Moment PolynomiAl-interpreted (PAMPA)} scheme: one-dimensional case},
  author={Abgrall, R{\'e}mi and Jiao, Miaosen and Liu, Yongle and Wu, Kailiang},
  journal={Commun. Comput. Phys.},
  volume={39},
  number={5},
  pages={29--58},
  year={2026},
  publisher={Cambridge University Press}
}

@article{abgrall2025bound,
  title={{Bound preserving Point-Average-Moment PolynomiAl-interpreted (PAMPA) on polygonal meshes}},
  author={Abgrall, R{\'e}mi and Liu, Yongle and Boscheri, Walter},
  journal={},
  year={2025},
  note={\href{https://arxiv.org/abs/2502.10069}{https://arxiv.org/abs/2502.10069}}
}

@article{abgrall2025novel,
  title={A novel and simple invariant-domain-preserving framework for {PAMPA scheme: 1D} case},
  author={Abgrall, R{\'e}mi and Jiao, Miaosen and Liu, Yongle and Wu, Kailiang},
  journal={SIAM J. Sci. Comput.},
  volume={47},
  number={6},
  pages={A3536--A3565},
  year={2025},
  publisher={SIAM}
}

@article{cui2025local,
  title={On Local Minimum Entropy Principle of High-Order Schemes for Relativistic {E}uler Equations},
  author={Cui, Shumo and Wu, Kailiang and Xu, Linfeng},
  journal={Math. Comp. },
  volume={in press},
  year={2025},
  note={\href{https://doi.org/10.1090/mcom/4139}{https://doi.org/10.1090/mcom/4139}} 
}

@article{liu1994weighted,
  title={Weighted essentially non-oscillatory schemes},
  author={Liu, Xu-Dong and Osher, Stanley and Chan, Tony},
  journal={J. Comput. Phys.},
  volume={115},
  number={1},
  pages={200--212},
  year={1994},
  publisher={Elsevier}
}

@article{jiang1996efficient,
  title={Efficient implementation of weighted {ENO} schemes},
  author={Jiang, Guang-Shan and Shu, Chi-Wang},
  journal={J. Comput. Phys.},
  volume={126},
  number={1},
  pages={202--228},
  year={1996},
  publisher={Elsevier}
}

@article{zalesak1979fully,
  title={Fully multidimensional flux-corrected transport algorithms for fluids},
  author={Zalesak, Steven T},
  journal={J. Comput. Phys.},
  volume={31},
  number={3},
  pages={335--362},
  year={1979},
  publisher={Elsevier}
}

@book{zalesak2012design,
  title={The design of {Flux-Corrected Transport} ({FCT}) algorithms for structured grids},
  author={Zalesak, Steven T},
  year={2012},
  publisher={Springer}
}

@book{kuzmin2012flux,
  title={Flux-corrected transport: principles, algorithms, and applications},
  author={Kuzmin, Dmitri and L{\"o}hner, Rainald and Turek, Stefan},
  year={2012},
  publisher={Springer Science \& Business Media}
}

@article{wilson1972numerical,
  title={{Numerical study of fluid flow in a Kerr space}},
  author={Wilson, James R},
  journal={Astrophys. J.},
  volume={173},
  pages={431},
  year={1972}
}

@article{may1966hydrodynamic,
  title={Hydrodynamic calculations of general-relativistic collapse},
  author={May, Michael M and White, Richard H},
  journal={Phys. Rev.},
  volume={141},
  number={4},
  pages={1232},
  year={1966},
  publisher={APS}
}

@article{chen2021second,
	title={Second-order accurate {BGK} schemes for the special relativistic hydrodynamics with the {S}ynge equation of state},
	author={Chen, Yaping and Kuang, Yangyu and Tang, Huazhong},
	journal={J. Comput. Phys.},
	volume = {442},
	pages={110438},
	year={2021},
	publisher={Elsevier}
}

@article{radice2012thc,
	title={{THC: a new high-order finite-difference high-resolution shock-capturing code for special-relativistic hydrodynamics}},
	author={Radice, David and Rezzolla, Luciano},
	journal={Astron. Astrophys.},
	volume={547},
	pages={A26},
	year={2012},
	publisher={EDP Sciences}
}

@article{2006Zhang,
  title={{RAM: A relativistic adaptive mesh refinement hydrodynamics code}},
  author={Zhang, Weiqun and MacFadyen, Andrew I},
  journal={Astrophys. J. Suppl. S.},
  volume={164},
  number={1},
  pages={255},
  year={2006},
  publisher={IOP Publishing}
}

@article{hughes2002three,
  title={Three-dimensional hydrodynamic simulations of relativistic extragalactic jets},
  author={Hughes, Philip A and Miller, Mark A and Duncan, G Comer},
  journal={Astrophys. J.},
  volume={572},
  number={2},
  pages={713},
  year={2002},
  publisher={IOP Publishing}
}

@article{zhang2012positivity,
  title={Positivity-preserving high order finite difference {WENO} schemes for compressible {E}uler equations},
  author={Zhang, Xiangxiong and Shu, Chi-Wang},
  journal={J. Comput. Phys.},
  volume={231},
  number={5},
  pages={2245--2258},
  year={2012},
  publisher={Elsevier}
}

@article{jiang2013parametrized,
  title={Parametrized maximum principle preserving limiter for finite difference {WENO} schemes solving convection-dominated diffusion equations},
  author={Jiang, Yi and Xu, Zhengfu},
  journal={SIAM J. Sci. Comput.},
  volume={35},
  number={6},
  pages={A2524--A2553},
  year={2013},
  publisher={SIAM}
}

@article{boris1973flux,
  title={Flux-corrected transport. {I. SHASTA}, a fluid transport algorithm that works},
  author={Boris, Jay P and Book, David L},
  journal={J. Comput. Phys.},
  volume={11},
  number={1},
  pages={38--69},
  year={1973},
  publisher={Elsevier}
}

@article{book1975flux,
  title={Flux-corrected transport {II: G}eneralizations of the method},
  author={Book, David L and Boris, Jay P and Hain, K},
  journal={J. Comput. Phys.},
  volume={18},
  number={3},
  pages={248--283},
  year={1975},
  publisher={Elsevier}
}

@article{boris1976flux,
  title={Flux-corrected transport. {III. Minimal-error FCT} algorithms},
  author={Boris, Jay P and Book, David L},
  journal={J. Comput. Phys.},
  volume={20},
  number={4},
  pages={397--431},
  year={1976},
  publisher={Elsevier}
}

@article{kuzmin2001positive,
  title={Positive finite element schemes based on the flux-corrected transport procedure},
  author={Kuzmin, D},
  journal={Computational Fluid and Solid Mechanics, Elsevier},
  pages={887--888},
  year={2001}
}

@article{kuzmin2004high,
  title={High-resolution {FEM--FCT} schemes for multidimensional conservation laws},
  author={Kuzmin, Dimitri and M{\"o}ller, Matthias and Turek, Stefan},
  journal={Comput. Methods Appl. Mech. Engrg.},
  volume={193},
  number={45-47},
  pages={4915--4946},
  year={2004},
  publisher={Elsevier}
}

@article{kuzmin2020monolithic,
  title={Monolithic convex limiting for continuous finite element discretizations of hyperbolic conservation laws},
  author={Kuzmin, Dmitri},
  journal={Comput. Methods Appl. Mech. Engrg.},
  volume={361},
  pages={112804},
  year={2020},
  publisher={Elsevier}
}

@article{wu2025high,
  title={High Order Numerical Methods Preserving Invariant Domain for Hyperbolic and Related Systems},
  author={Wu, Kailiang and Zhang, Xiangxiong and Shu, Chi-Wang},
  journal={SIAM Rev.},
  volume={in press},
  year={2025},
  note={\href{https://doi.org/10.48550/arXiv.2512.09116}{https://doi.org/10.48550/arXiv.2512.09116}}
}

@article{duan2021entropy,
  title={Entropy stable adaptive moving mesh schemes for {2D and 3D} special relativistic hydrodynamics},
  author={Duan, Junming and Tang, Huazhong},
  journal={J. Comput. Phys.},
  volume={426},
  pages={109949},
  year={2021},
  publisher={Elsevier}
}

@article{cao2025robust,
  title={Robust discontinuous {G}alerkin methods maintaining physical constraints for general relativistic hydrodynamics},
  author={Cao, Huihui and Peng, Manting and Wu, Kailiang},
  journal={J. Comput. Phys.},
  volume={526},
  pages={113770},
  year={2025},
  publisher={Elsevier}
}

@article{cai2024provably,
  title={Provably convergent Newton--Raphson methods for recovering primitive variables with applications to physical-constraint-preserving Hermite WENO schemes for relativistic hydrodynamics},
  author={Cai, Chaoyi and Qiu, Jianxian and Wu, Kailiang},
  journal={J. Comput. Phys.},
  volume={498},
  pages={112669},
  year={2024},
  publisher={Elsevier}
}

@article{suresh1997accurate,
  title={Accurate monotonicity-preserving schemes with {Runge--Kutta} time stepping},
  author={Suresh, Ambady and Huynh, Hung T},
  journal={J. Comput. Phys.},
  volume={136},
  number={1},
  pages={83--99},
  year={1997},
  publisher={Elsevier}
}

@article{he2012adaptive1,
	title={An adaptive moving mesh method for two-dimensional relativistic hydrodynamics},
	author={He, Peng and Tang, Huazhong},
	journal={Commun. Comput. Phys.},
	volume={11},
	number={1},
	pages={114--146},
	year={2012},
	publisher={Cambridge University Press}
}

@article{chen2022physical,
  title={{A physical-constraint-preserving finite volume WENO method for special relativistic hydrodynamics on unstructured meshes}},
  author={Chen, Yaping and Wu, Kailiang},
  journal={J. Comput. Phys.},
  volume={466},
  pages={111398},
  year={2022},
  publisher={Elsevier}
}

@article{xu2024high,
  title={High-Order Accurate Entropy Stable Schemes for Relativistic Hydrodynamics with General Synge-Type Equation of State},
  author={Xu, Linfeng and Ding, Shengrong and Wu, Kailiang},
  journal={J. Sci. Comput.},
  volume={98},
  number={2},
  pages={43},
  year={2024},
  publisher={Springer}
}

@article{wu2023geometric,
  title={Geometric quasilinearization framework for analysis and design of bound-preserving schemes},
  author={Wu, Kailiang and Shu, Chi-Wang},
  journal={SIAM Rev.},
  volume={65},
  number={4},
  pages={1031--1073},
  year={2023},
  publisher={SIAM}
}

@article{WuMEP2021,
	title={Minimum principle on specific entropy and high-order accurate invariant region preserving numerical methods for relativistic hydrodynamics},
	author={Wu, Kailiang},
	journal={SIAM J. Sci. Comput.},
	volume={43},
	number={6},
	pages={B1164--B1197},
	year={2021},
	publisher={SIAM}
}

@article{marquina2019capturing,
  title={Capturing Composite Waves in Non-convex Special Relativistic Hydrodynamics},
  author={Marquina, Antonio and Serna, Susana and Ib{\'a}{\~n}ez, Jos{\'e} M},
  journal={J. Sci. Comput.},
  volume={81},
  number={3},
  pages={2132--2161},
  year={2019},
  publisher={Springer}
}

@article{duan2019high,
  title={High-order accurate entropy stable finite difference schemes for one-and two-dimensional special relativistic hydrodynamics},
  author={Duan, Junming and Tang, Huazhong},
  journal = {Adv. Appl. Math. Mech.},
year = {2020},
volume = {12},
number = {1},
pages = {1--29}
}

@article{guermond2017invariant,
  title={Invariant domains and second-order continuous finite element approximation for scalar conservation equations},
  author={Guermond, Jean-Luc and Popov, Bojan},
  journal={SIAM J. Numer. Anal.},
  volume={55},
  number={6},
  pages={3120--3146},
  year={2017},
  publisher={SIAM}
}

@article{guermond2018second,
  title={Second-order invariant domain preserving approximation of the {Euler} equations using convex limiting},
  author={Guermond, Jean-Luc and Nazarov, Murtazo and Popov, Bojan and Tomas, Ignacio},
  journal={SIAM J. Sci. Comput.},
  volume={40},
  number={5},
  pages={A3211--A3239},
  year={2018},
  publisher={SIAM}
}

@article{guermond2019invariant,
  title={Invariant domain preserving discretization-independent schemes and convex limiting for hyperbolic systems},
  author={Guermond, Jean-Luc and Popov, Bojan and Tomas, Ignacio},
  journal={Comput. Methods Appl. Mech. Eng.},
  volume={347},
  pages={143--175},
  year={2019},
  publisher={Elsevier}
}

@article{Hu2013,
  title={Positivity-preserving method for high-order conservative schemes solving compressible {Euler} equations},
  author={Hu, Xiangyu Y and Adams, Nikolaus A and Shu, Chi-Wang},
  journal={J. Comput. Phys.},
  volume={242},
  pages={169--180},
  year={2013},
  publisher={Elsevier}
}

@article{Liang2014,
  title={Parametrized maximum principle preserving flux limiters for high order schemes solving multi-dimensional scalar hyperbolic conservation laws},
  author={Liang, Chao and Xu, Zhengfu},
  journal={J. Sci. Comput.},
  volume={58},
  number={1},
  pages={41--60},
  year={2014},
  publisher={Springer}
}

@article{Marti2015,
  title={Grid-based methods in relativistic hydrodynamics and magnetohydrodynamics},
  author={Mart{\'\i}, Jos{\'e} Mar{\'\i}a and M{\"u}ller, Ewald},
  journal={Living Rev. Comput. Astrophys.},
  volume={1},
  number={1},
  pages={3},
  year={2015},
  publisher={Springer}
}

@article{QinShu2016,
title = "Bound-preserving discontinuous {Galerkin} methods for relativistic hydrodynamics",
journal = "J. Comput. Phys.",
volume = "315",
pages = "323--347",
year = "2016",
author = "Tong Qin and Chi-Wang Shu and Yang Yang"
}

@book{rezzolla2013relativistic,
  title={Relativistic Hydrodynamics},
  author={Rezzolla, Luciano and Zanotti, Olindo},
  year={2013},
  publisher={Oxford University Press}
}

@article{Wu2017a,
  author =	 {K. Wu},
  title =	 {Positivity-preserving analysis of numerical schemes for ideal magnetohydrodynamics},
    journal = {SIAM J. Numer. Anal.},
	volume = {56},
number = {4},
pages = {2124--2147},
	year = {2018}
 }

@article{WuShu2018,
  author =	 {K. Wu and C.-W. Shu},
  title =	 {A provably positive discontinuous {Galerkin} method for multidimensional ideal magnetohydrodynamics},
  journal = {SIAM J. Sci. Comput.},
  volume={40},
  number={5},
  pages={B1302--B1329},
  year={2018},
  publisher={SIAM}
}

@Article{WuShu2019,
author="Wu, Kailiang and Shu, Chi-Wang",
title="Provably positive high-order schemes for ideal magnetohydrodynamics: analysis on general meshes",
journal="Numer. Math.",
year="2019",
month="Aug",
day="01",
volume="142",
number="4",
pages="995--1047"
}

@article{WuShu2019SISC,
  title={Entropy symmetrization and high-order accurate entropy stable numerical schemes for relativistic {MHD} equations},
  author={Wu, Kailiang and Shu, Chi-Wang},
  journal={SIAM J. Sci. Comput.},
volume = {42},
number = {4},
pages = {A2230--A2261},
year = {2020}
}

@article{WuShu2020NumMath,
  title={Provably Physical-Constraint-Preserving Discontinuous {Galerkin} Methods for Multidimensional Relativistic {MHD} Equations},
  author={Wu, Kailiang and Shu, Chi-Wang},
  journal={Numer. Math.},
  year={2021},
  doi={10.1007/s00211-021-01209-4}
}

@article{WuTang2015,
  title={High-order accurate physical-constraints-preserving finite difference {WENO} schemes for special relativistic hydrodynamics},
  author={Wu, Kailiang and Tang, Huazhong},
  journal={J. Comput. Phys.},
  volume={298},
  pages={539--564},
  year={2015},
  publisher={Elsevier}
}

@article{WuTangM3AS,
  title={Admissible states and physical-constraints-preserving schemes for relativistic magnetohydrodynamic equations},
  author={Wu, Kailiang and Tang, Huazhong},
  journal={Math. Models Methods Appl. Sci.},
  volume={27},
  number={10},
  pages={1871--1928},
  year={2017},
  publisher={World Scientific}
}

@article{WuTang2017ApJS,
  title={Physical-constraint-preserving central discontinuous {Galerkin} methods for special relativistic hydrodynamics with a general equation of state},
  author={Wu, Kailiang and Tang, Huazhong},
  journal={Astrophys. J. Suppl. Ser.},
  volume={228},
  number={1},
  eid={3},
  year={2017},
  publisher={IOP Publishing}
}

@article{Xiong2016,
  title={Parametrized positivity preserving flux limiters for the high order finite difference {WENO} scheme solving compressible {Euler} equations},
  author={Xiong, Tao and Qiu, Jing-Mei and Xu, Zhengfu},
  journal={J. Sci. Comput.},
  volume={67},
  number={3},
  pages={1066--1088},
  year={2016},
  publisher={Springer}
}

@article{Xu2014,
  title={Parametrized maximum principle preserving flux limiters for high order schemes solving hyperbolic conservation laws: one-dimensional scalar problem},
  author={Xu, Zhengfu},
  journal={Math. Comp. },
  volume={83},
  number={289},
  pages={2213--2238},
  year={2014}
}

@article{zhang2010,
  title={On maximum-principle-satisfying high order schemes for scalar conservation laws},
  author={Zhang, Xiangxiong and Shu, Chi-Wang},
  journal={J. Comput. Phys.},
  volume={229},
  number={9},
  pages={3091--3120},
  year={2010},
  publisher={Elsevier}
}

@article{zhang2010b,
  title={On positivity-preserving high order discontinuous {Galerkin} schemes for compressible {Euler} equations on rectangular meshes},
  author={Zhang, Xiangxiong and Shu, Chi-Wang},
  journal={J. Comput. Phys.},
  volume={229},
  number={23},
  pages={8918--8934},
  year={2010},
  publisher={Elsevier}
}

@article{ZHANG2017301,
title = "On positivity-preserving high order discontinuous {Galerkin} schemes for compressible {Navier-Stokes} equations",
journal = "J. Comput. Phys.",
volume = "328",
pages = "301--343",
year = "2017",
author = "Xiangxiong Zhang"
}

@article{radice2011discontinuous,
  title={Discontinuous {Galerkin} methods for general-relativistic hydrodynamics: formulation and application to spherically symmetric spacetimes},
  author={Radice, David and Rezzolla, Luciano},
  journal={Phys. Rev. D},
  volume={84},
  number={2},
  pages={024010},
  year={2011},
  publisher={APS}
}

@article{zhao2013runge,
  title={{Runge--Kutta} discontinuous {Galerkin} methods with {WENO} limiter for the special relativistic hydrodynamics},
  author={Zhao, Jian and Tang, Huazhong},
  journal={J. Comput. Phys.},
  volume={242},
  pages={138--168},
  year={2013},
  publisher={Elsevier}
}

@article{marti2003numerical,
  title={Numerical hydrodynamics in special relativity},
  author={Mart{\'\i}, Jos{\'e} Maria and M{\"u}ller, Ewald},
  journal={Living Rev. Relativ.},
  volume={6},
  number={1},
  pages={7},
  year={2003},
  publisher={Springer}
}

@Misc{amsmath,
  author =	 {{American Mathematical Society}},
  title =	 {User's Guide for the \texttt{amsmath} Package
                  (Version 2.0)},
  url =		 {ftp://ftp.ams.org/pub/tex/doc/amsmath/amsldoc.pdf},
  urldate =	 {2015-07-30},
  year =	 2002}

@Misc{pgfplots,
  author =	 {Christian Feuers\"anger},
  title =	 {Manual for Package \texttt{PGFPLOTS}},
  month =	 may,
  year =	 2015,
  url =		 {http://sourceforge.net/projects/pgfplots}
}
 
\end{document}